\documentclass[11pt, a4paper]{article}
\usepackage[T1]{fontenc}
\usepackage{url}
\usepackage[margin=1in]{geometry} 
\usepackage[noadjust]{cite} 
\usepackage[colorlinks=true,citecolor=red,linkcolor=blue,urlcolor=blue]{hyperref}
\usepackage{amsmath}                              
\usepackage{graphicx}         
\usepackage{booktabs}         
\usepackage{paper}
\usepackage{amsmath,amssymb}
\usepackage{color}
\usepackage{authblk}
\usepackage{changes}
\usepackage{cancel}
\interdisplaylinepenalty=10000

\begin{document}

\title{On the best approximation by finite Gaussian mixtures} 
\author{Yun Ma, Yihong Wu, 
Pengkun Yang\thanks{Y.\ Ma and P.\ Yang are with Department of Statistics and Data Science, Tsinghua University. Y.\ Wu is with Department of Statistics and Data Science, Yale University. 
P.\ Yang is supported in part by the NSFC Grant 12101353, Tsinghua University Initiative Scientific Research Program, and National Key R\&D Program of China 2024YFA1015800. 
Y.\ Wu is supported in part by the NSF Grant CCF-1900507.
This paper was presented in part at the 2023 IEEE International
Symposium on Information Theory \cite{MWY23}.
}}

\maketitle
\begin{abstract}
We consider the problem of approximating a general Gaussian location mixture by finite mixtures. The minimum order of finite mixtures that achieve a prescribed accuracy
is determined within constant factors for the family of mixing distributions with {compact} support or appropriate assumptions on the tail probability including subgaussian and subexponential. While the upper bound is achieved using the technique of local moment matching, the lower bound is established by relating the best approximation error to the low-rank approximation of certain trigonometric moment matrices, followed by a refined spectral analysis of their minimum eigenvalue. In the case of Gaussian mixing distributions, this result corrects a previous lower bound in \cite{WV10}.

\begin{keywords}
{Gaussian mixture, approximation error, channel capacity, moment matrix, orthogonal polynomials.}
\end{keywords}
\end{abstract}

\tableofcontents

\section{Introduction}
\label{sec:intro}
Gaussian mixtures are widely applied in statistical modeling of heterogeneous populations.
Let $\phi$ denote the standard Gaussian density.
For a probability distribution $P$ on the real line, denote
by $f_P$ the marginal density of the Gaussian convolution $P*\phi$, that is
\begin{equation*}
  f_P(x)=\int \phi(x-\theta) dP(\theta).
\end{equation*}
We refer to $P$ and $P*\phi$ as the \textit{mixing distribution} and the \textit{mixture}, respectively. 
 Given a general mixture $P*\phi$, the problem of interest is how to best  
approximate it by a finite mixture $P_m*\phi$, where
the support size of $P_m$ is at most $m$ (\ie, $m$-atomic). 

Let $d(f,g)$ denote a loss function that measures the approximation error of $g$ by $f$. Concrete examples include $L_p$ distances or $f$-divergences \cite{csiszar1975divergence}, the latter of which, including the total variation $\TV(f,g)$, squared Hellinger distance $H^2(f,g)$, 
the Kullback-Leibler (KL) divergence $\KL(f\|g)$, and the $\chi^2$-divergence $\chi^2(f\|g)$, are the focus of the present paper\footnote{If $f$ and $g$ are densities for distribution $P$ and $Q$, respectively, we also write $d(P,Q)=d(f,g)$.} (see Appendix \ref{app:f-div} for a quick review).
The best approximation error of $f_P$ 
by an $m$-component mixture is defined as
\begin{equation}
  \label{eq:def-appr}
\appr(m,P,d)\triangleq\inf_{P_m\in \calP_m}d\left(f_{P_{m}} , f_{P}\right),
\end{equation}
where $\calP_m$ denotes the collection of all distributions that are at most $m$-atomic. Considering the worst instance of this pointwise quantity, we define
\begin{equation}
\appr(m,\calP,d)\triangleq\sup_{P\in\calP} \appr(m,P,d)
\end{equation}
as the worst-case approximation error over a family $\calP$ of mixing distributions by $m$-component mixtures.
It is well-known that the optimization problem \eqref{eq:def-appr} is \emph{nonconvex} (in the location parameters) and is generally hard to solve. This shares the essential difficulty of approximation by neural nets with one hidden layer \cite{Barron1993}.

In information theory, the Gaussian convolution structure arises in the context of Gaussian channels~\cite{Cover2006ElementsOI}, where the input and output distributions correspond to $P$ and $P*\phi$, respectively. 
The channel capacity determines the maximal rate at which information can be reliably transmitted, which, under the second moment constraint, {is defined as
\[
    C\triangleq \max_{P_X:\Expect[X^2]\leq \sigma^2} I(X;X+Z),
\]
where the mutual information between two random variables $X$ and $Y$ is defined as $I(X;Y) \triangleq \Expect \log\frac{P_{XY}}{P_XP_Y}$.
It is well-known that the capacity \( C = \frac{1}{2} \log(1 + \sigma^2) \), and it is achieved by a Gaussian input distribution \( N(0, \sigma^2)\).
While Gaussian inputs achieve the theoretical capacity, practical communication systems typically employ modulation schemes that restrict signals to finite and discrete constellations. This constraint introduces the problem of finite-constellation design, which aims to approach the theoretical capacity limit and improve system performance. Such design challenges frequently occur in diverse application domains, including wireless communication~\cite{Wu2018MIMO}, quantum cryptography~\cite{Kaur2021, Lin19Quantum, Ghorai19Quantum}, and computational neuroscience~\cite{kostal_information_2013}.}

To address this issue, \cite{WV10} studied the Gaussian channel capacity under input cardinality constraints (finite-constellation capacity), in particular, the rate of convergence to the Gaussian channel capacity when the cardinality grows. {Define   
\[
    C_m\triangleq \max_{P_X \in \calP_m: \Expect[X^2]\leq \sigma^2} I(X;X+Z)
\]
as the capacity subject to an input cardinality $m$. 
It turns out that the capacity gap is precisely characterized by $\appr$ under the KL divergence for the unconstrained capacity-achieving input distribution (Gaussian):
\begin{equation}
\appr(m,N(0,\sigma^2),\KL) = C-C_m.
\label{eq:Cm}
\end{equation}
Consequently, the rate of the capacity gap and the corresponding constellation design are direct applications of our main results.
We construct a rate-optimal approximation using a refined moment-matching approach, which serves as a capacity-achieving constellation scheme for Gaussian channel inputs. Notably, we apply a \textit{local} moment matching scheme that apply moment matching to the conditional distribution of Gaussian distribution on a sequence of intervals, which provably improves the global moment matching scheme in \cite{WV10}.
(For further discussions and numerical comparison, see Section~\ref{sec:constellation}.)
Furthermore, to understand the fundamental limit of the capacity gap, we establish a tight lower bound through a novel analytical framework in Section~\ref{sec:LB}. Our analysis reduces the finite mixture approximation problem to a low-rank matrix approximation problem, which is resolved using spectral analysis. Notably, this lower bound corrects the previous result in \cite[Theorem 8]{WV10}.
}

The problem of approximation by finite mixtures also naturally arises in nonparametric statistics and empirical process theory. Classical results show that the complexity of a class of distributions, as manifested by their 
metric entropy, plays a crucial role in determining the rate of convergence of nonparametric density estimation \cite{YangBarron99,Vaart1996WeakCA}. If the distribution family is parametric, its entropy is often determined by the dimension of the parameter space. However, nonparametric families are infinite-dimensional and determining its entropy entails more delicate analysis including finite-dimensional approximation, which, for the Gaussian mixture family, requires approximation by finite mixtures. 
To describe the most economical approximation by finite mixtures, let us define
\begin{equation*}
  \comp (\epsilon,P,d)\triangleq \min\{m\in\naturals: \exists P_m\in \calP_m, d(f_{P_m},f_P) \leq  \epsilon\},
\end{equation*}
\ie, the smallest order of a finite mixture that approximates a given mixture $f_P$ within a prescribed accuracy $\epsilon$. 
For uniform approximation over $\calP$, define
\begin{equation*}
  \comp (\epsilon,\calP,d) \triangleq \sup_{P\in\calP} \comp (\epsilon,P,d),
\end{equation*}
which offers a meaningful \textit{complexity measure} for the class $\{f_P:P\in\calP\}$ and is closely related to more classical complexity notions such as the metric entropy. In fact, most of the existing constructive bounds on the metric or bracketing entropy for general Gaussian mixtures are obtained by first finding a discrete approximation then quantizing the weights and atoms, and the resulting upper bounds increase with $\comp$ \cite{GV07,GV01,Zhang08,Saha19}.
Hence, tightened upper bounds for metric entropy immediately follow.  
See Section~\ref{sec:mle-rate} for a detailed discussion.

Clearly, determining $\comp$ and that of $\appr$ are equivalent by the formula 
\begin{equation*}
\comp(\epsilon,\calP,d)=\inf\sth{m:\appr(m,\calP,d)\leq \epsilon}.
\end{equation*}
In the following, we state our main results in terms of $\comp$, which follow from bounds on $\appr$ in Sections~\ref{sec:UB} and~\ref{sec:LB}.

\subsection{Main results}
Our main results give non-asymptotic rates of $\comp$.
First, we explore the family of compactly supported distributions and provide a tight convergence rate in Theorem~\ref{thm:main-bdd}, which improves upon previous results \cite{GV01, Zhang08}. Next, we turn our attention to distribution families with exponential tail decay (\eg, subgaussian and subexponential families) in Theorem~\ref{thm:main-tail}.
In both regimes, we give  tight lower and upper bounds. 
An overview of the proof strategy is shown at the end of this subsection.

To begin with, consider the
distributions supported on $[-M,M]$ for $M>0$: 
\begin{equation*}
   \Pbdd{M}\triangleq \{P:P[-M,M]=1\}.
\end{equation*}
We consider a loss function $d$ satisfying the following inequality. 
This assumption encompasses the $\TV^2,H^2,\KL,\chi^2$ divergences as special cases by the inequalities between $f$-divergences (see Appendix~\ref{app:f-div}). 

\begin{assumption}
\label{as:f-div}
  There exists absolute constants  $c, c'>0$  such that 
$$c \TV^2\left(P, Q\right) \leq d\left(P \| Q\right) \leq {c' \chi^{2}\left(P \| Q\right)},\quad \forall P,Q.$$
\end{assumption} 

The following optimal rate of complexity level is established:
\begin{theorem}
\label{thm:main-bdd}
  Suppose $M\leq \epsilon^{-c_1}$ for some universal constant $0<c_1<\frac{1}{2}$.
  Then, for 
  $\epsilon\in(0,\frac{1}{2}]$ and $d$ satisfying Assumption~\ref{as:f-div},
\begin{equation}
    \label{eq:main01}
\comp(\epsilon,\Pbdd{M},d)\asymp
\frac{\log\frac{1}{\epsilon}}{\log\left(
1+ \frac{1}{M} \sqrt{\log \frac{1}{\epsilon}}\right)}\vee 1.
\footnote{For $x, y \in \reals$, $x \vee y \triangleq \max\{x, y\}$ and 
$x \wedge y \triangleq \min\{x, y\}$. For two positive sequences ${a_n}$ and ${b_n}$, write $a_n \lesssim b_n$ or $a_n = O(b_n)$ when $a_n \leq Cb_n$ for some absolute constant $C > 0$, $a_n \gtrsim b_n$ or $a_n = \Omega(b_n)$ if $b_n \lesssim a_n$, and $a_n \asymp b_n$ or $a_n =\Theta (b_n)$ if both $b_n \gtrsim a_n$ and $a_n \gtrsim b_n$ hold. 
We write $a_n=O_\alpha(b_n)$ and $a_n \lesssim_\alpha b_n$ if $C$ may depend on parameter $\alpha$.}
\end{equation}
\end{theorem}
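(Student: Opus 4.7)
The plan is to establish matching upper and lower bounds on the approximation error $\appr(m, \Pbdd{M}, d)$ at every $m$ and then invert to get the stated bound on $\comp$. Because $\TV \le H \le \sqrt{\KL/2} \le \sqrt{\chi^2/2}$, it suffices to prove the upper bound for $\chi^2$ and the lower bound for $\TV$ (or $H$).

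\textbf{Upper bound via local moment matching.} For small $M$ (say $M \lesssim 1$) I would let $P_m$ be the $m$-atomic Gauss quadrature matching the first $2m-1$ moments of $P$. Taylor-expanding $\phi(x-\theta)$ in $\theta$ about $0$ and invoking the moment match gives
\begin{equation*}
f_P(x) - f_{P_m}(x) = \sum_{k \ge 2m} \frac{(-1)^k}{k!}\phi^{(k)}(x)\,\bigl(\textstyle\int\theta^k dP - \int\theta^k dP_m\bigr).
\end{equation*}
Using $|\int\theta^k d(P - P_m)| \le 2M^k$ and the Hermite bound $|\phi^{(k)}(x)| \lesssim \sqrt{k!}\,\tilde\phi(x)$ for a slightly broader Gaussian envelope $\tilde\phi$, Stirling yields $\chi^2(f_{P_m}\|f_P) \lesssim (C M/\sqrt{m})^{2m}$. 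For larger $M$ I would apply the \emph{local} version: partition $[-M,M]$ into $\Theta(M)$ intervals of length $\Theta(1)$, restrict $P$ to each piece, and match $L \asymp m/M$ moments locally so that the relevant $M^k$ is replaced by $O(1)^k$ per interval. Summing the per-interval error and asking it to be at most $\epsilon$ reduces to $m \log(1 + \sqrt{m}/M) \gtrsim \log(1/\epsilon)$, which inverts to the upper bound in \eqref{eq:main01}. The hypothesis $M \le \epsilon^{-c_1}$ is what guarantees the requisite $m$ remains in the regime where Stirling-type bounds are effective and where the two regimes (dominant $\sqrt{\log(1/\epsilon)}$ vs.\ dominant $M$) merge smoothly.

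\textbf{Lower bound via trigonometric moment matrices.} Here I would choose a hard instance, e.g.\ $P$ a suitably dithered uniform on $[-M,M]$, and convert the approximation problem to a spectral one via Fourier duality. Since any $m$-atomic $P_m$ has $\hat P_m(\xi) = \sum_{j=1}^m w_j e^{i\xi\theta_j}$ (an $m$-term exponential sum), sampling this at $N \gg 2m$ frequencies produces a vector in the column space of a rank-$m$ generalized Vandermonde matrix. Parseval then turns the $L^2$ discrepancy $\|f_P - f_{P_m}\|_2$ (which dominates $\TV$ through a Gaussian weighting) into a comparison of $\hat P$ against the best rank-$m$ exponential-sum approximant in a weighted norm where the weight is essentially $e^{-\xi^2/2}$. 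A duality argument reduces lower bounding this best rank-$m$ approximation to bounding from below the minimum eigenvalue $\lambda_{\min}$ of a trigonometric moment matrix formed from atoms on a frequency grid of appropriate spacing (driven by $M$) and size (driven by $m$).

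\textbf{Main obstacle.} The decisive step is the refined estimate of $\lambda_{\min}$ for the trigonometric moment matrix; crude bounds from Gershgorin or standard Ingham-type inequalities give the wrong logarithmic dependence and underlie the incorrect lower bound of \cite{WV10} the paper sets out to fix. I would attempt this via orthogonal polynomials on the unit circle (Szeg\H{o}-type asymptotics), carefully calibrating the grid spacing so that the condition number of the relevant Vandermonde system matches the Hermite growth $\sqrt{k!}$ that drove the upper bound; only then will the two bounds agree on the transcendental ratio in \eqref{eq:main01}. A secondary, but still delicate, task is to patch the argument uniformly across the range $1 \lesssim M \lesssim \epsilon^{-c_1}$ so that the $\log(1 + \frac{1}{M}\sqrt{\log(1/\epsilon)})$ denominator emerges cleanly in both regimes.
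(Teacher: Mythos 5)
Your high-level strategy matches the paper's: upper bound via local moment matching reduced to $\chi^2$, lower bound via low-rank approximation of trigonometric moment matrices reduced to $\TV$. But there is a concrete gap in the upper bound, and the lower bound remains a sketch at exactly the point the paper identifies as the main technical content.

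\textbf{Gap in the upper bound.} You propose partitioning $[-M,M]$ into $\Theta(M)$ intervals of length $\Theta(1)$ and matching $L\asymp m/M$ moments per interval. After applying the moment-comparison bound on each interval (local half-width $\Theta(1)$, $J\asymp m/M$ moments matched), the per-interval error scales as $\exp\bigl(-\Theta\bigl(\tfrac{m}{M}\log\tfrac{m}{M}\bigr)\bigr)$, not the $\exp(-\Theta(m^2/M^2))$ that the theorem requires in the regime $3\sqrt{\kappa}M\lesssim m\lesssim M^2$. To check: at the critical scale $m\asymp M\sqrt{\log\tfrac1\epsilon}$, your scheme gives error $\exp\bigl(-\Theta\bigl(\sqrt{\log\tfrac1\epsilon}\,\log\log\tfrac1\epsilon\bigr)\bigr)\gg\epsilon$, so you do not achieve accuracy $\epsilon$ there, and inverting your bound yields $m\asymp M\log\tfrac1\epsilon/\log\log\tfrac1\epsilon$ rather than the correct $M\sqrt{\log\tfrac1\epsilon}$. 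The conclusion ``reduces to $m\log(1+\sqrt m/M)\gtrsim\log\tfrac1\epsilon$'' that you write down is the right formula, but it does not follow from the constant-length partition. The fix is to let the interval length scale as $\Theta(m/M)$ (equivalently $K\asymp M^2/m$ intervals), which the paper does explicitly in Theorem~\ref{thm:ub-bdd}: then each of the $K$ intervals receives $\tilde m\asymp m^2/M^2$ atoms, the local moment-difference bound lives at the threshold $\tilde m\asymp\kappa(M/K)^2$, and the per-interval error is $\exp(-\Theta(m^2/M^2))$. You should also split the cases by the comparison $m\gtrless\kappa M^2$ (not $M\gtrless 1$): for $m\geq\kappa M^2$ the global Gauss quadrature already gives the better exponent $m\log(m/M^2)$, and the partition is only needed in the intermediate regime.

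\textbf{Lower bound.} Your reduction (test against $e^{i\omega x}$ on a $\delta$-grid, exploit that $m$-atomic mixtures produce rank-$m$ Toeplitz matrices, then lower bound $\lambda_{\min}$) is precisely the paper's Proposition~\ref{prop:TV-eigen}, and your pointer to Szeg\H{o} asymptotics for orthogonal polynomials on the unit circle is the right instinct. But the actual proof requires two distinct constructions that you do not supply: for $M\lesssim m\lesssim M^2$ one uses $P=\Unif[-M,M]$ with $\delta=\pi/M$ so that $\bfT_m(\delta X)$ is the identity; for $m\gtrsim M^2$ the wrapped density degenerates and one must instead take a distribution supported on an arc of the circle, relate its OPUC to Chebyshev polynomials via the Szeg\H{o} map (Lemmas~\ref{lem:arc-coef}--\ref{lem:arc}), and control $\|\bfR_m\|_F$ via the recurrence. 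Your paragraph labeled ``Main obstacle'' correctly names this as the hard step but does not resolve it; as written the proposal does not yet constitute a proof of the lower bound matching~\eqref{eq:main01}.
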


We provide some interpretations of Theorem~\ref{thm:main-bdd}. 
By definition, $\comp$ increases with $M$ and decreases with $\epsilon$.
In fact, \eqref{eq:main01} captures an ``elbow-effect'' depending on the relationship between $M$ and $\epsilon$. 
If the support of mixing distributions is not too wide, \eg, $M\lesssim \pth{\log \frac{1}{\epsilon}}^{\frac{1}{2}-\delta}$ for a constant $\delta>0$, 
the complexity $\comp$ has a slower growth with respect to $\epsilon$ as $\frac{\log\epsilon^{-1}}{\log\log\epsilon^{-1}}$.
When $\sqrt{\log \frac{1}{\epsilon}}\lesssim M\lesssim \epsilon^{-c_1}$,
the finite mixture needs to cover a wider range, and consequently, $\comp$ grows faster as $M\sqrt{\log \frac{1}{\epsilon}}$. 
Finally, when 
$M\gtrsim \epsilon^{-c_1}$ for any $c_1\in(0,\frac{1}{2})$, our Theorem~\ref{thm:lb-unif} shows that we need at least $\Omega(M)$
 components to encompass an extensive range.

Next, we turn to the families of distributions supported on the whole real line. 
The following notions are introduced to characterize the tail conditions.
Given a function $\psi:[0,\infty)\mapsto [0,\infty)$
that is non-decreasing and
satisfies that $\psi(0)=0$ and 
{$\lim_{x\to\infty} \psi(x)=\infty$}, define
\begin{equation*}
    \|X\|_{\psi}\triangleq\inf \sth{t>0: \mathbb{E} \psi\pth{\frac{|X|}{t}} \leq 1}.
\end{equation*} 
This concept is commonly referred to as the $\psi$-Orlicz norm, originally introduced in  \cite{Orlicz1961}. It also implies the following tail bound \cite{Kuchibhotla22}:
\begin{equation}
    \label{eq:orlicz_tail}
    \mathbb{P}[|X| \geq t] \leq \frac{2}{\psi\left(t /\|X\|_{\psi}\right)+1},\quad \forall t \geq 0.
\end{equation} 
We focus on  $\psi(x)=e^{x^\alpha}-1$ for  $\alpha>0$. Define 
\begin{equation}
\calP_\alpha(\beta)\triangleq\sth{P : X\sim P, \|X\|_\psi \leq \beta,  \psi(x)=e^{x^\alpha}-1},
    \label{eq:Pab}
\end{equation}
    which is also referred to as sub-Weibull family (see  \cite{SubWeibull2020,SubWeibull2022,Kuchibhotla22} for various equivalent definitions). 
    For instance, the special cases of $\alpha=2$ and $\alpha=1$
    correspond to the
    $\beta^2$-subgaussian and $\beta$-subexponential families, respectively \cite{HDP}.  
    The tail probability bound \eqref{eq:orlicz_tail} becomes \cite[Eq. (2.3)]{Kuchibhotla22}
    \begin{equation}
        \label{eq:subweibull-tail}
        P[|X| \geq t]\leq 2\exp\qth{-\pth{\frac{t}{\beta}}^\alpha},\quad P\in \calP_\alpha(\beta).
    \end{equation}
We assume that $\psi$ (and thus $\alpha$ therein) is fixed, while  $\beta$ could vary with $m$.
\begin{theorem}
    \label{thm:main-tail}
    Suppose that $\beta \leq \epsilon^{-c_1}$ for some universal constant $0<c_1<\frac{1}{2}$. Then, for $\epsilon\in(0,\frac{1}{2}]$ and {$d$ satisfying Assumption~\ref{as:f-div}},
    \begin{equation*}
        \beta\pth{\log\frac{1}{\epsilon}}^{\frac{2+\alpha}{2\alpha}}\lesssim_\alpha\comp(\epsilon,\calP_\alpha(\beta),d)\lesssim_\alpha \frac{\log\frac{1}{\epsilon}}{\log\pth{1+\frac{1}{\beta}\pth{\log\frac{1}{\epsilon}}^{\frac{\alpha-2}{2\alpha}}}} \vee 1.
    \end{equation*}
    In particular,
    when $\pth{\log\frac{1}{\epsilon}}^{\frac{\alpha-2}{2\alpha}}\lesssim \beta \leq \epsilon^{-c_1}$, we have 
    \begin{equation*}
        \comp(\epsilon,\calP_\alpha(\beta),d) \asymp_\alpha \beta\pth{\log\frac{1}{\epsilon}}^{\frac{2+\alpha}{2\alpha}}.
    \end{equation*}
\end{theorem}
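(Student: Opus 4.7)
The plan is to reduce both directions of Theorem~\ref{thm:main-tail} to the compactly-supported Theorem~\ref{thm:main-bdd} by identifying the effective support $R_\epsilon \triangleq \beta(\log(1/\epsilon))^{1/\alpha}$. By the sub-Weibull tail bound \eqref{eq:subweibull-tail}, every $P\in\calP_\alpha(\beta)$ puts mass at most $2\epsilon$ outside $[-R_\epsilon, R_\epsilon]$, and substituting $M=R_\epsilon$ into \eqref{eq:main01} yields $M^{-1}\sqrt{\log(1/\epsilon)}=\beta^{-1}(\log(1/\epsilon))^{(\alpha-2)/(2\alpha)}$, precisely the quantity in the claimed upper bound. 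Concretely, for any $P\in\calP_\alpha(\beta)$ I would pick $R=c_\alpha R_\epsilon$ large enough that $p\triangleq P([-R,R]^c)\leq \epsilon^2$, split $P=(1-p)P_0+pP_1$ with $P_0$ the normalized restriction to $[-R,R]$, invoke Theorem~\ref{thm:main-bdd} on $\Pbdd{R}$ to produce an $m$-atomic $Q_m$ with $d(f_{P_0},f_{Q_m})\leq \epsilon/4$, and take $\tilde{Q}=(1-p)Q_m+p\delta_0$ as the $(m+1)$-atomic approximant. The interior contribution to $d(f_P,f_{\tilde{Q}})$ is controlled by Theorem~\ref{thm:main-bdd}; for $d\in\{\TV,H\}$ the truncation contributes $O(p)$ directly, while for $d\in\{\KL,\chi^2\}$ the $p\delta_0$ summand ensures $f_{\tilde{Q}}\geq p\phi$ pointwise, keeping $\log(f_P/f_{\tilde{Q}})$ polynomial in $1/\epsilon$ on the bulk of $P$ and absorbing the truncation into the $\epsilon$-budget. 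The hypothesis $M\leq\epsilon^{-c_1}$ of Theorem~\ref{thm:main-bdd} is inherited because $R\leq\beta\cdot\mathrm{polylog}(1/\epsilon)\leq\epsilon^{-c_1'}$ for a slightly enlarged $c_1'$.

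\textbf{Lower bound.} To match this rate I would exhibit a specific $P^*\in\calP_\alpha(\beta)$ such that $\appr(m,P^*,d)\geq\epsilon$ whenever $m\leq c_\alpha \beta(\log(1/\epsilon))^{(\alpha+2)/(2\alpha)}$. The natural candidate is the generalized Gaussian with density proportional to $\exp(-(|x|/\beta)^\alpha)$; a direct integration gives Orlicz norm $\beta(1-2^{-\alpha})^{-1/\alpha}=C_\alpha\beta$, so $P^*\in\calP_\alpha(\beta)$ after the harmless $\alpha$-dependent rescaling absorbed by $\lesssim_\alpha$. Its effective support is exactly $[-R_\epsilon,R_\epsilon]$ and its density at the edges is of order $\epsilon/\beta$. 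I would then invoke the spectral lower bound machinery from the proof of Theorem~\ref{thm:main-bdd}---which relates $\appr$ to the minimum eigenvalue of a trigonometric moment matrix weighted by a distribution spread over the relevant interval---to conclude, in the regime $R_\epsilon\gtrsim\sqrt{\log(1/\epsilon)}$ (equivalently $\beta\gtrsim(\log(1/\epsilon))^{(\alpha-2)/(2\alpha)}$), that $\appr(m,P^*,d)\gtrsim\epsilon$ as soon as $m\leq c R_\epsilon\sqrt{\log(1/\epsilon)}=c\beta(\log(1/\epsilon))^{(\alpha+2)/(2\alpha)}$. Passage from the weakest divergence to $H$, $\KL$, $\chi^2$ follows from the standard chain of $f$-divergence inequalities, at worst affecting absolute constants inside $\log(1/\epsilon)$.

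\textbf{Main obstacle.} The upper bound is mostly bookkeeping once Theorem~\ref{thm:main-bdd} is in hand, the only subtlety being the $\KL/\chi^2$ truncation that is handled by the $p\delta_0$ trick above. The genuine difficulty lies in the lower bound: any compactly-supported $P$ with $\|X\|_\psi\leq\beta$ is forced onto an interval of length $\asymp\beta$, so directly quoting the $\Pbdd{\beta}$ lower bound only produces the too-weak rate $\beta\sqrt{\log(1/\epsilon)}$, short of the target by a factor $(\log(1/\epsilon))^{1/\alpha}$. Closing that gap requires a genuinely unbounded hard instance that saturates the sub-Weibull tail, and re-running the trigonometric moment matrix minimum-eigenvalue argument with the weighting induced by $P^*$ in place of the uniform weighting on $[-M,M]$. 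Verifying that this weighted eigenvalue estimate still scales like $R_\epsilon\sqrt{\log(1/\epsilon)}$, and that no factor is lost when transferring from an $L^2$-type estimate to the $f$-divergences of interest, is where I expect the core of the technical work to reside.
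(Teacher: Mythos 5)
Your overall strategy mirrors the paper's: truncate the mixing distribution to an effective support to invoke the compactly-supported result for the upper bound, and use the generalized Gaussian $P_{\alpha,\beta}$ with density $\propto\exp(-|x/\beta|^\alpha)$ as the hard instance for the lower bound. That much is right. But there are two concrete gaps.

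\textbf{Upper bound.} Your $p\delta_0$ trick is aimed at the wrong direction of the divergence. The paper defines $\appr(m,P,d)=\inf_{P_m}d(f_{P_m},f_P)$, so for $\chi^2$ the relevant quantity is $\chi^2(f_{\tilde Q}\|f_P)=\int(f_{\tilde Q}-f_P)^2/f_P$: it is $f_P$ in the denominator, and inflating $f_{\tilde Q}$ by a $p\phi$ summand does nothing to control it. Your discussion of ``keeping $\log(f_P/f_{\tilde Q})$ polynomial'' is bounding $\KL(f_P\|f_{\tilde Q})$, the reversed divergence. Even setting the direction issue aside, the $p\delta_0$ decomposition generates a cross term $\int\phi^2/f_{P_0}$ that scales like $e^{R^2}$ with $R\asymp\beta(\log\frac{1}{\epsilon})^{1/\alpha}$; when $\beta$ is allowed to grow as $\epsilon^{-c_1}$, this is doubly exponential and cannot be absorbed by choosing $p$ small. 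The paper avoids all of this via Lemma~\ref{lem:chi2-truncate}, which bounds $\chi^2(f_Q\|f_P)\le\frac{2}{P(A)}(\chi^2(f_Q\|f_{P_A})+P(A^c))$ for $A=[-t,t]$ using only the pointwise domination $f_P\ge P(A)f_{P_A}$ and $f_P\ge P(A^c)f_{P_{A^c}}$ — no extra atom is added, and no $e^{R^2}$ factor appears. The optimization over $t$ (Theorem~\ref{thm:ub--subW}) then gives the claimed rate.

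\textbf{Lower bound.} You correctly diagnose that a bounded hard instance in $\calP_\alpha(\beta)$ would be forced onto an interval of width $\asymp\beta$ and undershoot, and you pick the right unbounded test distribution $P_{\alpha,\beta}$. But ``re-running the trigonometric moment matrix argument from Theorem~\ref{thm:main-bdd} with the weighting induced by $P^\star$'' points at the orthogonal-expansion / Szeg\H{o}-arc machinery (Lemma~\ref{lem:arc-coef}, Lemma~\ref{lem:arc}), which applies to arcs and requires explicit control of OPUC coefficients; for a density with full support this is the wrong tool, and indeed you correctly flag this as the step you cannot verify. The paper's Theorem~\ref{thm:lb-tail-1} sidesteps it entirely via the \emph{wrapped density} bound \eqref{eq:eigen-lb-0}: one picks a frequency $\delta$, wraps $\delta X\bmod 2\pi$, and lower bounds $\lambda_{\min}(\bfT_m(\delta X))$ by $2\pi$ times the minimum of the wrapped density, which for $P_{\alpha,\beta}$ is just $\frac{1}{\delta}f_{\alpha,\beta}(2\pi/\delta)$. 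Combining with Proposition~\ref{prop:TV-eigen} and optimizing $\delta\asymp m^{-2/(2+\alpha)}\beta^{-\alpha/(2+\alpha)}$ gives $\appr\ge\exp(-\Theta_\alpha((m/\beta)^{2\alpha/(2+\alpha)}))$ up to polynomial prefactors, which inverts to the target $m\gtrsim_\alpha\beta(\log\frac{1}{\epsilon})^{(2+\alpha)/(2\alpha)}$. This wrapped-density idea is the missing mechanism that closes the gap you identify as the ``main obstacle,'' and without it the lower bound is not established.
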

Theorem~\ref{thm:main-tail} shows that the complexity level grows in a logarithm rate, which equivalently implies an exponential convergence rate of the approximation error $\appr$.
The tight result is applicable when $\alpha\in(0,2]$ and $\beta$ is a fixed constant, such as in the case of the subgaussian and subexponential family. 
Specifically, for the $\sigma^2$-subgaussian family $\calP_2(\sigma)$ 
with $c_0\leq\sigma\leq \epsilon^{-c_1}$,
we have
    \begin{equation}
    \label{eq:comp_subG}
    \comp(\epsilon,\calP_2(\sigma),d)\asymp {\sigma\log\frac{1}{\epsilon}}.
    \end{equation}
{In these theorems, the conditions $M, \beta\leq \epsilon^{-c_1}$ are required only for the lower bound of $\comp$, and the upper bound continues to hold without this condition. For explicit details, see Theorems~\ref{thm:ub-bdd} and~\ref{thm:lb-unif} for the corresponding upper and lower bounds. }

One way to reconcile Theorems~\ref{thm:main-bdd} and~\ref{thm:main-tail}  is to notice that each distribution $P\in\calP_\alpha(\beta)$ is effectively supported 
(except for a total mass that is polynomially small in $\epsilon$) on the interval 
$[-C\beta \pth{\log \frac{1}{\epsilon}}^{\frac{1}{\alpha}},
C\beta \pth{\log \frac{1}{\epsilon}}^{\frac{1}{\alpha}}]$ 
for some large constant $C$, so that the approximation complexity of the class $\calP_\alpha(\beta)$ coincides with that of $\Pbdd{M}$ with 
$M \asymp \beta \pth{\log \frac{1}{\epsilon}}^{\frac{1}{\alpha}}$.
As will be shown next,  our upper bound essentially
pursues this idea, while the lower bound is based on a different approach.  

We now briefly discuss the proof strategies for the main results. We prove the upper bound under the $\chi^2$-divergence and the lower bound under the $\TV$ distance
{as $d$ satisfies Assumption~\ref{as:f-div}}. 
For the upper bound, we extend
the local moment matching argument in previous work \cite{Zhang08}, which constructs a discrete approximation by matching the moments for the mixing distribution conditioned on each subinterval in a partition of the effective support of the mixture. This approach can be further generalized to distribution families characterized by various tail conditions. The matching lower bound is the major contribution of this paper, where we relate the best approximation error to the \textit{low-rank approximation} of  \textit{trigonometric moment matrices}, and then conduct a refined analysis based on the classical spectrum theory. The application of orthogonal polynomials also plays a crucial role in our analysis. 

\begin{remark}[Gaussian location mixture with general variance] 
\label{remark:var}
Consider the problem of approximating $P*\phi_\sigma$ by an $m$-component mixture $P_m*\phi_\sigma$, where  $\phi_\sigma$ denotes the density of $N(0,\sigma^2)$. This problem appears in heteroscedastic settings of nonparametric density estimation \cite{Jiang20,SGS21}.
By the scale invariance of the $f$-divergences, we have $d(P_\sigma*\phi,Q_\sigma*\phi)=d(P*\phi_\sigma,Q*\phi_\sigma)$ for 
{$d\in\{\TV,H^2,\KL,\chi^2\}$}, where $P_\sigma$ denotes the distribution of $X/\sigma$ for $X\sim P$. Consequently, the minimum number of components is equal to $\comp (\epsilon,P_\sigma,d)$. 
\end{remark}

{

\begin{remark}
The loss function \( d \) may not be symmetric, as in the case with \( \KL \) and \( \chi^2 \) divergences. In general, the best approximation with respect to  \( d(f_{P_m}, f_P) \) and  \( d(f_P, f_{P_m}) \) are not equivalent. For instance, if $P=N(0,\sigma^2)$ with $\sigma>1$, then \( \chi^2(f_P \| f_{P_m}) = \infty \) for any $m$ and any $m$-atomic \( P_m \), but \( \chi^2( f_{P_m}\|f_P) \) can be made exponentially small.

In certain special cases, such as when \( P \) is compactly supported or sub-Gaussian, the rate of \( \KL(f_P \| f_{P_m}) \) can be derived using the divergence comparison inequalities for Gaussian mixtures~\cite{JPW23}. These inequalities provide upper bounds on the \( \KL \) divergence between Gaussian mixtures in terms of the symmetric squared Hellinger distance (\( H^2 \)), where our main results can be applied.  
\end{remark}  

}

\subsection{Comparison with previous results}
\label{sec:comparison}
Below we give an overview of previous results.
The upper bound for the compact support case is 
discussed in \cite{GV01,GV07,Zhang08}. Among them, the strongest result
\cite[Lemma 1]{Zhang08} gives an upper bound of
{$\comp(\epsilon,\Pbdd{M},\TV)\lesssim \pth{M\sqrt{\log{\frac{1}{\epsilon}}}}\vee \log{\frac{1}{\epsilon}}$. }
Theorem~\ref{thm:main-bdd} strengthens this result by bounding the $\chi^2$-divergence and establishing the optimal rate. 
Correspondingly, for general $\sigma$, it follows from Remark~\ref{remark:var} that
 the {$\pth{\frac{M}{\sigma}\sqrt{\log{\frac{1}{\epsilon}}}}\vee \log{\frac{1}{\epsilon}}$} upper bound from \cite[Lemma 3]{Jiang20} can be improved to {$\frac{\log{\frac{1}{\epsilon}}}{\log\left(1+ \frac{\sigma}{M} \sqrt{\log{\frac{1}{\epsilon}}}\right)}\vee 1.$}
 In addition, for Hellinger distance, \cite{pw20} conjectures that $\comp(\epsilon,\Pbdd{1},H)\asymp \frac{\log \frac{1}{\epsilon}}{\log \log \frac{1}{\epsilon}}$, which is proved by our Theorem~\ref{thm:main-bdd}.
 
 For the subgaussian case, \cite[Lemma 7]{pw20} gives a $\log{\frac{1}{\epsilon}}$ upper bound 
for 1--subgaussian family, which is further extended to sub-Weibull families by Theorem~\ref{thm:main-tail}.
The specific problem of approximating $P=N(0,\sigma^2)$ is studied in \cite{Wu2010FunctionalPO,WV10} in the context of the finite-constellation capacity.
While quantized Gaussian only achieves an error that is polynomial in $m$, an exponential upper bound is 
$\appr(m,N(0,\sigma^2),\KL) \lesssim \sigma^2 (\frac{\sigma^2}{1+\sigma^2})^{2m}$ is shown in \cite[Theorem 8]{WV10} using  the Gauss quadrature. As a corollary of our main result, \eqref{eq:comp_subG} generalizes this result to subgaussian family with an improved exponent for large $\sigma$.

Compared with these constructive upper bounds, the lower bound is far less understood. For the Gaussian distributions, 
\cite[Eq.\ 66]{WV10} claims that 
$\appr(m,N(0,\sigma^2),\KL) \geq (\frac{\sigma^2}{2+\sigma^2}+o(1))^{2m}$;
however, the sketched proof turns out to be flawed, which results in a wrong dependency of the exponent on large $\sigma$. This is now corrected in Theorem~\ref{thm:main-tail} (see also Theorems \ref{thm:ub--subW} and \ref{thm:lb-tail-1}), which shows that the exponential convergence is indeed tight but the optimal exponent (for fixed $\sigma$) is $\Theta(\frac{1}{\sigma})$ as opposed to 
$\Theta(\frac{1}{\sigma^2})$.
The exact optimal exponent, as a function of $\sigma$, however, remains open.

\subsection{Related work}
The problem of approximation 
by location mixtures is first addressed by the celebrated Tauberian theorem of Wiener \cite{Wiener32}, which gives a general characterization of whether the translation family 
of a given function is dense in $L_1(\reals^d)$ or $L_2(\reals^d)$ in terms of its Fourier transform. 
Convergence rates have been studied over the past few decades,
with a wide range of applications in 
approximation theory, machine learning, and information theory \cite{Barron1993,ferreira1997neural,cuesta2002shape,WV10,Wu2010FunctionalPO}.
For example, for the location and scale $m$-mixture class of sigmoidal functions, Barron \cite{Barron1993} obtained dimension-free
convergence rate for approximating functions whose gradient has integrable Fourier transform, a fundamental result in the theory of neural networks.
For Gaussian models, Wu and Verd\'u \cite{WV10} linked this problem to the Gaussian channel capacity under input cardinality constraint (cf.~\eqref{eq:Cm}). 
More recently, \cite{HDNguyen2019,TTNguyen2020,TTNguyen2021} showed the consistent approximation over various families with general location-scale mixtures.
{Another related problem is the Gaussian mixture reduction, which requires approximating a high-order Gaussian mixture with a low-order one. This problem broadly arises in applications including belief propagation  and Bayesian filtering. Although there are many numerical algorithms by means of clustering, optimization, or the greedy algorithm, convergence rates and  optimal approximators are still left to be discovered. See \cite{Zhang2020GaussianMR,Sajedi2023ANP,O23GMR} for some recent works and \cite{lookgmr11} for a review.}

In the statistics literature, understanding the complexity of a distribution class plays an important role in nonparametric density and functional estimation \cite{Vaart1996WeakCA}. 
Information-theoretic risk bounds are obtained on the basis of metric entropy for a variety of loss functions (see \cite[Chapter 32]{PW-it} for an overview). 
In addition, metric entropy of the Gaussian mixture family is crucial in the statistical analysis of 
 the sieve and  nonparametric maximum likelihood estimator (MLE) in  mixture models as well as posterior concentration \cite{SW94,WS95,Genovese00, GV01,GV07,Zhang08,Saha19,Jiang20,SGS21}. These results all rely on metric entropy of the Gaussian mixture class 
 obtained via approximation by finite mixtures. {In this vein,  the quantity $m^\star(n^{-1/2},\calP,d)$ is referred to as the \textit{statistical degree} of $\calP$ \cite{pw20}, which represents the smallest $m$ so that any
density in $\calP$ can be made statistically indistinguishable  (on the basis of $n$ observations) from some
density in $\calP_m$.}

A related quantity to finite mixture approximation, known as \textit{smoothed empirical distribution}, has been studied in recent literature \cite{GGNWP20,CNW22,BJPR22}. Given a sample \((X_1, \ldots, X_m)\iiddistr P\), we approximate  \(P * \phi\) by \(P_m * \phi\), where \(P_m=\frac{1}{m}\sum_{i=1}^m\delta_{X_i}\) is the empirical measure. For mixing distributions with bounded support or subgaussian tails, the convergence rate of this approximation is shown to be dimension-free and polynomial in \(m\), measured by different Wasserstein distances and \(f\)-divergences (\eg, $O(m^{-1/2})$ under $\TV$ and $O(m^{-1})$ under $\chi^2$). 
Note that $P_m$ is a special $m$-atomic distribution. 
As noted in 
\cite[Sec.~VII]{WV10}, the achieved approximation error is consistent with CLT-type of rates which are markedly slower than the optimal rates that are typically exponentially.

\subsection{Organization}
The rest of this paper is organized as follows.
In Section~\ref{sec:prelim}, we provide the background on trigonometric moment matrices and orthogonal polynomials on the real line and on the unit circle. Section \ref{sec:UB} and \ref{sec:LB} contain  upper and lower bounds on the best approximation error  $\appr$ and present the main proof ideas. 
Section~\ref{sec:discussion} discusses  applications to  the convergence rate of nonparametric maximum likelihood estimator and the extensions to other classes of mixing distributions and mixture models.
Additional backgrounds and proof details are presented in the Appendices.

\subsection{Notations}
Let $[k]= \{1, \dots , k\}$ for $k \in \naturals$.
 Denote by $\delta_{jk}=\indc{j=k}$  the Kronecker's
delta notation. We use bold symbols to represent vectors and matrices. For a vector $\bfx$, denote $\bfx^\top$ and $\bfx^\star$ as the transpose and the Hermitian transpose, respectively, and $\operatorname{Diag}(\bfx)$ as the corresponding diagonal matrix. Denote $\|\cdot\|$ as the Euclidean norm for vectors or spectral norm for matrices, and let $\|\cdot\|_F$ be the Frobenius norm. Write $\lambda_{\min}(\bfA)$ as the smallest eigenvalue of a Hermitian matrix $\bfA$.

\section{Preliminaries}
\label{sec:prelim}

\subsection{Trigonometric moment matrices}
\label{sec:momentmatrix}
The theory of moments is fundamental in many areas of  probability, statistics, and approximation theory \cite{usp37}. Given a distribution $P$ and $X\sim P$,
denote its $k\Th$ moment by $m_k=m_k(P)=m_k(X)=\Expect_P[X^k]$. 
The moment matrix associated with $P$ of order $n+1$ is the following \textit{Hankel} matrix:
\begin{equation}    
\label{eq:eq2-M}
\bfM_{n}(X)=\left[\begin{array}{cccc}
m_0 & m_{1} & \cdots & m_{n} \\
m_{1} & m_{2} & \cdots & m_{n+1} \\
\vdots & \vdots & \ddots & \vdots \\
m_{n} & m_{n+1} & \cdots & m_{2n}
\end{array}\right]_{(n+1)\times (n+1)}.
\end{equation}
Denote the vector of monomials as $\bfX_{n}=(1,X,\ldots,X^n)^\top$. The moment matrix of $P$ can be equivalently represented as 
$\bfM_{n}(X)=\Expect_P[\bfX_{n}\bfX_{n}^\top]$. Consequently, if $P$ is discrete with no more than $m$ atoms, the moment matrix of any order is of rank at most $m$, and $P$ can be uniquely determined by its first $2m-1$ moments \cite{usp37}. 

The above formulation can also be adapted to the trigonometric moments. 
For $k\in\integers$, denote $t_k=t_k(P)=t_k(X)=\Expect_P[e^{ikX}]$ as the $k\Th$ order Fourier coefficients (or characteristic functions) of $P$. Define the \textit{Toeplitz} matrix 
\begin{equation}
\label{eq:eq2-T}
\bfT_{n}(X)=\left[\begin{array}{cccc}
t_0 & t_{1} & \cdots & t_{n} \\
t_{-1} & t_0 & \cdots & t_{n-1} \\
\vdots & \vdots & \ddots & \vdots \\
t_{-n} & t_{-(n-1)} & \cdots & t_0
\end{array}\right]_{(n+1)\times (n+1)}
\end{equation}
as the trigonometric moment matrix associated with $P$ of order $n+1$.  
$\bfT_{n}(X)$ is equivalently the ordinary moment matrix of $Z=e^{iX}$ in the sense that $\bfT_{n}(X)=\Expect_P[\bfZ_{n}\bfZ_{n}^\star]$ for 
$\bfZ_{n}=(1,Z,\ldots,Z^n)^\top$.
Note that both $\bfM_n$ and $\bfT_n$ are positive semidefinite matrices.

Our proof of the converse results relies on classical theory of moment matrices. For Hankel moment matrices, the seminal work \cite{Szeg1936OnSH} studied the asymptotic behavior of the eigenvalues for Gaussian and exponential weights.   Systematical treatments for general distribution classes are given by a series of work \cite{WidomWilf66,Chen1999SmallEO,Berg1999SmallEO,Chen2004SmallEO}. \cite{STAMPACH2019483} proposes a generalized result for certain forms of weighted Hankel matrices. 
\cite{szego1953} gives characterization for eigenvalues of Toeplitz forms, which applies in particular to the trigonometric moment matrices. 

The moment matrices are Hermitian and positive definite, provided that the corresponding distribution has infinite support \cite{Szeg1936OnSH}. It turns out that the smallest eigenvalue of the moment matrix plays an important role in the derivation of the lower bound. \cite{PASUPATHY1992} gives bounds on eigenvalues for Gaussian Toeplitz matrices, but the lower bound is suboptimal. \cite{Chen1999SmallEO} introduces a framework of bounding the smallest eigenvalue, 
extending the method of \cite{Szeg1936OnSH}. Specifically, the ordinary moment matrix \eqref{eq:eq2-M} is related to the orthogonal polynomials on the real line, and the trigonometric moment matrix \eqref{eq:eq2-T}  is related to the orthogonal polynomials on the unit
circle (see Section~\ref{sec:LB} for further discussions). 

\subsection{Orthogonal polynomials}
\label{sec:ortho}
  Orthogonal polynomials on the real line and on the unit circle serve as useful tools in our proofs. 
 Given a weight function  $w:\reals\mapsto [0,\infty)$,  
 denote by $\{p_n(x)\}$ the set of monic (leading coefficient equal to one) orthogonal polynomials on the real line associated with the weight $w(x)$, such that the degree of $p_n(x)$ is 
 $n$ and
\begin{equation}
    \label{eq:oprl}
    \int p_j(x)p_k(x) w(x) \diff x = h_j \delta_{jk}, \quad \forall j,k\geq 0.
\end{equation}
When $w(x)$ is the density of a probability distribution $P$,  $\{p_n(x)\}$ is said to be the set of orthogonal polynomials associated with $P$.

For a density function $f(\theta)$ supported on a subset of $\reals$, 
let $\{\varPhi_n(z)\}$
be the associated set of monic orthogonal polynomials on the unit circle, that is, $\varPhi_n$ is of degree $n$, and 
\begin{equation}
    \label{eq:opuc}
    \int \varPhi_j(e^{i\theta})\overline{\varPhi_k(e^{i\theta})} f(\theta) \diff\theta =\kappa_j^{-2} \delta_{jk}, \quad \forall j,k\geq 0.
\end{equation}

The formula of Szeg\"o \cite[Theorem 11.5]{SzegoOrthopolys}
shows the relation between orthogonal polynomials on the unit circle and on the real line, provided that the weight functions of the two orthogonal systems are related. This result is further developed in the follow-up work \cite{ZHEDANOV1998,Krasovsky2003}. See Section~\ref{sec:lb-unif} for a detailed exposition and applications.

In Appendix~\ref{app:ortho}, we describe several concrete examples of orthogonal polynomials associated with certain distributions that will be used in our proof. We refer the readers to \cite{SzegoOrthopolys} for a comprehensive review of orthogonal polynomials and \cite{BarrySimon2005a} for a special treatment of orthogonal polynomials on the unit circle.

\section{Achievability via moment matching}
\label{sec:UB}
In this section, we give upper bounds of finite mixture approximation of the distribution family $\calP$. For any $P\in\calP$, we need to construct an $m$-atomic distribution $P_m$ achieving a small approximation error measured by $d(f_{P_m},f_P)$. 
Previous results have shown that comparing moments is useful in determining the approximation accuracy. The Gauss quadrature, which is briefly explained in Appendix~\ref{app:GQ}, serves as a classical and effective approach to the global discrete approximation in the sense of matching moments.
For example, \cite[Theorem 8]{WV10} considers the $m$-point Gaussian quadrature with a scale parameter $\sigma$ that matches the first $2m-1$ moments of $N(0,\sigma^2)$. For compactly supported distributions, \cite[Lemma 1]{Zhang08} provides another moment matching approximation.
 
In our non-asymptotic analysis framework, when the relative scale of the parameter of the distribution family compared with $m$ changes, different treatments are needed. 
Consider the family of compactly supported distributions as an example. The previous result \cite{Zhang08} becomes suboptimal unless $M$ grows with $m$ at a certain rate, as shown in \eqref{eq:ub-bdd}. This problem also arises for distribution families with tail conditions. 

Aiming at tight upper bounds, we carry out a refined analysis based on the technique of moment matching. The following is a road map for this section. We start from the compactly supported case in Section~\ref{sec:ub-bdd}, 
where we bound the $\chi^2$-divergence by moment differences and construct moment matching approximations both globally and locally depending on the relationship between parameters. 
Then, in Section~\ref{sec:ub-tail}, convergence rates for distribution families with tail conditions are obtained via an extra truncation argument. We derive rate upper bounds for the $\calP_\alpha(\beta)$ 
 family, and the proof idea also works for general choices of $\psi$ by plugging in the tail conditions (see Section~\ref{sec:extension-moment} for an extension).

\subsection{Compactly supported distributions}
\label{sec:ub-bdd}
The following lemma is useful for upper bounding the  approximation error in $\chi^2$-divergence by comparing moments.
\begin{lemma}
    \label{lem:mm1}
    Suppose that $P,Q\in\Pbdd{M}$. For any $J>4M^2$, if $m_j(P)=m_j(Q)$ for $j\in[J-1]$,
\begin{equation*}
  \chi^{2}\left(f_P \| f_Q\right) \leq 4\exp\pth{\frac{M^2}{2}}\pth{\frac{4eM^2}{J}}^{J}.
\end{equation*}
\end{lemma}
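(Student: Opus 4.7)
The plan is to express $f_P - f_Q$ via a Hermite expansion, reduce the $\chi^2$-divergence to a weighted $L^2$ integral, and exploit matched moments to kill the low-order terms. First I would use the generating function $e^{x\theta - \theta^2/2} = \sum_k \frac{He_k(x)}{k!}\theta^k$ for the probabilists' Hermite polynomials to write
$$(f_P - f_Q)(x) = \phi(x)\, g(x), \quad g(x) = \int e^{x\theta - \theta^2/2}\, d(P-Q)(\theta) = \sum_{k \geq J} \frac{He_k(x)}{k!}\bigl(m_k(P) - m_k(Q)\bigr),$$
where the terms with $k < J$ vanish by moment matching, and the surviving coefficients satisfy $|m_k(P) - m_k(Q)| \leq 2M^k$ by compact support.

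Next, I would exploit the pointwise lower bound $f_Q(x) \geq \phi(|x|+M) = \phi(x)\, e^{-M|x| - M^2/2}$ (valid since $Q$ is supported in $[-M, M]$) to get
$$\chi^2(f_P \| f_Q) = \int \frac{(f_P - f_Q)^2}{f_Q}\, dx \leq e^{M^2/2} \int \phi(x) g(x)^2 e^{M|x|}\, dx.$$
Splitting $e^{M|x|} \leq e^{Mx} + e^{-Mx}$ and using the completion-of-square identity $\phi(x) e^{\pm Mx} = e^{M^2/2}\, \phi(x \mp M)$ reduces this to
$$\chi^2(f_P \| f_Q) \leq e^{M^2}\bigl( \mathbb{E}_{X \sim N(M, 1)}[g(X)^2] + \mathbb{E}_{X \sim N(-M, 1)}[g(X)^2] \bigr).$$

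To bound each Gaussian expectation, I would use the product identity $\mathbb{E}_{X\sim N(\mu,1)}[e^{X\theta - \theta^2/2}\, e^{X\theta' - \theta'^2/2}] = e^{\mu(\theta + \theta') + \theta \theta'}$, swap expectation with the double integral defining $g^2$, and expand $e^{\theta\theta'} = \sum_k (\theta\theta')^k / k!$ to obtain
$$\mathbb{E}_{N(\mu, 1)}[g(X)^2] = \sum_{k=0}^\infty \frac{h_k^2}{k!}, \qquad h_k := \int \theta^k e^{\mu\theta}\, d(P - Q)(\theta).$$
Expanding $e^{\mu\theta}$ as a power series and invoking moment matching one more time yields $h_k = \sum_{j \geq (J-k)_+} \mu^j (m_{k+j}(P) - m_{k+j}(Q))/j!$, so that $|h_k| \leq 2M^k \sum_{j \geq (J-k)_+} M^{2j}/j!$ (using $|\mu| \leq M$).

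Finally, the hypothesis $J > 4M^2$ controls these tail sums: for $k \geq J$ the full exponential $\sum_j M^{2j}/j! = e^{M^2}$ suffices, while for $k < J$ with $J - k > 2M^2$ the geometric comparison $\sum_{j \geq J-k} M^{2j}/j! \leq 2\, M^{2(J-k)}/(J-k)!$ applies. Assembling $\sum_k h_k^2/k!$, using $l!^{-1} \leq (e/l)^l$ from Stirling, and verifying that for $J \geq 4M^2$ the intermediate-$k$ contributions are dominated by $4^J$ (via the elementary inequality $2M^2(1 + \ln(J/M^2)) \leq J \ln 4$ whenever $J/M^2 \geq 4$) would give $\mathbb{E}_{N(\mu,1)}[g(X)^2] \leq 2 e^{-M^2/2}(4eM^2/J)^J$. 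Combined with the display above this yields the claimed bound $\chi^2(f_P \| f_Q) \leq 4 e^{M^2/2}(4eM^2/J)^J$. The main obstacle is precisely this last bookkeeping step: the naive bound $\sum_{j \geq J-k} M^{2j}/j! \leq e^{M^2}$ for all $k$ would introduce an unwanted $e^{M^2}$ rather than $e^{M^2/2}$ factor, so the finer geometric-tail estimate in the range $k < J - 2M^2$ is essential, and all the intermediate inflations (such as $(J/M^2)^{2M^2}$) must be checked to be absorbed into $4^J$.
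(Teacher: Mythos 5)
Your proposal shares the same skeleton as the paper's argument (Hermite expansion of $(f_P-f_Q)/\phi$ plus a pointwise lower bound on $f_Q$), but the particular lower bound you pick makes the bookkeeping far harder than necessary, and the crucial final step is only sketched, not carried out. The paper, via the cited Lemma 9 of \cite{WY19}, first recenters both distributions by $\mu=\Expect[Q]$. Once $Q'_\mu$ has mean zero, Jensen's inequality gives the \emph{uniform} lower bound $f_{Q'_\mu}(x)\geq \phi(x)\,e^{-\var(Q)/2}$, because $\log f_{Q'_\mu}(x)=\log\Expect[\phi(x-\theta)]\geq \Expect[\log\phi(x-\theta)]=\log\phi(x)-\tfrac12(\Expect[\theta^2])$ when $\Expect[\theta]=0$. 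That turns the $\chi^2$ into $e^{\var(Q)/2}\int \phi\,g^2$, and since the normalized Hermite polynomials are orthonormal under $\phi$, the integral is exactly the Parseval tail sum $\sum_{j\geq J}(m_j(P'_\mu)-m_j(Q'_\mu))^2/j!$; with $|m_j|\leq (2M)^j$ after centering and a Poisson tail estimate, the lemma follows in two lines. Your non-centered bound $f_Q(x)\geq\phi(|x|+M)=\phi(x)e^{-M|x|-M^2/2}$, by contrast, drags in the $e^{M|x|}$ factor, which forces the tilted Gaussians $N(\pm M,1)$, the quantities $h_k$, and the $(J-k)_+$ truncation. You correctly identify that the resulting bookkeeping is the crux and then do not actually do it: the treatment of the intermediate range $J-2M^2\leq k<J$, and whether the precise constant $4e^{M^2/2}(4eM^2/J)^J$ survives all of the inflations you list, is asserted rather than verified (and spot checks suggest the prefactor is delicate when $M^2$ is of order one and $J$ is close to $4M^2$). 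A bound of the same general form would likely be recoverable from your route with a worse constant, but as written the last paragraph is a plan, not a proof; the recentering-plus-Jensen trick is exactly the missing idea that collapses this into a one-line orthogonality computation.
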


\begin{proof}
Denote $\mu=\Expect[Q]$ and  $\sigma^2=\var[Q]$. Let $U\sim P$, $V\sim Q$, $U-\mu\sim {P}^\prime_\mu$, and $V-\mu\sim {Q}^\prime_\mu$. Since $m_j(P)=m_j(Q)$ for $j\in[J-1]$, it holds that $m_j({P}^\prime_\mu)=m_j({Q}^\prime_\mu)$ for $j\in[J-1]$. 
Applying \cite[Lemma 9]{WY19} with $\Expect_{{Q}^\prime_\mu}[X]=0$, we have 
$$
\chi^{2}\left(f_{P} \| f_{Q}\right)
= \chi^{2}\left(f_{{P}^\prime_\mu} \| f_{{Q}^\prime_\mu}\right) \leq e^{\frac{\sigma^2}{2}} \sum_{j \geq J} 
\frac{\left(m_j({P}^\prime_\mu)-m_j({Q}^\prime_\mu)\right)^{2}}{j !}.$$
Since ${P}^\prime_\mu,{Q}^\prime_{\mu}\in\Pbdd{2M}$, we have $|m_j({P}^\prime_\mu)|, |m_j({Q}^\prime_\mu)|\le (2M)^j$.
Since $\sigma^2\le M^2$, it follows that
\begin{align*}
    \chi^{2}\left(f_{P} \| f_{Q}\right)
\le \exp\pth{\frac{M^2}{2}} \sum_{j \geq J} 
\frac{[2\left(2M\right)^j]^{2}}{j !}
\le 4\exp\pth{\frac{M^2}{2}}\pth{\frac{4eM^2}{J}}^{J},
\end{align*}
where the last inequality follows from 
{the Chernoff bound for Poisson distribution $\pbb[X\geq J]\leq e^{-4M^2}\left(\frac{4e M^2}{J}\right)^{J}$ for $X\sim\operatorname{Poisson}(4M^2)$ and $J>4M^2$ (see, \eg, \cite[Theorem 4.4]{mitzenmacher2005probability})}.
\end{proof}

Now we are ready to show the $\chi^2$ approximation error bound for compactly supported mixing distributions. 
When $M$ is small, we construct a discrete distribution that matches the first few moments.
However, this approach is loose for large $M$ and can be remedied using the idea of \textit{local approximation}. Specifically,  we partition the support into subintervals and approximate each conditional distribution via moment matching; similar construction of local moment matching has appeared in the statistics literature \cite{Zhang08,SW22} by applying Carathe\'odory's theorem to the truncated moments. 
{This approach determines the allocation of atoms across subintervals, and tight upper bounds then follow from the trade-off between the number of atoms per interval and the interval size.} 
The overall approximation error can be bounded combining that in each subinterval.  

\begin{theorem}
  \label{thm:ub-bdd}
  There exists a universal constant $\kappa>0$ such that for any $m\in \naturals$ and $M>0$:
  \begin{equation}
    \label{eq:ub-bdd}
\appr(m,\Pbdd{M},\chi^2)\leq
  \begin{cases}
      \exp\left(-m\log\frac{m}{M^2}\right), & m\geq \kappa M^2;\\
      \exp\pth{-\frac{\log\kappa }{4\kappa}\frac{m^2}{M^2} }, & 3\sqrt{\kappa}M \leq m\leq  \kappa M^2.\\
  \end{cases}
  \end{equation}
  \end{theorem}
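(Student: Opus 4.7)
I would handle the two regimes in \eqref{eq:ub-bdd} separately, both built on Lemma \ref{lem:mm1}. The large-$m$ regime uses global moment matching over the whole support $[-M, M]$; the intermediate regime uses local moment matching after partitioning $[-M, M]$ into suitably many subintervals.

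For $m \geq \kappa M^2$, I would invoke Carath\'eodory's theorem (or, equivalently, the existence of an $m$-point Gauss quadrature rule for $P$) to produce an $m$-atomic distribution $P_m$ supported in $[-M, M]$ whose first $2m - 1$ moments agree with those of $P$. If $\kappa$ is chosen large enough that $2m - 1 > 4M^2$, Lemma \ref{lem:mm1} with $J = 2m - 1$ yields
\begin{equation*}
    \chi^2(f_P \| f_{P_m}) \leq 4 e^{M^2/2} \pth{\frac{4eM^2}{2m-1}}^{2m-1}.
\end{equation*}
Under $m \geq \kappa M^2$, the factor $e^{M^2/2}$ is dominated by the second factor, and elementary algebra turns this into $\exp(-m \log(m/M^2))$ once $\kappa$ is taken large enough to absorb numerical constants such as $4e$ into the logarithm.

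For $3\sqrt{\kappa} M \leq m \leq \kappa M^2$, direct moment matching with $J = 2m - 1$ may fail to satisfy $J > 4M^2$, so I would instead partition $[-M, M]$ into $K = \lceil 2\kappa M^2/m \rceil$ equal-width subintervals $I_1, \ldots, I_K$ and approximate locally. Writing $P_k \triangleq P(\cdot \mid I_k)$ and $w_k \triangleq P(I_k)$, apply Carath\'eodory's theorem on each $I_k$ to obtain an $\ell$-atomic distribution $Q_k$ supported in $I_k$ matching the first $2\ell - 1$ moments of $P_k$, with $\ell = \lfloor m/K \rfloor$; the aggregate $Q \triangleq \sum_k w_k Q_k$ is then at most $m$-atomic. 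Since $f_P = \sum_k w_k f_{P_k}$ and $f_Q = \sum_k w_k f_{Q_k}$ share the same mixing weights, joint convexity of $\chi^2$ gives
\begin{equation*}
    \chi^2(f_P \| f_Q) \leq \sum_k w_k \chi^2(f_{P_k} \| f_{Q_k}) \leq \max_k \chi^2(f_{P_k} \| f_{Q_k}).
\end{equation*}
After translating so that each $I_k$ is centered at the origin, the pair $(P_k, Q_k)$ lies in $\Pbdd{M/K}$, so Lemma \ref{lem:mm1} applies with $M' = M/K$ and $J = 2\ell - 1$. The choice $K \asymp M^2/m$ makes $2\ell \asymp m^2/(\kappa M^2)$ and $4eM'^2/J \asymp e/\kappa$, producing an exponent of order $\log\kappa/\kappa \cdot m^2/M^2$; tracking constants carefully is what delivers the stated $\frac{\log\kappa}{4\kappa}$ exponent.

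The main obstacle is balancing the choice of $K$ against three competing constraints: (i) $2\ell - 1 > 4M'^2$, which is required for Lemma \ref{lem:mm1} to apply; (ii) $\ell \geq 1$, so that each subinterval receives at least one atom---this is precisely where the hypothesis $m \geq 3\sqrt{\kappa} M$ is used; and (iii) making the resulting exponent as large as possible in $m^2/M^2$. A minor nuisance is verifying that the prefactor $e^{M'^2/2}$ from Lemma \ref{lem:mm1} does not degrade the rate: under the chosen $K$ one has $M'^2/2 \asymp m^2/(\kappa^2 M^2)$, which is negligible compared to the $\frac{\log\kappa}{\kappa} \cdot \frac{m^2}{M^2}$ main term once $\kappa$ is large enough.
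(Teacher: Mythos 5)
Your proposal is correct and follows essentially the same route as the paper: Gauss quadrature plus Lemma~\ref{lem:mm1} for the regime $m\geq\kappa M^2$, and a partition of $[-M,M]$ into $K\asymp \kappa M^2/m$ subintervals with local moment matching plus joint convexity of the $\chi^2$-divergence for the intermediate regime. The only differences are cosmetic constant choices (you take $K=\lceil 2\kappa M^2/m\rceil$ and $J=2m-1$, while the paper uses $K=\lfloor 3\kappa M^2/m\rfloor$ and $J=2m$), and you should be a bit careful with the phrasing ``Carath\'eodory's theorem (or, equivalently, Gauss quadrature)'': they are not equivalent, since Gauss quadrature matches $2\ell-1$ moments with $\ell$ atoms whereas Carath\'eodory gives only $\ell$ moments, and the argument as written relies on the stronger Gauss-quadrature count.
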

\begin{proof}
Suppose that $m\geq \kappa M^2$ for some universal constant $\kappa\geq 16e^3$.
For $P\in\Pbdd{M}$, by the Gauss quadrature rule (see Appendix~\ref{app:GQ}), there exists $P_m\in\Pbdd{M}\cap \calP_m$ that matches the first $2m-1$ moments of $P$. By Lemma~\ref{lem:mm1}, we have
\begin{align}
      \chi^{2}\left(f_{P_m} \| f_{P}\right)
      &\le 4\exp\left(-2m\log\frac{m}{M^2}+\frac{M^2}{2}
      +\log(4e^2)m\right)\nonumber\\
      &\leq \exp\left(-m\log\frac{m}{M^2}\right).
       \label{eq:ub-bdd-1}
    \end{align}

Suppose that $3\sqrt{\kappa}M\leq m\leq \kappa M^2$ holds. Set $K=\floor{\frac{3\kappa M^2}{m}}\geq 3$.
We partition the interval $[-M,M]$ into $K$ subintervals $I_j=[-M+(j-1)\frac{2M}{K},-M+j\frac{2M}{K}]$ for $j\in[K]$. Let $P_{(j)}$ be the conditional distribution of $P$ on $I_j$, that is, for any Borel set $A$, $P_{(j)}(A)=P(A|I_j).$ Denote  $\tilde{m}=\floor{m/K}$.
Note that the condition $\tilde{m} \ge \kappa \pth{\frac{M}{K}}^2$ holds by $$K^2\floor{\frac{m}{K}}\geq 
\floor{\frac{3\kappa M^2}{m}}^2
\floor{\frac{m^2}{3\kappa M^2}} \geq \pth{\frac{3}{4}\frac{3\kappa M^2}{m}}^2
\pth{\frac{3}{4}\frac{m^2}{3\kappa M^2}} 
\geq 
\kappa M^2,$$ 
where the second inequality follows from the facts that 
$\floor{x}\geq \frac{c}{c+1} x$ for all $x\geq c, c\in\naturals$, and $\min\sth{\frac{m^2}{3\kappa M^2},\frac{3\kappa M^2}{m}}\geq 3$.
Applying \eqref{eq:ub-bdd-1} to each $P_{(j)}$ with a translation, there exists $\tilde P_{j}$ supported on at most $\tilde m$ atoms such that
\begin{align*}
       \chi^2(f_{\tilde P_{j}}\|f_{P_{(j)}})
\leq \exp\left(-\floor{\frac{m}{K}}\log\frac{\floor{\frac{m}{K}}}{\left(\frac{M}{K}\right)^2}\right)
\leq \exp\left(-\frac{3}{4}\frac{m^2}{3\kappa M^2}\log\kappa\right)
= \exp\left(-\frac{\log\kappa}{4\kappa}\frac{m^2}{ M^2}\right).
\end{align*}

Define $P_m = \sum_{j=1}^K  P(I_j) \tilde P_{j}$ supported on at most $m$ atoms.
Since $P=\sum_{j=1}^K P(I_j) P_{(j)}$, by Jensen's inequality and the convexity of $f$-divergences,
\begin{equation*}
    \chi^{2}\left(f_{P_{m}} \| f_{P}\right)
    \leq \sum_{j=1}^K P(I_j) \chi^2(f_{\tilde P_{j}}\|f_{P_{(j)}})
    \leq \exp\pth{-\frac{\log\kappa }{4\kappa}\frac{m^2}{M^2} }.
    \qedhere
\end{equation*}
\end{proof}

\subsection{Distribution families under tail conditions}
\label{sec:ub-tail}
In this subsection, we extend our analysis to the sub-Weibull family $\calP_\alpha(\beta)$ using a truncation argument. 
For each $P \in \calP_\alpha(\beta)$, we reduce the problem to
approximating the conditional distribution $P_t$ of $P$ on $[-t,t]$, where the approximate error is  bounded by Theorem~\ref{thm:ub-bdd} above.
The final result follows by optimizing $t$. 
Moreover, this approach is applicable for general distribution families by incorporating the corresponding tail probability bounds.

The following lemma upper bounds the $\chi^2$-divergence between two (general) mixtures by truncating one of the mixing distributions:
\begin{lemma}
\label{lem:chi2-truncate}
    Let  $A$ be a Borel set such that $P(A),P(A^c)>0$.    
    Denote by $P_A$ the conditional distribution of $P$ on $A$, 
    i.e.,    $P_A(\cdot)=P(\cdot \cap A)/P(A)$.
    For any distribution $Q$, we have 
    \begin{equation*}
        \chi^2(f_{Q}\|f_{P})\leq \frac{2}{P(A)}\pth{\chi^2(f_{Q}\|f_{P_A})+P(A^c)}.
    \end{equation*}
\end{lemma}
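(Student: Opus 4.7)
The plan is to exploit the mixture decomposition of $P$ induced by the Borel set $A$. Writing $P = P(A) P_A + P(A^c) P_{A^c}$ and convolving with $\phi$ produces the pointwise identity $f_P = P(A) f_{P_A} + P(A^c) f_{P_{A^c}}$. The key observation is that the second term is nonnegative everywhere, so
\[
  f_P(x) \geq P(A)\, f_{P_A}(x), \quad \forall x \in \reals.
\]
This pointwise lower bound translates a $\chi^2$-divergence against $f_P$ into one against $f_{P_A}$ at the cost of a multiplicative factor $1/P(A)$.

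Concretely, I would start from the standard identity $\chi^2(g\|h) = \int g^2/h\, dx - 1$ valid for any probability densities $g,h$. Applying the pointwise bound inside the integrand gives
\[
  \int \frac{f_Q^2}{f_P}\, dx \;\leq\; \frac{1}{P(A)} \int \frac{f_Q^2}{f_{P_A}}\, dx \;=\; \frac{1}{P(A)}\bigl(\chi^2(f_Q\|f_{P_A}) + 1\bigr).
\]
Subtracting $1$ from both sides and writing $1 - P(A) = P(A^c)$ then yields
\[
  \chi^2(f_Q\|f_P) \;\leq\; \frac{1}{P(A)}\bigl(\chi^2(f_Q\|f_{P_A}) + P(A^c)\bigr),
\]
which already implies the claimed inequality (and is in fact tighter by a factor of two).

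I do not anticipate any genuine obstacle: the argument is a one-line consequence of the mixture structure of $P$ combined with the identity $\chi^2(g\|h) = \int g^2/h\, dx - 1$. The constant $2$ appearing in the lemma statement seems to be slack and may reflect a more symmetric form of the argument or simply a convenient rounding for downstream use in the truncation argument of Section~\ref{sec:ub-tail}.
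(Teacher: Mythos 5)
Your argument is correct and in fact gives a strictly tighter bound than the lemma claims. The paper's proof works with the representation $\chi^2(f_Q\|f_P) = \int (f_Q - f_P)^2 / f_P$, decomposes
\[
f_Q - f_P = (f_Q - f_{P_A}) + P(A^c)\bigl(f_{P_A} - f_{P_{A^c}}\bigr),
\]
and applies $(x+y)^2 \leq 2x^2 + 2y^2$ to split the integral; that is precisely where the factor $2$ enters. The first resulting piece is then bounded using $f_P \geq P(A) f_{P_A}$, and the second using both $f_P \geq P(A) f_{P_A}$ and $f_P \geq P(A^c) f_{P_{A^c}}$, yielding $\int (f_{P_A}-f_{P_{A^c}})^2/f_P \leq 1/(P(A)P(A^c))$. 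Your route instead applies the pointwise lower bound $f_P \geq P(A) f_{P_A}$ directly inside the alternative identity $\chi^2(g\|h) = \int g^2/h - 1$, avoiding the quadratic splitting step and hence the factor $2$ entirely. The only implicit hypothesis is that $f_{P_A} > 0$ on the support of $f_Q$, which is automatic here because every Gaussian location mixture is strictly positive on $\reals$; and if $\int f_Q^2 / f_{P_A}$ is infinite the inequality is vacuous anyway. You are also right that the slack constant $2$ is harmless downstream: it is absorbed into the $O(\cdot)$ bounds of Proposition~\ref{prop:ub-tail} and Theorem~\ref{thm:ub--subW}. In short, a correct proof, by a genuinely more direct route, with a sharper constant.
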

\begin{proof}
Define $P_{A^c}$ as $P$ conditioned on the complement $A^c$.
By linearity, the mixture can be decomposed as $f_P=P(A)f_{P_A} + P(A^c)f_{P_{A^c}}$. Then
\begin{align*}
    \chi^2(f_{Q}\|f_{P})
    &= \int \frac{(f_{Q}-P(A)f_{P_A} - P(A^c)f_{P_{A^c}})^2}{f_P}  \\
    &\leq 2\int \frac{(f_{Q}-f_{P_A})^2+ [P(A^c)(f_{P_A} -f_{P_{A^c}})]^2}{f_P}  \\
    &\leq \frac{2\chi^2(f_{Q}\|f_{P_A})}{P(A)}+2P(A^c)^2\int \frac{ (f_{P_A} -f_{P_{A^c}})^2}{f_P} .
\end{align*}
Since $f_P\ge P(A) f_{P_A}$ and $f_P\ge P(A^c) f_{P_{A^c}}$, it follows that
\begin{equation*}
    \int \frac{ (f_{P_A} -f_{P_{A^c}})^2}{f_P}  \leq
    \int \frac{ f_{P_A}^2 +f_{P_{A^c}}^2}{f_P} 
    \leq \frac{1}{P(A)}+\frac{1}{P(A^c)} =
    \frac{1}{P(A)P(A^c)}.
\end{equation*}
Consequently,
\begin{align*}
        \chi^2(f_{Q}\|f_{P})
        &\leq \frac{2}{P(A)}\pth{\chi^2(f_{Q}\|f_{P_A})+P(A^c)}. \qedhere
\end{align*}
\end{proof}
The next proposition directly follows from Lemma~\ref{lem:chi2-truncate} with $A=I_t\triangleq [-t,t]$  and the definition of $\appr$. 
\begin{proposition}
\label{prop:ub-tail}
For any distribution family $\calP$, with the same notation as in Lemma~\ref{lem:chi2-truncate}, we have
\begin{align*}
    \label{eq:prop-ub-tail-chi2}
    \appr(m,\calP,\chi^2)\leq & \inf_{t>0} \sup_{P\in\calP}
    \frac{2}{P(I_t)}\pth{\appr(m,\Pbdd{t},\chi^2)+P(I_t^c)}.
\end{align*}
\end{proposition}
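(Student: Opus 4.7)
The plan is to reduce the approximation problem on $\calP$ to the one on $\Pbdd{t}$ via the truncation bound of Lemma~\ref{lem:chi2-truncate}. For every $P \in \calP$ and $t>0$, the conditional distribution $P_{I_t}$ (with $I_t = [-t,t]$) belongs to $\Pbdd{t}$, so good $m$-atomic approximations of $f_{P_{I_t}}$ are available by definition of $\appr(m,\Pbdd{t},\chi^2)$; the truncation lemma then bounds the extra cost of replacing $P_{I_t}$ by $P$ inside the mixture.

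Concretely, I would fix $P\in\calP$ and $t>0$ with $0<P(I_t)<1$. For any $\delta>0$, choose $Q\in\calP_m$ with $\chi^2(f_Q\|f_{P_{I_t}})\le \appr(m,P_{I_t},\chi^2)+\delta\le \appr(m,\Pbdd{t},\chi^2)+\delta$, where the second inequality uses $P_{I_t}\in\Pbdd{t}$ and the definition of the uniform quantity $\appr(m,\Pbdd{t},\chi^2)$. Next, I would apply Lemma~\ref{lem:chi2-truncate} with $A=I_t$ to transfer this to a bound on $\chi^2(f_Q\|f_P)$:
\[
\chi^2(f_Q\|f_P)\le \frac{2}{P(I_t)}\bigl(\appr(m,\Pbdd{t},\chi^2)+P(I_t^c)+\delta\bigr).
\]
Taking the infimum over $Q\in\calP_m$ and letting $\delta\downarrow 0$ gives the pointwise bound on $\appr(m,P,\chi^2)$; then take $\sup_{P\in\calP}$ followed by $\inf_{t>0}$ to arrive at the stated inequality.

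The only fine print is the degenerate values of $t$: if $P(I_t)=1$ then $P\in\Pbdd{t}$, so $\appr(m,P,\chi^2)\le \appr(m,\Pbdd{t},\chi^2)$, which is dominated by the right-hand side of the proposition; and if $P(I_t)=0$ the right-hand side is infinite and nothing is claimed. I do not expect any real obstacle here: the statement is essentially a repackaging of Lemma~\ref{lem:chi2-truncate}, with the quantifier order (inf over $Q$, sup over $P$, inf over $t$) unpacked in the correct sequence. The only mild technicality is the $\delta$-slack used to handle the fact that the infimum defining $\appr(m,P_{I_t},\chi^2)$ is not a priori attained.
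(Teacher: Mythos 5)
Your proof is correct and is essentially the paper's argument: the paper simply states that the proposition follows directly from Lemma~\ref{lem:chi2-truncate} with $A=I_t$ together with the definition of $\appr$, and you have filled in exactly that chain of reasoning with the quantifiers unpacked in the right order and the $\delta$-slack handling the non-attainment of the infimum.
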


Now we can derive upper bounds for distribution classes under tail conditions. In the following, we state the universal approximation rate for the family $\calP_\alpha(\beta)$. Extensions to other families are discussed in Section~\ref{sec:extension-moment}. 
\begin{theorem}
    \label{thm:ub--subW}
    There are constants $c_\alpha,C_\alpha$, such that for all 
    $m\geq C_\alpha \beta$,     
    \begin{equation}
        \label{eq:ub--subW}
        \appr(m,\calP_\alpha(\beta),\chi^2) \leq 
        \exp\sth{-c_\alpha m\log \pth{1+\frac{m^{\frac{\alpha-2}{\alpha+2}}}{\beta^{\frac{2\alpha}{\alpha+2}}}}}.
    \end{equation}
\end{theorem}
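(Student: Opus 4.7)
The plan is to apply Proposition~\ref{prop:ub-tail} to reduce the sub-Weibull approximation problem to the compactly supported case handled by Theorem~\ref{thm:ub-bdd}, choosing the truncation radius to balance the two competing error terms. Given $P\in\calP_\alpha(\beta)$, the sub-Weibull tail bound \eqref{eq:subweibull-tail} gives $P(I_t^c)\leq 2\exp(-(t/\beta)^\alpha)$ for $I_t=[-t,t]$; shrinking $t$ makes this larger but the compactly supported approximation error from Theorem~\ref{thm:ub-bdd} smaller. The right choice turns out to be
\begin{equation*}
t \;=\; \beta\,(m/\beta)^{2/(\alpha+2)},
\end{equation*}
so that the truncation exponent becomes $(t/\beta)^\alpha = (m/\beta)^{2\alpha/(\alpha+2)} = m\cdot m^{(\alpha-2)/(\alpha+2)}/\beta^{2\alpha/(\alpha+2)}$, matching the target exponent (up to the log factor) in the regime where the argument of $\log(1+\cdot)$ is small. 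Under the hypothesis $m\geq C_\alpha\beta$ with $C_\alpha$ sufficiently large, one also gets $P(I_t)\geq 1/2$ and $m \geq 3\sqrt{\kappa}\,t$, so the prefactor $2/P(I_t)$ in Proposition~\ref{prop:ub-tail} is bounded by $4$ and at least one branch of Theorem~\ref{thm:ub-bdd} applies.

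Next I would split into two cases according to which branch of \eqref{eq:ub-bdd} is active with the above $t$. Direct algebra shows $m\geq \kappa t^2$ iff $m^{(\alpha-2)/(\alpha+2)}/\beta^{2\alpha/(\alpha+2)}\geq \kappa$. In that ``large argument'' case, the first branch gives
\begin{equation*}
\appr(m,\Pbdd{t},\chi^2) \leq \exp\!\left(-m\log\tfrac{m}{t^2}\right) = \exp\!\left(-m\log\tfrac{m^{(\alpha-2)/(\alpha+2)}}{\beta^{2\alpha/(\alpha+2)}}\right),
\end{equation*}
which, since the argument exceeds $\kappa>1$, is equivalent up to constants to $\exp(-c_\alpha m\log(1+m^{(\alpha-2)/(\alpha+2)}/\beta^{2\alpha/(\alpha+2)}))$. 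In the complementary ``small argument'' case $3\sqrt{\kappa}\,t\leq m\leq \kappa t^2$, the second branch yields
\begin{equation*}
\appr(m,\Pbdd{t},\chi^2) \leq \exp\!\left(-\tfrac{\log\kappa}{4\kappa}\tfrac{m^2}{t^2}\right) = \exp\!\left(-c_\alpha\,\tfrac{m^{2\alpha/(\alpha+2)}}{\beta^{2\alpha/(\alpha+2)}}\right),
\end{equation*}
which rewrites as $\exp(-c_\alpha m\cdot m^{(\alpha-2)/(\alpha+2)}/\beta^{2\alpha/(\alpha+2)})$ and, because here $\log(1+x)\asymp x$ for $x\leq \kappa$, again matches the target bound.

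It remains to check that in both cases the truncation contribution $P(I_t^c)\leq 2e^{-(t/\beta)^\alpha}$ is dominated by the compact-support approximation error. This is immediate in Case~2, where the two exponents coincide (that is precisely how $t$ was chosen), and in Case~1 it follows from $x\geq \log x$ applied to $x=m^{(\alpha-2)/(\alpha+2)}/\beta^{2\alpha/(\alpha+2)}\geq \kappa$, which implies $(t/\beta)^\alpha \geq m\log(m/t^2)$. Assembling the bounds through Proposition~\ref{prop:ub-tail} and absorbing the $O(1)$ prefactor into $c_\alpha$ yields \eqref{eq:ub--subW}.

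The main obstacle I anticipate is not any single estimate but identifying the exponent $2/(\alpha+2)$ that simultaneously calibrates the truncation tail and the two regimes of Theorem~\ref{thm:ub-bdd} into the unified form $\log(1+\cdot)$. Related minor bookkeeping includes verifying the condition $m\geq 3\sqrt{\kappa}\,t$ under $m\geq C_\alpha\beta$ (which reduces to $(m/\beta)^{\alpha/(\alpha+2)}\geq 3\sqrt{\kappa}$) and taking $C_\alpha$ large enough to ensure $P(I_t)\geq 1/2$; both are controlled uniformly in $\alpha$ by an appropriate choice of constants.
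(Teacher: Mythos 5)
Your proof is correct and follows essentially the same strategy as the paper's: apply Proposition~\ref{prop:ub-tail} together with Theorem~\ref{thm:ub-bdd} and the sub-Weibull tail bound \eqref{eq:subweibull-tail}, then optimize the truncation radius $t$. The only difference is cosmetic: the paper sets $t = c_\alpha\beta\bigl[m\log\bigl(1+m^{(\alpha-2)/(\alpha+2)}/\beta^{2\alpha/(\alpha+2)}\bigr)\bigr]^{1/\alpha}$ so the tail term directly matches the target, while your simpler power-law choice $t=\beta(m/\beta)^{2/(\alpha+2)}$ arrives at the same bound after the explicit two-case split on the active branch of \eqref{eq:ub-bdd}; both are valid and yield the claimed exponent.
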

\begin{proof}
Denote $\calP=\calP_\alpha(\beta)$. 
By Theorem~\ref{thm:ub-bdd},
when $m\geq 3\sqrt{\kappa}t$, we have that
\begin{align}
        \appr(m,\Pbdd{t},\chi^2)
    \le 
    \exp\qth{-\Theta_\alpha\pth{m\log\pth{1+\frac{m}{t^2}}}}.
    \label{eq:chi2-ub-ptbdd}
\end{align} 
Set $t =c_\alpha  \beta \qth{m\log \pth{1+\frac{m^{\frac{\alpha-2}{\alpha+2}}}{\beta^{\frac{2\alpha}{\alpha+2}}}}}^\frac{1}{\alpha}$ for some $c_\alpha$ only depending on $\alpha$.
From the assumption $m\geq C_\alpha \beta$ for sufficiently large $C_\alpha$, it can be verified that there exists $c_\alpha$ such that $m\geq 3\sqrt{\kappa}t$ and $t\geq \Omega_\alpha(\beta)$ hold. 
By the tail condition \eqref{eq:subweibull-tail}, we have  $\sup_{P\in\calP}
    P(I_t^c)\leq 2\exp(-(\frac{t}{\beta})^\alpha)$, and hence $P(I_t)=\Omega_\alpha(1)$.
    The desired result follows from applying Proposition~\ref{prop:ub-tail} with our choice of $t$ and the upper bounds \eqref{eq:chi2-ub-ptbdd} and \eqref{eq:subweibull-tail}. 
\end{proof}
As a special case, consider the family $\calP_2(\sigma)$ of all 
$\sigma^2$-subgaussian distributions. Then for some universal constant $C$,
  \begin{equation}
  \label{eq:ub-subg}
\appr(m,\calP_2(\sigma),\chi^2) \leq
    \exp\pth{-Cm\log\pth{1+\frac{1}{\sigma}} }.
  \end{equation}

In Theorems~\ref{thm:ub-bdd} and~\ref{thm:ub--subW}, we assume that the order of $m$ is at least proportional to the scaling parameter (\ie, $M$ and $\beta$,  respectively). Otherwise, if the scaling parameter of the mixing distribution gets larger,
 the distribution becomes too heavy-tailed to be well approximated by any $m$-order finite mixture. This intuition is made precise by the lower bound in Theorem~\ref{thm:inapprox}, which implies that consistent approximation is impossible unless $m$ grows proportionally to the scale parameters.

\section{Converse via spectrum of moment matrices}
\label{sec:LB}

In this section, we give lower bounds of the best approximation error by finite mixtures. 
In Section~\ref{sec:lb-general}, we first propose a general framework applicable to any finite mixture in terms of trigonometric moment matrices. This strategy is then applied to the class of sub-Weibull distributions $\calP_\alpha(\beta)$ and compactly supported distributions $\Pbdd{M}$ in Sections~\ref{sec:lb-tail}
 and \ref{sec:lb-unif}, respectively.
For each distribution family $\calP$ of interest, we exhibit a \textit{test distribution} $P\in\calP$ and obtain lower bound of $\appr(\epsilon,P,d)$ and hence also $\appr (m,\calP,d)$.
These test distributions are essentially the hardest to approximate in their respective distribution classes and are amenable to analysis.

Our results indicate that distributions with heavier tails are generally more difficult to approximate using finite mixtures. 
Specifically, the tail behavior itself directly yields a lower bound on the approximation error (see Corollary~\ref{cor:TV-eigen-f} for an explicit result).
In addition, without imposing constraints on the distribution family, no finite mixture of fixed order can achieve consistent approximations. 
To identify the fundamental limit, we consider the regime of large $\beta$ or $M$ and prove lower bound on $m$ such that consistent approximation is possible (see  Theorem~\ref{thm:inapprox} and \eqref{eq:lb-unif-inapprox} of Theorem~\ref{thm:lb-unif}). 

\subsection{General framework}
\label{sec:lb-general}
We state a general procedure to lower bound the approximation error $\appr (m,P,d)$, where $P$ is not finitely supported. At the high level, we relate the approximation error in $\TV$ to that of the trigonometric moments. 
As mentioned in Section \ref{sec:momentmatrix}, 
 the trigonometric moment matrix of $P$ of any degree is full-rank, while that of any $m$-atomic distribution has rank at most $m$. Thus, the question boils down to the smallest eigenvalue of the trigonometric moment matrix of $P$, which can be lower bounded  using classical results in spectral analysis. For this step, we provide two different methods, both of which are needed for proving the main results.

\subsubsection{Lower bound via trigonometric moments} 
The following lemma states a classical result of the best low-rank approximation, known as the Eckart-Young-Mirsky theorem (see, \eg, \cite{mirsky60}).
\begin{lemma}
Let $A\in\complex^{n\times n}$ be a Hermitian matrix with eigenvalues $\sigma_1\geq \ldots\geq\sigma_n\geq 0$. Then for any $k\in[n]$,
\begin{equation*}
    \min _{B\in {\complex}^{n\times n},\ \operatorname{rank}(B)=k}\|A-B\|_{F}=\sqrt{\sigma_{k+1}^{2}+\cdots+\sigma_{n}^{2}}.
\end{equation*}
    \label{lem:eym}
\end{lemma}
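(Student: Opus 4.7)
The plan is to establish matching upper and lower bounds on $\|A-B\|_F$, which together give the Eckart--Young--Mirsky identity. Since $A$ is Hermitian, I would start from its spectral decomposition $A=U\Sigma U^\star$ with $\Sigma=\operatorname{Diag}(\sigma_1,\ldots,\sigma_n)$, and use the unitary invariance of the Frobenius norm together with the fact that the rank is preserved under $B\mapsto U^\star B U$ to reduce the problem to approximating the diagonal matrix $\Sigma$ by a rank-$k$ matrix.

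For the achievability direction, I would exhibit the truncated spectral sum $B^\star=\sum_{i=1}^{k}\sigma_i u_i u_i^\star$, where $u_i$ is the $i$th column of $U$, as the natural rank-$k$ candidate. Expanding against the orthonormal eigenbasis yields $\|A-B^\star\|_F^2=\sum_{i=k+1}^{n}\sigma_i^2$, which settles the upper bound.

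For the converse, I plan a dimension-counting plus trace argument. Since $\operatorname{rank}(B)=k$, the kernel of $B$ has dimension $n-k$; fix an orthonormal basis $x_1,\ldots,x_{n-k}$ of $\ker(B)$ and let $P=\sum_i x_i x_i^\star$ be the orthogonal projector onto its span. Then, using $Bx_i=0$ and Hermiticity of $A$,
\begin{equation*}
\|A-B\|_F^2\;\geq\;\sum_{i=1}^{n-k}\|(A-B)x_i\|^2\;=\;\sum_{i=1}^{n-k}x_i^\star A^2 x_i\;=\;\operatorname{tr}(PA^2).
\end{equation*}
The Ky Fan variational principle (a direct consequence of Courant--Fischer applied to $A^2$) states that $\operatorname{tr}(PA^2)$, minimized over rank-$(n-k)$ orthogonal projectors $P$, equals the sum of the $n-k$ smallest eigenvalues of $A^2$, namely $\sum_{i=k+1}^{n}\sigma_i^2$. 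Combining with the upper bound gives equality.

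The main obstacle is simply invoking the correct variational characterization cleanly; it is a standard result but its proof requires a careful min-max argument. An equivalent and perhaps more compact route is via Weyl's inequality for singular values, $\sigma_{i+j-1}(X+Y)\leq \sigma_i(X)+\sigma_j(Y)$, applied with $X=A-B$, $Y=B$, $j=k+1$: since $\sigma_{k+1}(B)=0$, one gets $\sigma_{i+k}(A)\leq \sigma_i(A-B)$ for $i\in[n-k]$, and summing squares yields the same lower bound. Either path is short once the auxiliary spectral inequality is in hand, so no serious technical difficulty is anticipated.
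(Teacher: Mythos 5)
The paper cites this as the classical Eckart--Young--Mirsky theorem (referencing Mirsky 1960) and gives no internal proof, so there is nothing in the paper to compare against; your task is really to reconstruct a textbook argument, and you do so correctly. The achievability step via the truncated spectral sum $B^\star=\sum_{i\le k}\sigma_i u_iu_i^\star$ is exactly right. Both of your converse routes are valid: the inequality $\|A-B\|_F^2\ge\sum_{i=1}^{n-k}\|(A-B)x_i\|^2$ follows from $\|M\|_F^2=\sum_j\|Me_j\|^2$ for any orthonormal basis (extend $\{x_i\}$ and drop terms), after which $Bx_i=0$ and Hermiticity give $\operatorname{tr}(PA^2)$, and the Ky Fan minimum over rank-$(n-k)$ orthogonal projectors is indeed $\sum_{i>k}\sigma_i^2$ since $A^2$ has eigenvalues $\sigma_i^2$. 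The alternative route via Weyl's singular-value inequality, $\sigma_{i+k}(A)\le\sigma_i(A-B)$ because $\sigma_{k+1}(B)=0$ when $\operatorname{rank}(B)=k$, followed by summing squares, is equally standard and equally correct. No gaps; either version is a complete proof.
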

Recall that $\bfT_m(X)=(t_{k-j}(X))_{j,k=0}^m$ denotes the trigonometric moment matrix for a random variable $X$ of order $m+1$ as in \eqref{eq:eq2-T}.
The following proposition relates the approximation error in TV to the smallest eigenvalue of trigonometric moment matrices. 
\begin{proposition}
\label{prop:TV-eigen}
For any random variable $X\in\reals$ with distribution $P$, 
\begin{equation*}
    \appr(m,P,\TV)\geq  \sup_{\delta>0} \frac{\lambda_{\min}(\bfT_m(\delta X))}{2(m+1)\exp(m^2\delta^2/2)}.
\end{equation*} 
\end{proposition}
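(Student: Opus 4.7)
The plan is to exploit a rank gap: $\bfT_m(\delta X)$ is $(m+1)\times(m+1)$ and positive definite whenever $P$ is not supported on $m$ or fewer points, whereas $\bfT_m(\delta X_m)$ has rank at most $m$ for any $m$-atomic $P_m$. Applying Lemma~\ref{lem:eym} then produces a Frobenius lower bound in terms of $\lambda_{\min}(\bfT_m(\delta X))$, which I will show cannot be too large unless the TV distance of the two mixtures is also large. The factor $e^{m^2\delta^2/2}$ will appear as the price of ``undoing'' the Gaussian convolution when passing between trigonometric moments of the mixing distributions and Fourier differences of the mixtures.

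First I would fix an arbitrary $P_m\in\calP_m$, set $X_m\sim P_m$, and observe that $\bfT_m(\delta X_m)$ is a sum of at most $m$ rank-one positive-semidefinite matrices (one per atom of $P_m$, of the form $\bfv(x)\bfv(x)^\star$ with $\bfv(x) = (1, e^{i\delta x}, \ldots, e^{im\delta x})^\top$), hence has rank at most $m$. Lemma~\ref{lem:eym} then gives
\[
\|\bfT_m(\delta X) - \bfT_m(\delta X_m)\|_F \ge \lambda_{\min}(\bfT_m(\delta X)).
\]
Next I would bound the left-hand side by $\TV(f_P, f_{P_m})$: each entry of the difference is $t_\ell(\delta X) - t_\ell(\delta X_m)$ for some $|\ell| \le m$, and the identity $\Expect[e^{i\xi X}] = e^{\xi^2/2}\,\Expect[e^{i\xi(X+Z)}]$ with $Z\sim N(0,1)$ independent of $X$, together with the standard bound $|\Expect[e^{i\xi(X+Z)}] - \Expect[e^{i\xi(X_m+Z)}]|\le \|f_P - f_{P_m}\|_1 = 2\TV(f_P, f_{P_m})$, yields $|t_\ell(\delta X) - t_\ell(\delta X_m)| \le 2\,\TV(f_P, f_{P_m})\,e^{m^2\delta^2/2}$ entrywise. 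Summing the $(m+1)^2$ squared entries gives the Frobenius upper bound $2(m+1)\,\TV(f_P, f_{P_m})\,e^{m^2\delta^2/2}$. Combining with the low-rank lower bound, solving for TV, and taking the infimum over $P_m$ and the supremum over $\delta$ produces the claim.

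The only real point to watch is the deconvolution factor $e^{m^2\delta^2/2}$, which is intrinsic to the approach: the TV distance of the mixtures only controls low-frequency Fourier differences of $P$ and $P_m$, while the trigonometric moment matrix at scale $\delta$ probes frequencies up to $m\delta$. This forces $\delta$ to be tuned in the downstream applications (Sections~\ref{sec:lb-tail} and~\ref{sec:lb-unif}) so as to balance the decay of $\lambda_{\min}(\bfT_m(\delta X))$ against the exponential penalty; the spectral analysis of moment matrices and the orthogonal-polynomial machinery of Section~\ref{sec:ortho} will enter at that later stage rather than in proving the present proposition.
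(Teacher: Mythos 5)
Your proposal is correct and follows essentially the same route as the paper: the paper also combines Eckart--Young--Mirsky (Lemma~\ref{lem:eym}) with the rank-$\le m$ structure of $\bfT_m(\delta X_m)$, the identity $t_k(\delta X) = e^{k^2\delta^2/2}\,\Expect[e^{ik\delta(X+Z)}]$, and the fact that $|\Expect[e^{i\omega(X+Z)}]-\Expect[e^{i\omega(X_m+Z)}]|\le 2\TV(f_P,f_{P_m})$ (stated there via the variational representation of TV in Lemma~\ref{lem:variation}). The only difference is presentational: you run the chain of inequalities from $\lambda_{\min}$ up to TV, whereas the paper runs it from TV down to $\lambda_{\min}$; and you make explicit the step from the entrywise bound to the Frobenius bound, which the paper leaves implicit.
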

\begin{proof}
    Denote $Z\sim N(0,1)$ and $X_m\sim P_m$, where $P_m\in\calP_m$ is an arbitrary $m$-atomic distribution.      
Set the test function as $f_\omega(x)=\exp(i\omega x)$. 
By the variational representation formula in Lemma~\ref{lem:variation}, we obtain that
    \begin{align*}
    \TV\left(f_{P_{m}} , f_{P}\right)
&\geq \frac{1}{2} \sup _{\omega} \left|\mathbb{E}[f_\omega(X+Z)]-\mathbb{E}[f_\omega(X_m+Z)]\right|\\
&= \frac{1}{2} \sup _{\omega}  e^{-\frac{\omega^2}{2}}\left|\mathbb{E}[f_\omega(X)]-\mathbb{E}[f_\omega(X_m)]\right|.
\end{align*}
Set $\omega\in\delta\mathbb{Z}\triangleq\{k\delta: k\in\mathbb{Z}\}$, where $\delta>0$ is a frequency parameter to be determined. Then  
\begin{align*}
  \TV\left(f_{P_{m}}, f_{P}\right)
&\geq\max_{k\in\mathbb{Z},|k|\leq m}\frac{\left|\mathbb{E}[\exp(ik \delta X_{m})]-\mathbb{E}[\exp(ik\delta X)]\right|}{2\exp(k^2\delta^2/2)}
=\max_{|k|\leq m} \frac{\left|t_k(\delta X_m)-t_k(\delta X)\right|}{2\exp(k^2\delta^2/2)}.
\end{align*}
Note that $t_k(\delta X_m)$ and $t_k(\delta X)$ are entries of $(m+1)\times (m+1)$
Toeplitz matrices $\bfT_m(\delta X_m)$ and $\bfT_m(\delta X)$, respectively.
Since $\operatorname{rank}(\bfT_m(\delta X_m))\leq m$, applying Lemma~\ref{lem:eym} yields
\begin{equation*}
\TV\left(f_{P_{m}}, f_{P}\right)
\geq\frac{\|\bfT_m(\delta X_m)-\bfT_m(\delta X)\|_{F}}{2(m+1)\exp(m^2\delta^2/2)}
\geq  \frac{\lambda_{\min}(\bfT_m(\delta X))}{2(m+1)\exp(m^2\delta^2/2)}. \qedhere
\end{equation*}
\end{proof}

{
\begin{remark}
Proposition~\ref{prop:TV-eigen} is proved via lower bounding \( \TV(f_{P}, f_{P_m}) \) by the difference of trigonometric moments. A related result \cite[Theorem 4.2]{Doss20} lower bounds the divergence by moment differences as \( H^2(f_{P}, f_{Q}) \geq C_k \|\bfm_{2k-1}(P)-\bfm_{2k-1}(Q)\|_2^2 \) for any \( P, Q \in \Pbdd{1} \cap \calP_k \), where \( \bfm_{2k-1}(P) \triangleq (m_1(P), \dots, m_{2k-1}(P)) \) and \( C_k = \exp(-\Theta(k \log k)) \). However, this result require both $f_P$ and $f_Q$ to be finite mixtures. In contrast, the lower bound on \(\TV\) in Proposition \ref{prop:TV-eigen} applies to continuous mixture $f_P$.
\end{remark}

}
\subsubsection{Spectral analysis of trigonometric moment matrices}
\label{sec:spectral}
To lower bound the smallest eigenvalue of $\bfT_m(\delta X)$, we propose two parallel approaches, by applying the techniques of  \textit{wrapped density} and \textit{orthogonal expansion}, respectively.

\paragraph{Wrapped density.} 
    Given a density function $g$ on  $\reals$,  define the corresponding \textit{wrapped density} $g^{\wrap}$ on $[0,2\pi]$ as 
\begin{equation*}
    g^{\wrap}(\theta)\triangleq\begin{cases}
        \sum_{j=-\infty}^\infty g(\theta-2\pi j), & \theta\in[0,2\pi];\\
        0, &\text{otherwise}.
    \end{cases}
\end{equation*}
Thus, if $Y$ has density $g$, then $Y^{\wrap} \triangleq Y \mod 2 \pi \integers$ has density $g^{\wrap}$.  By definition, we have that $t_k(Y)=t_k(Y^{\wrap})$ for any $k\in\naturals$, and then $\bfT_m(Y^{\wrap})=\bfT_m(Y)$. Furthermore, the classical theory for the spectrum of Toeplitz matrices shows that the eigenvalues of $\bfT_m(Y^{\wrap})$ are bounded by the minimum and maximum of the wrapped density (cf.~e.g.~\cite[Eq.~(1.11)]{szego1953}), that is,
\begin{align}
    \lambda_{\min}(\bfT_m(Y))  = \lambda_{\min}(\bfT_m(Y^{\wrap})) 
    \geq \inf \{2\pi g^{\wrap}(\theta) : \theta\in [0,2\pi]\}
    \label{eq:eigen-lb-0}.
\end{align}
To see this, writing  $\bfT\equiv \bfT_m(Y^{\wrap})$, the smallest eigenvalue is expressed as the minimum  Rayleigh quotient 
   \begin{equation}
    \label{eq:rayleigh}
        \lambda_{\min}(\bfT)=\min_{\bfx\in\complex^{m+1}\setminus\{\mathbf{0}\}} \frac{ \bfx^\star \bfT \bfx}{\|\bfx\|^2},
    \end{equation}
For any $\bfx=(x_0,\ldots,x_m)^\top \in \complex^{m+1}$, let $\pi_m(w)=\sum_{j=0}^m x_jw^j$. 
Let $\kappa \triangleq \inf_{\theta\in[0,2\pi]} g^{\wrap}(\theta)$. Then
\begin{equation}
    \bfx^\star \bfT \bfx 
=  \Expect[|\pi_m(e^{i Y^{\wrap}})|^{2}] =  
\int_0^{2\pi} g^{\wrap}(\theta)  |\pi_m(e^{i \theta})|^{2} \diff\theta\geq \kappa \int_0^{2\pi}  |\pi_m(e^{i \theta})|^{2} \diff\theta  = 2\pi\kappa \cdot \bfx^\star  \bfx,
\label{eq:eigen-lb-0why}
\end{equation}
where the last step applies the 
orthogonality of the monomials on the unit circle $\int_0^{2\pi} d\theta e^{i(j-k) \theta} = 2\pi \delta_{jk}$.

{
As a corollary of Proposition~\ref{prop:TV-eigen}, we provide an explicit lower bound based on the tail behavior of the mixing distribution.
This result highlights that  distributions with heavier tails are more difficult to approximate.
We will formalize this intuition in the following subsections.
\begin{cor}
\label{cor:TV-eigen-f}
Let $X\sim P$ with density function $f$ on $\reals$. Then,
\begin{equation}
    \label{eq:TV-eigen-f}
    \appr(m,P,\TV) \geq \sup_{\delta>0} \frac{\pi\inf_{0\leq\theta\leq 2\pi/\delta}f(\theta)}{(m+1)\delta\exp(m^2\delta^2/2)} .
\end{equation}
Moreover, if $f$ is symmetric around zero, then the infimum in  the numerator may be take over 
$0\leq\theta\leq \pi/\delta$.
\end{cor}
\begin{proof}
    Denote $Y=\delta X \sim g$ with density function $g(\theta)=\frac{1}{\delta}f(\frac{\theta}{\delta})$. Applying \eqref{eq:eigen-lb-0} with the fact that $g^\wrap(\theta)\geq \inf_{\theta\in[0,2\pi)}g(\theta)$, we have $\lambda_{\min}(\bfT_m(Y))\geq 2\pi\inf_{\theta\in[0,2\pi)}g(\theta)$. 
    Particularly, for symmetric $f$, $\lambda_{\min}(\bfT_m(Y))\geq 2\pi\inf_{\theta\in[0,\pi)}g(\theta)$.
    Applying Proposition~\ref{prop:TV-eigen}, \eqref{eq:TV-eigen-f} then follows.
\end{proof}
}

\paragraph{Orthogonal expansion.} 
Orthogonal expansion serves as an alternative approach to lower bound the smallest eigenvalue of $\bfT_m(\delta X)$.
Let $\{\varphi_n\}$ be the orthonormal polynomials on the unit circle associated with $Y=\delta X$, that is,
\begin{equation*}
    \Expect\qth{ \varphi_j(e^{iY})\overline{\varphi_k(e^{iY})} }= \delta_{jk}, \quad j,k\geq 0.
\end{equation*}
 Let $\bfR_{m,\delta}=(R_{nj})_{n,j=0}^m$ be the $(m+1)\times(m+1)$ lower triangular matrix that encodes the coefficients of $\{\varphi_k\}_{k=0}^{m}$, namely, $\varphi_n(t) = \sum_{j=0}^n R_{nj} t^j$ and $R_{nj}=0$ if $j> n$. We call
 $\bfR_{m,\delta}$ the \textit{associated coefficient matrix} of $\{\varphi_k\}$ of order $m+1$.

  The next proposition gives a lower bound of the minimum eigenvalue based on the method in \cite{Chen1999SmallEO}. In a nutshell, we expand the quadratic form in the Rayleigh quotient  under the basis of orthogonal polynomials, and bound the norm of the coefficient matrix. 
\begin{proposition}
    \label{prop:ortho}
For any $X\sim P$ and  $\delta>0$, with the above notations,
\begin{equation*}
    \lambda_{\min}(\bfT_m(\delta X)) \geq \frac{1}{\|\bfR_{m,\delta}\|_F^2}.
\end{equation*}
   
\end{proposition}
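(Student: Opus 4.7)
The plan is to follow the Rayleigh-quotient strategy already used in the wrapped-density argument \eqref{eq:eigen-lb-0why}, but instead of bounding $|\pi_m(e^{iY})|^{2}$ pointwise I would expand $\pi_m$ in the orthonormal basis of polynomials of degree $\leq m$ furnished by $\{\varphi_0,\dots,\varphi_m\}$. Writing $Y=\delta X$ and using \eqref{eq:rayleigh}, for any $\bfx=(x_0,\dots,x_m)^\top\in\complex^{m+1}\setminus\{\mathbf{0}\}$ and $\pi_m(w)=\sum_{j=0}^m x_j w^j$, I have
\begin{equation*}
\bfx^\star \bfT_m(\delta X)\bfx \;=\; \Expect\bigl[|\pi_m(e^{iY})|^{2}\bigr].
\end{equation*}
Since the $\varphi_n$ have degree exactly $n$ (their leading coefficients $R_{nn}=\kappa_n$ are nonzero), they span the space of polynomials of degree $\leq m$, so I can write $\pi_m=\sum_{n=0}^m c_n\varphi_n$ uniquely. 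Orthonormality of $\{\varphi_n\}$ under the law of $e^{iY}$ then gives $\Expect[|\pi_m(e^{iY})|^{2}]=\sum_{n=0}^m|c_n|^{2}=\|\bfc\|^{2}$.

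Next, I would match coefficients of $t^{j}$ in $\pi_m(t)=\sum_{n=0}^m c_n \varphi_n(t)=\sum_{n,j} c_n R_{nj}t^{j}$, which yields $x_j=\sum_{n=0}^{m} R_{nj}c_n$, i.e.\ $\bfx=\bfR_{m,\delta}^{\top}\bfc$. Therefore
\begin{equation*}
\|\bfx\|^{2}=\bfc^\star \bfR_{m,\delta}\bfR_{m,\delta}^{\star}\bfc \;\leq\; \|\bfR_{m,\delta}\|^{2}\,\|\bfc\|^{2} \;\leq\; \|\bfR_{m,\delta}\|_F^{2}\,\|\bfc\|^{2},
\end{equation*}
using the elementary inequality $\|A\|\leq \|A\|_F$. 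Combining these two displays and plugging into \eqref{eq:rayleigh} gives
\begin{equation*}
\frac{\bfx^\star \bfT_m(\delta X)\bfx}{\|\bfx\|^{2}}\;=\;\frac{\|\bfc\|^{2}}{\|\bfx\|^{2}}\;\geq\;\frac{1}{\|\bfR_{m,\delta}\|_F^{2}},
\end{equation*}
and minimizing over $\bfx\neq \mathbf{0}$ yields the claimed bound.

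There is no real obstacle: the entire argument is the change-of-basis identity $\bfx=\bfR_{m,\delta}^{\top}\bfc$ combined with orthonormality. The only point worth double-checking is that the $\varphi_n$ really do form a basis of degree-$\leq m$ polynomials, which follows from the monic version $\varPhi_n$ of \eqref{eq:opuc} having degree exactly $n$ and the normalization $\varphi_n=\kappa_n \varPhi_n$ being well defined because $P$ (and hence the law of $e^{iY}$) is not finitely supported, ensuring $\kappa_n>0$.
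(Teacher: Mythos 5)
Your proof is correct and follows the same route as the paper's: expand $\pi_m$ in the orthonormal basis $\{\varphi_n\}$, use orthonormality to identify $\bfx^\star \bfT_m(\delta X)\bfx=\|\bfc\|^2$, relate $\bfx$ to $\bfc$ via the coefficient matrix, and bound the resulting operator norm by the Frobenius norm. One cosmetic slip: with $\bfx=\bfR_{m,\delta}^\top\bfc$ one has $\|\bfx\|^2=\bfc^\star\overline{\bfR}_{m,\delta}\bfR_{m,\delta}^\top\bfc$ rather than $\bfc^\star\bfR_{m,\delta}\bfR_{m,\delta}^\star\bfc$, but since $\|\overline{\bfR}_{m,\delta}\|=\|\bfR_{m,\delta}\|\le\|\bfR_{m,\delta}\|_F$ the bound is unaffected.
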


\begin{proof} 
   For any $\bfx=(x_0,\ldots,x_m)^\top \in \complex^{m+1} \backslash \{0\}$, 
 let $\pi_m(w)=\sum_{j=0}^m x_jw^j$ with $w=w(z)=e^{i\delta z}$.
Expand $\pi_m$ under the orthogonal basis $\{\varphi_n\}$ as 
$\pi_m(w)=\sum_{k=0}^m c_k\varphi_k(w)$.
Letting $\bfc=(c_0,\ldots,c_m)^\top$ and 
$\bfT\equiv  \bfT_m(\delta X)$, the orthogonality of $\{\varphi_k\}$ implies that 
\begin{equation*}
\|\bfc\|^2=\Expect\left[|\pi_m(e^{i\delta X})|^2\right] =\bfx^\star \bfT \bfx.
\end{equation*}
Denote $\bfR\equiv\bfR_{m,\delta}$
as the coefficient matrix of $\{\varphi_k\}$ of order $m+1$.
By comparing the coefficients, we have that $\bfx^\top = \bfc^\top \bfR$, which implies that  $\|\bfx\|\leq \|\bfc\| \|\bfR\|\leq \|\bfc\|\|\bfR\|_F$.
Finally, applying \eqref{eq:rayleigh}, we have
\begin{equation*}
    \lambda_{\min}(\bfT)\geq  \frac{\|\bfc\|^2}{\|\bfc\|^2\|\bfR\|_F^2} =\frac{1}{\|\bfR\|_F^2}.
    \qedhere
\end{equation*}
\end{proof}

Proposition~\ref{prop:ortho} reduces the task of lower bounding the smallest eigenvalue to upper bounding  ${\|\bfR\|_F^2}$,
which is feasible if the orthogonal polynomial under 
the test distribution $P$ has an explicit representation. Alternatively, as will be shown in Section~\ref{sec:lb-unif},  we can  control $\|\bfR\|_F^2$ by the Szeg\"o recurrence formula \cite[Sec.1.5]{BarrySimon2005a}. 
Finally, the error lower bound is obtained by optimizing the frequency parameter $\delta$.

In the next two subsections, we sketch the application of this approach to uniform and Gaussian distributions, the associated orthogonal polynomials of which are the  standard monomials and the Rogers-Szeg\"o polynomials (see Appendix~\ref{app:ortho}), respectively. 
The wrapped density approach only uses the density lower bound, while the orthogonal expansion approach requires understanding the coefficients of the  orthogonal polynomials; nevertheless, as we will see in the case of compactly supported distributions, the latter approach yields optimal results while the former yields a trivial lower bound. 

\subsection{Distribution families under tail conditions}
\label{sec:lb-tail}
In this subsection, we establish the lower bound for the class $\calP_\alpha(\beta)$ of
sub-Weibull distributions (recall \eqref{eq:Pab})  by applying the wrapped density approach in Section~\ref{sec:spectral}.
We focus on the case where $\alpha$ is fixed and the scale parameter $\beta$ may vary with $m$. 
Let $X_{\alpha,\beta} \sim P_{\alpha,\beta}$ with the density function 
\begin{equation}
     \label{eq:f_ab}
    f_{\alpha,\beta}(x)=\frac{C_\alpha}{\beta}\exp\qth{-\left|{\frac{x}{\beta}}\right|^\alpha},
\end{equation}
where $C_\alpha=\pth{\int_{-\infty}^{\infty} \exp\pth{-|x|^\alpha} \diff x}^{-1}$ is the normalization constant. We establish the following lower bound of the approximation error measured by total variation distance.
\begin{theorem}
    \label{thm:lb-tail-1}
    For any $m\in\naturals$ and  $\alpha,\beta>0$,
    \begin{equation*}
        \appr(m,P_{\alpha,\beta},\TV) \geq \beta^{-\frac{2}{2+\alpha}}m^{-\frac{\alpha}{2+\alpha}}
        \exp\qth{-\Theta_\alpha\pth{\pth{\frac{m}{\beta}}^{\frac{2\alpha}{2+\alpha}}}}.
    \end{equation*}
\end{theorem}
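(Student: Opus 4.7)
\medskip

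\noindent\textbf{Proof plan for Theorem~\ref{thm:lb-tail-1}.}

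The plan is to apply the general machinery of Section~\ref{sec:lb-general}, specifically Proposition~\ref{prop:TV-eigen} combined with the wrapped density bound \eqref{eq:eigen-lb-0}, to the test distribution $P_{\alpha,\beta}$ with density $f_{\alpha,\beta}$ defined in \eqref{eq:f_ab}. By Proposition~\ref{prop:TV-eigen},
\begin{equation*}
\appr(m,P_{\alpha,\beta},\TV)\geq \sup_{\delta>0} \frac{\lambda_{\min}(\bfT_m(\delta X_{\alpha,\beta}))}{2(m+1)\exp(m^2\delta^2/2)},
\end{equation*}
so the task reduces to choosing a good frequency $\delta>0$ and lower bounding $\lambda_{\min}(\bfT_m(\delta X_{\alpha,\beta}))$.

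For the eigenvalue bound I would use the wrapped density approach. The random variable $Y=\delta X_{\alpha,\beta}$ has density $g(y)=\frac{C_\alpha}{\delta\beta}\exp(-|y/(\delta\beta)|^\alpha)$, which is symmetric about the origin and decreasing in $|y|$. For any $\theta\in[0,2\pi]$, there exists an integer $j$ with $|\theta-2\pi j|\leq \pi$, and since $g$ is unimodal and even,
\begin{equation*}
g^{\wrap}(\theta)\geq g(\theta-2\pi j)\geq g(\pi)=\frac{C_\alpha}{\delta\beta}\exp\!\left(-\left(\frac{\pi}{\delta\beta}\right)^\alpha\right).
\end{equation*}
Plugging this into \eqref{eq:eigen-lb-0} yields
\begin{equation*}
\lambda_{\min}(\bfT_m(\delta X_{\alpha,\beta}))\geq \frac{2\pi C_\alpha}{\delta\beta}\exp\!\left(-\left(\frac{\pi}{\delta\beta}\right)^\alpha\right).
\end{equation*}

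Combining with Proposition~\ref{prop:TV-eigen} gives
\begin{equation*}
\appr(m,P_{\alpha,\beta},\TV)\gtrsim_\alpha \frac{1}{(m+1)\,\delta\beta}\exp\!\left(-\frac{m^2\delta^2}{2}-\left(\frac{\pi}{\delta\beta}\right)^\alpha\right).
\end{equation*}
The two terms in the exponent are balanced by choosing $\delta\asymp_\alpha m^{-\frac{2}{2+\alpha}}\beta^{-\frac{\alpha}{2+\alpha}}$, which makes both $m^2\delta^2$ and $(\pi/(\delta\beta))^\alpha$ of order $(m/\beta)^{\frac{2\alpha}{2+\alpha}}$, and makes the polynomial prefactor $\frac{1}{(m+1)\delta\beta}$ of order $m^{-\frac{\alpha}{2+\alpha}}\beta^{-\frac{2}{2+\alpha}}$. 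This is exactly the form claimed in the theorem.

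The calculations are essentially mechanical once the wrapped density bound is in place; the only substantive step is justifying the lower bound $g^{\wrap}\geq g(\pi)$, which follows cleanly from symmetry and monotonicity of $f_{\alpha,\beta}$. I expect no real obstacle: the argument uses no assumption relating $m$ and $\beta$, which matches the statement of the theorem (valid for arbitrary $m,\beta>0$), and the balancing of the two exponential terms recovers the exponent $\frac{2\alpha}{2+\alpha}$ that appears in Theorem~\ref{thm:main-tail}.
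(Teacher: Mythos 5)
Your proposal is correct and follows essentially the same route as the paper: apply Proposition~\ref{prop:TV-eigen} together with the wrapped-density bound \eqref{eq:eigen-lb-0} to the test distribution $P_{\alpha,\beta}$, and balance $m^2\delta^2$ against $(\pi/(\delta\beta))^\alpha$ by choosing $\delta \asymp_\alpha m^{-2/(2+\alpha)}\beta^{-\alpha/(2+\alpha)}$. The only (immaterial) difference is that you lower bound $g^{\wrap}$ by $g(\pi)$ using symmetry, whereas the paper uses $\inf_{[0,2\pi)}g = g(2\pi)$; this improves the constant by a bounded factor, an observation the paper itself records in a footnote.
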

\begin{proof}
Denote $X \sim P_{\alpha,\beta}$ and $Y=\delta X \sim g$,
where $g(\theta)=\frac{1}{\delta}f_{\alpha,\beta}(\frac{\theta}{\delta})$. 
Note that $f_{\alpha,\beta}$ is symmetric and decreasing in $(0,\infty)$. 
Applying \eqref{eq:TV-eigen-f} with $\delta=\pth{\frac{1}{m}}^{\frac{2}{2+\alpha}} \pth{\frac{2\pi}{\beta}}^{\frac{\alpha}{2+\alpha}}$, we have
\begin{align*}
    \appr(m,P_{\alpha,\beta},\TV) &\geq \frac{\pi f_{\alpha,\beta}\pth{\frac{2\pi}{\delta}}}{2m\delta\exp(m^2\delta^2/2)}
    \geq  C_{1} \beta^{-\frac{2}{2+\alpha}}m^{-\frac{\alpha}{2+\alpha}}
    \exp\qth{ -C_{2}  \pth{\frac{m}{\beta}}^{\frac{2\alpha}{2+\alpha}} },
\end{align*}
for some constants $C_{1},C_{2}$ depending only on $\alpha$.
\end{proof}
As concrete examples, we consider the case where $P$ is the Laplace  or Gaussian distribution. Consider the $\operatorname{Laplace}(\lambda)$ distribution with density $f(x)=(2\lambda)^{-1} e^{-|x|/\lambda}$. Applying Theorem~\ref{thm:lb-tail-1} with $(\alpha,\beta)=(1,\lambda)$, it follows that
    \begin{equation*}
     \appr(m,\operatorname{Laplace}(\lambda),\TV) 
     \geq\frac{\pi}{4\sqrt[3]{2\pi m\lambda^2}}\exp\qth{-\pth{\frac{2\pi m}{\lambda}}^{\frac{2}{3}}},
    \end{equation*}    
    where the explicit constants are obtained from \eqref{eq:TV-eigen-f}. 
    Similarly, for the $N(0,\sigma^2)$ distribution with $(\alpha,\beta)=(2,\sqrt{2}\sigma)$,
   \begin{equation}
    \label{eq:lb-gaussian}
    \appr(m,N(0,\sigma^2),\TV) 
    \geq\frac{1}{2\sqrt{2m\sigma}}\exp\pth{-\frac{\pi m}{\sigma}}.
   \end{equation} 
   We finally remark that Theorem~\ref{thm:lb-tail-1} also applies to lower bound  $\appr(m,\calP_{\alpha}(\beta),\TV)$ 
   by noting that $P_{\alpha,\beta}\in\calP_{\alpha}(k_\alpha\beta)$ for $k_\alpha=(1-2^{-\alpha})^{-\frac{1}{\alpha}}$, $\alpha>0$.
\begin{remark}
\label{rem:lb-1}
Alternatively, \eqref{eq:lb-gaussian} can be shown 
using Proposition \ref{prop:ortho} and orthogonal polynomials for the wrapped Gaussian distribution. This second proof is given in Appendix~\ref{app:lb--subg}. 
\end{remark}

Next, we consider the regime when the scale parameter $\beta$ diverges. In this case, more components are needed to achieve a prescribed  approximation accuracy. Hence, it is natural to conjecture that consistent approximation is impossible unless $m$ is at least proportional to the scale parameter. This however is not captured by Theorems~\ref{thm:lb-tail-1} where the lower bound still converges to zero when $m\asymp\beta$ due to the  polynomial term in $m$.

The next result makes the intuition precise in a strong sense for the given test distributions. 
Instead of applying Proposition \ref{prop:TV-eigen} and bounding the minimum eigenvalue, we execute a different but more straightforward proof strategy: 
If the mixing distribution $P$ is smooth enough with a large $\beta$, 
then $f_P$ can be flat over a large region. 
In comparison, when $m$ is too small, $f_{P_m}$  inevitably has many modes and thus cannot approximate $f_P$ consistently.
\begin{theorem}
    \label{thm:inapprox}
    Let $P_{\alpha,\beta}$ and $C_\alpha$   
        be defined in \eqref{eq:f_ab}. 
Denote $\tilde{C}_\alpha={\sqrt{2\pi }C_\alpha}$. For any $\beta\geq 2\tilde{C}_\alpha m$, 
    \begin{equation}
        \label{eq:inapprox-tail}
        \appr(m,P_{\alpha,\beta},\TV)\geq 1- 5 \frac{\tilde{C}_\alpha m}{\beta} \sqrt{\log \frac{\beta}{\tilde{C}_\alpha m}}.
    \end{equation}
\end{theorem}
\begin{proof}
For $P=P_{\alpha,\beta}$, note that 
\begin{equation*}
    f_{P}(x)=\int \phi(x-\theta) f_{\alpha,\beta}(\theta)\diff\theta \leq \frac{C_\alpha}{\beta}\int \phi(x-\theta) \diff\theta = \frac{C_\alpha}{\beta}.
\end{equation*}
Fix any $m$-atomic distribution $P_m=\sum_{i=1}^m w_i \delta_{\theta_i}$. Define 
\begin{equation*}
R_i \triangleq \sth{x: w_i \phi(x-\theta_i)\ge \frac{C_\alpha}{\beta} }
\end{equation*}
as the peak region of the $i\Th$ mixture component. 
Denote $\eta_i=\eta(w_i)\triangleq\sqrt{2\log\frac{\beta w_i}{\tilde{C}_\alpha}}$ if $w_i\ge \frac{\tilde{C}_\alpha}{\beta}$.
Then, $R_i = [\theta_i - \eta_i, \theta_i + \eta_i ]$ 
if $w_i\ge \frac{\tilde{C}_\alpha}{\beta}$, and $R_i=\emptyset$ otherwise.
Let $F_{m}$, $F$, and $G_\theta$ be the probability measures corresponding to densities $f_{P_{m}}$, $f_P$, and $\phi(\cdot - \theta)$, respectively.
Define $R\triangleq \cup_{i=1}^m R_i$. By the union bound, 
\begin{equation*}
    \TV\left(f_{P_{m}} , f_{P}\right)\ge  F_{m}(R) - F(R) \ge \sum_{i=1}^m w_i G_{\theta_i}(R_i) - \sum_{i=1}^m F(R_i).
\end{equation*}
Since $f_{P} \leq \frac{C_\alpha}{\beta}$, we have 
$R_i\subseteq \{x: w_i \phi(x-\theta_i) \ge f_{P}(x) \}$.
Consequently, $w_i G_{\theta_i}(R_i) - F(R_i)\ge 0$ for all $i\in [m]$. Consider the following set of indices of mixture components with large weights
\begin{equation*}
\calI \triangleq \sth{i\in[m]:  w_i \ge \frac{2\tilde{C}_\alpha}{\beta}}.
\end{equation*}
Note that our assumption $\beta\geq 2\tilde{C}_\alpha m$ implies that $\{i\in[m]:  m w_i \ge 1\}\subseteq\calI$. Hence, by the pigeonhole principle, $\calI$ is nonempty.
For each $i\in \calI$, we have $R_i = [\theta_i - \eta_i, \theta_i + \eta_i ]$
and $\eta_i\geq \sqrt{2\log 2}$. It follows that
\begin{align}
    \label{eq:inapprox-prf-2}
    w_i G_{\theta_i}(R_i) \!-\! F(R_i)
\!\ge\! w_i\pth{1 \!-\! 2\Phi_c\pth{\eta_i}} \!-\! \frac{2\eta_i C_\alpha}{\beta}
\!\ge\! w_i\!\qth{1 \!-\! \frac{2C_\alpha}{\beta w_i} \! \pth{\frac{1}{\eta_i}\!+\!\eta_i\!}\!}, 
\end{align}
where the second inequality used $\Phi_c(\eta_i)\triangleq \int_{\eta_i}^\infty \phi(u)du\le \frac{1}{\eta_i}\phi(\eta_i)$.
Then,
\begin{align*}
\TV\left(f_{P_{m}} , f_{P}\right)
& \ge \sum_{i\in\calI} w_i G_{\theta_i}(R_i) - F(R_i)\\
& \stepa{\ge} 1- \sum_{i\not\in\calI} w_i - \sum_{i\in\calI} \frac{4C_\alpha}{\beta}\eta(w_i)\\
& \stepb{\ge} 1 - m\frac{2\tilde{C}_\alpha }{\beta} - \frac{4C_\alpha}{\beta}|\calI|\eta(|\calI|^{-1})\\
& = 1 - \frac{2\tilde{C}_\alpha m}{\beta} - \frac{4}{\sqrt{\pi}}\frac{\tilde{C}_\alpha|\calI|}{\beta} \sqrt{\log\frac{\beta}{\tilde{C}_\alpha |\calI|}}\\
& \stepc{\ge} 1 - \frac{2\tilde{C}_\alpha m}{\beta} - \frac{4}{\sqrt{\pi}}\frac{\tilde{C}_\alpha m}{\beta} \sqrt{\log\frac{\beta}{\tilde{C}_\alpha m}},
\end{align*}
where (a) follows from \eqref{eq:inapprox-prf-2}, (b) uses the definition of $\calI$
and the concavity of $x\mapsto \sqrt{\log x}$ on $x\geq 1$, 
and (c) holds since $\frac{\beta}{\tilde{C}_\alpha |\calI| }\geq \frac{\beta}{\tilde{C}_\alpha m }\geq 2$ and
$\frac{\sqlog{x}}{x}$ is decreasing in $x\geq \sqrt{e}$.
Consequently, 
\begin{equation*}
    \TV\left(f_{P_{m}} , f_{P}\right) \ge 1- \pth{\frac{2}{\sqlog 2}+\frac{4}{\sqrt{\pi}}} \frac{\tilde{C}_\alpha m}{\beta} \sqrt{\log \frac{\beta}{\tilde{C}_\alpha m}}\geq 1 - 5 \frac{\tilde{C}_\alpha m}{\beta} \sqrt{\log \frac{\beta}{\tilde{C}_\alpha m}}.
\end{equation*}
The lower bound holds uniformly for all $P_m\in\calP_m$, and hence the result \eqref{eq:inapprox-tail} follows.
\end{proof}

\subsection{Compactly supported distributions}
\label{sec:lb-unif}
We focus on the class $\Pbdd{M}$ of all distributions supported on $[-M,M]$. 
Theorem~\ref{thm:lb-unif} shows that the desired lower bound can be obtained by considering the uniform distribution.

\begin{theorem}
  \label{thm:lb-unif}
  There exists universal constants $C_1$ and $C^\prime$ such that for any $m\geq C^\prime (M\vee 1)^2$,
\begin{equation}
    \label{eq:lb-unif-1}
    \appr(m,\Unif[-M,M],\TV)\geq
    \exp\pth{-C_1m\log\pth{\frac{m}{M^2}}},
\end{equation}
and it holds for any $M>0$, $m\in\naturals$ that 
\begin{equation}
    \label{eq:lb-unif-2}
\appr(m,\Unif[-M,M],\TV) \geq
\frac{1}{4m}\exp\left(-\frac{\pi^2 m^2}{2M^2}\right).
  \end{equation}
Moreover, for $C=\sqrt{\frac{\pi}{2}}$ and any $M\geq \sqrt{2\pi}m$,  
\begin{equation}
    \label{eq:lb-unif-inapprox}
    \appr(m,\Unif[-M,M],\TV)\geq 1- 5 \frac{Cm}{M} \sqrt{\log \frac{M}{C m}}.
\end{equation}
\end{theorem}

We give some interpretation of Theorem~\ref{thm:lb-unif} as follows. 
\begin{itemize}
    \item \emph{Non-vanishing error.}
When $m\leq o(M)$, \eqref{eq:lb-unif-inapprox} gives a compact support analogue of Theorem~\ref{thm:inapprox}, demonstrating that $m$-component mixtures cannot yield any non-trivial approximation as the TV error is $1-o(1)$. The derivation of \eqref{eq:lb-unif-inapprox} is essentially the same as those used in Theorem~\ref{thm:inapprox}. 
\item \emph{Polynomially small error.} When $M\lesssim m\lesssim M^2$, \eqref{eq:lb-unif-2} dominates the lower bound. The lower bound can be directly obtained by applying Proposition \ref{prop:TV-eigen} to  the trigonometric moment matrix of the uniform distribution. 

\item \emph{Exponentially small error.} When $m\gtrsim M^2$, \eqref{eq:lb-unif-1} becomes effective. 
The proof is more challenging,
which requires a small frequency parameter $\delta$ in Proposition \ref{prop:TV-eigen}.
For $X$ supported on $[-M,M]$, $\delta X\mod 2\pi \integers$ is the same as $\delta X$ for small $\delta$.
As such, the minimum value of the wrapped density function is zero, which only yields a trivial lower bound. 

For this result, we first lower bound the approximation error in $\chi^2$, which can be related to, via the variational representation of the $\chi^2$-divergence, the smallest eigenvalue of a certain \textit{weighted moment matrix}. 
Then, we convert the $\chi^2$-divergence lower bound to a $\TV$ lower bound using a truncation argument for Gaussian mixtures.
\end{itemize}

\begin{proof}[Proof of Theorem~\ref{thm:lb-unif}]
The proof of \eqref{eq:lb-unif-1} is provided in Appendix~\ref{app:lb-unif-1} -- see Theorem \ref{thm:lb-unif-1} therein.
    Let $X\sim P=\Unif[-M,M]$
    and $\delta= \frac{\pi}{M}$. By Corollary~\ref{cor:TV-eigen-f},
\begin{align*}
\appr(m,P,\TV) \geq  \frac{1}{2(m+1)\exp(m^2\delta^2/2)}\geq \frac{1}{4m}\exp\left(-\frac{\pi^2 m^2}{2M^2}\right).
\end{align*}

The proof of \eqref{eq:lb-unif-inapprox} is essentially the same as the proof of Theorem~\ref{thm:inapprox}.
Note that $f_P(x)=\frac{1}{2M}\int_{-M}^M \phi(x-\theta)\diff\theta\leq \frac{1}{2M}. $ 
For any 
 $P_m=\sum_{i=1}^m w_i \delta_{\theta_i}\in\calP_m$, define 
\[
R_i = \{x: w_i \phi(x-\theta_i)\ge (2M)^{-1} \},\quad 
\calI \triangleq \{i\in[m]: w_i \ge \sqrt{2\pi}M^{-1} \}.
\]
 The result is proved
 via a similar argument to that of Theorem~\ref{thm:inapprox}.
\end{proof}

For \( m \gtrsim M^2 \), 
the spectral analysis framework remains applicable if we consider certain compactly supported distribution other than the uniform distribution. 
This framework will be extended to multiple dimensions in Section~\ref{sec:extension-other}.
To illustrate this, let $b\in (0,\pi]$ and $\tilde{P}$ be the distribution supported on $[-b,b]$ with the density function  
\begin{equation}
    \label{eq:f_test_unif}
    \tilde{f}(\theta)=
    \begin{cases}
    \frac{\sqrt{\sin^2(b/2)-\sin^2(\theta/2)}\cos(\theta/2)}{\pi \sin^2(b/2)},& \theta\in[-b,b],\\
    0,& \text{otherwise.}
\end{cases}
\end{equation}
Let $X\sim \tilde P$ and $\delta=\frac{b}{M}$. Then $\frac{X}{\delta}$ is supported on $[-M,M]$.
Let $P$ denote the distribution of $\frac{X}{\delta}$. 
The following result holds:
\begin{proposition}
    \label{prop:lb-unif-arc}
    Let $P$ be defined as above.
    Then, there exists $b\in (0,\pi]$ and universal constants $C_1$, $C^\prime$ such that when $m\geq C^\prime M^2$,
\begin{equation}
    \label{eq:lb-unif-arc}
    \appr(m,P,\TV)\geq
    \exp\pth{-C_1m\log\pth{\frac{m}{M^2}}}.
\end{equation}
\end{proposition}
To prove Proposition~\ref{prop:lb-unif-arc}, we connect the orthogonal system on an arc of the unit circle to the orthogonal system on an
interval of the real line (see Lemma~\ref{lem:arc}). Specifically, 
the orthonormal polynomials associated with $\tilde{P}$ can be related to the (scaled) Chebyshev polynomials of the second kind. Propositions~\ref{prop:TV-eigen} and~\ref{prop:ortho} can then be applied, where the explicit form of the Chebyshev polynomials is utilized to bound the norm of the associated coefficient matrix.
 The complete proof is presented in Appendix~\ref{app:lb-unif-arc}.

\section{Applications and extensions}
\label{sec:discussion}
In this section, we discuss the applications of our results in information theory and nonparametric statistics. Additionally, we show how our findings can be extended to a wider range of distribution families and mixture models.  

\subsection{Constellation design for Gaussian channels}
\label{sec:constellation}
The finite mixture approximation problem is closely related to the input constellation design for Gaussian channels. Recall that the  Gaussian distribution is capacity-achieving for Gaussian channels under average power constraint. Under an additional input cardinality constraint of at most \( m \), the capacity gap is characterized by \( \appr(m, N(0, \sigma^2), \KL) \)  (see \eqref{eq:Cm}). Consequently, our proposed approximator serves as an \( m \)-point constellation that approaches the limit at the optimal rate.

We implement our finite mixture approximation in Algorithm~\ref{algo:mm}, utilizing the moment-matching approach for truncated distributions as described in Section~\ref{sec:ub-tail}. 
Theorem~\ref{thm:ub--subW} implies that setting \( t \asymp \sigma \sqrt{m \log(1 + \sigma^{-1})} \) and \( K \asymp \sigma^2 \log(1 + \sigma^{-1}) \) yields a rate-optimal approximator. 
For the Gauss quadrature, we compute the moments of conditional distributions and apply the moment-based Golub-Welsch algorithm~\cite{GW69}, as detailed in Algorithm~\ref{algo:quad} in Appendix~\ref{app:GQ}.

\begin{algorithm}
    \caption{TruncQuad constellation for Gaussian channels} 
    \label{algo:mm}
    \begin{algorithmic}
    \STATE \textbf{Input:} Number of atoms $m$, signal-noise ratio $\sigma^2$, $c_1,c_2,c_3>0$
    \STATE \textbf{Output:} $m$-atomic distribution $P_m\in\calP_m$
    \STATE Define {$P=N(0,\sigma^2)$, }
\(
t = c_1 \sigma \sqrt{m \log(c_2 + \sigma^{-1})}
\), 
\(
K = \min\left(\lceil c_3  t^2 / m \rceil, m\right)
\), and $m_0=\lfloor m / K \rfloor$
\STATE Partition the intervals:
\(
I_j = [-t + (j - 1)\frac{2t}{K}, -t + j\frac{2t}{K}] , j\in[K]
\) 
\STATE Compute the conditional moment $m_{j}(P_{(k)})= \int_{I_k} x^j \diff P/P(I_k)$ for $j\in [2m_0-1]$ and $k\in[K]$
    \STATE For each $k\in[K]$, compute $P_{k}$ by Algorithm~\ref{algo:quad} with input $\bfm_k=\pth{m_{j}(P_{(k)})}_{j=1}^{2m_0-1}$
\STATE Compute $P_m = \sum_{k=1}^K P(I_k)P_{k}$
    \end{algorithmic}
\end{algorithm}

We conduct numerical experiments to evaluate the capacity gap of the truncated quadrature method (TruncQuad) described in Algorithm~\ref{algo:mm} with parameters \( (c_1, c_2, c_3) = (2, 5, 0.2) \). 
Fixing \( \sigma^2 \in \{1, 10, 50\} \), we measure the capacity gap as the number of atoms \( m \) increases. 
We compare its performance against the following constellation schemes considered in \cite{WV10}:

\begin{enumerate}[label=(\arabic*)]
    \item Equilattice: the constellation uniformly distributed on the grid \( E_m \), defined as 
    \begin{equation*}
    E_{m}=\begin{cases}
    \left\{2 i \Delta_{m}: i=\frac{1-m}{2}, \ldots, 0, \ldots, \frac{m-1}{2}\right\} & m \text { odd } \\
    \left\{(2 i+1) \Delta_{m}: i=-\frac{m}{2}, \ldots, 0, \ldots, \frac{m-2}{2}\right\} & m \text { even, }
    \end{cases} \quad \Delta_{m}=\sqrt{\frac{3}{m^{2}-1}}.
    \end{equation*}
    \item Quadrature: the Gauss quadrature rule based on the first $2m-1$ moments of $P$.
    \item Quantized: the real line is partitioned into \( m \) subintervals using quantile sequences \( \alpha_j = \Phi^{-1} \left( \frac{j-1}{m} \right) \), \( j = 0, \ldots, m \). The \( m \)-point uniform distribution is then supported on the conditional expectations \( x_j = \mathbb{E}_P [ X \mid X \in [\alpha_{j-1}, \alpha_j]] \) for \( j \in [m] \), with the constellation scaled to achieve variance \( \sigma^2 \). 
    \item Random walk: sample $B_{m} \sim \operatorname{Binomial}(m-1,1 / 2)$ and define $X_m = \frac{2\sigma}{\sqrt{m-1}}\left(B_{m}-\frac{m-1}{2}\right)$ as the input. By the central limit theorem,  $\hat{X}_{m}$ asymptotically converges to $N(0,\sigma^2)$.
\end{enumerate}

Figure~\ref{fig:m_capacity_gap} presents the results, showing that both TruncQuad and Gauss-Hermite quadrature converges exponentially, whereas the quantized Gaussian and random walk schemes converge more slowly at a polynomial rate. Meanwhile, the equilattice input achieves only a constant error, as it converges weakly to \( \Unif([- \sqrt{3}, \sqrt{3}]) \) as \( m \to \infty \).  
Moreover, TruncQuad yields smaller errors than Gauss-Hermite quadrature and significantly so for high SNR (large \( \sigma \)), which aligns with our theoretical analysis. 
Notably, TruncQuad converges at a rate of \( \exp\left(-\Omega\left(m \log\left(1 + \frac{1}{\sigma}\right)\right)\right) \) (see \eqref{eq:ub-subg}), outperforming the \( \sigma^2 \exp\left(-\Omega\left(m \log\left(1 + \frac{1}{\sigma^2}\right)\right)\right) \) rate of Gauss-Hermite quadrature~\cite[Theorem 8]{WV10} when \( \sigma \) is large.   
\begin{figure}[H]
    \centering
    \includegraphics[width=\linewidth]{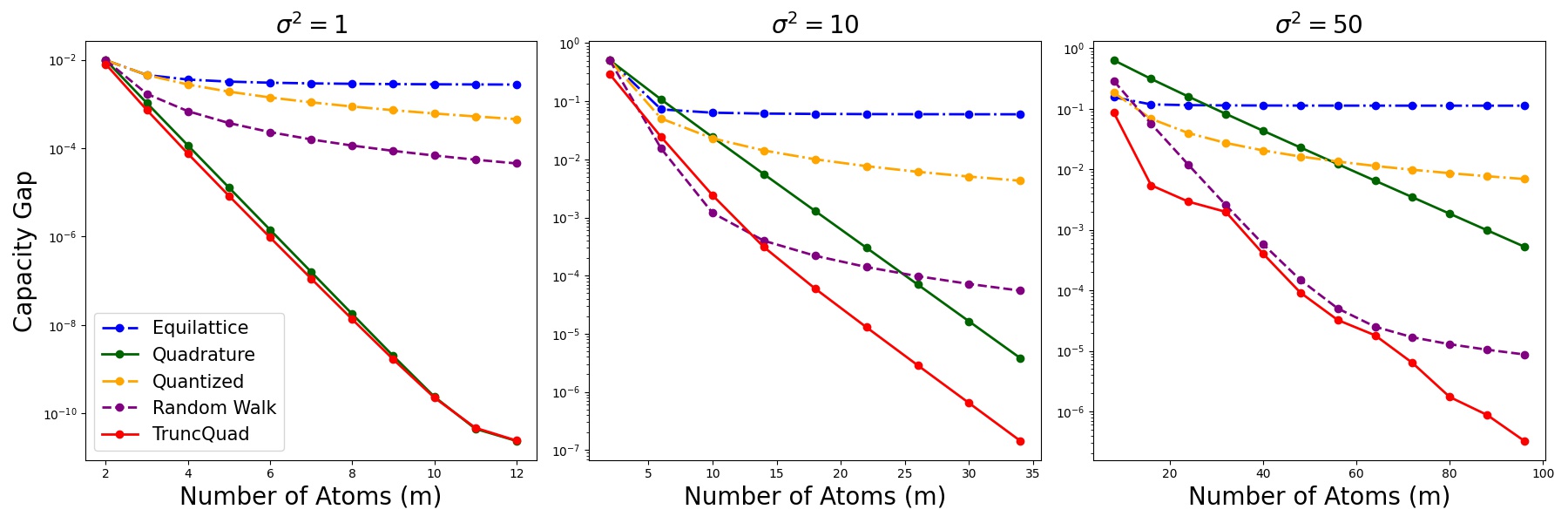}
    \caption{Capacity gap  with $\sigma^2=1,10,50$ as $m$ increases.}
    \label{fig:m_capacity_gap}
\end{figure}

\subsection{Convergence rates of nonparametric maximum likelihood estimator}
\label{sec:mle-rate}
Given a sample of $n$ independent observations $X=(X_1,\ldots,X_n)$, where $X_i\iiddistr f_{P^\star}$,
the nonparametric maximum likelihood estimator (NPMLE) is defined as \cite{KW56}
\begin{equation}
    \label{eq:npmle-def}
   {\hat{P}(\calQ) \in \argmax_{Q\in\calQ} \frac{1}{n}\sum_{i=1}^n \log f_Q(X_i),}
\end{equation}
where the optimization is over a class $\calQ$ of priors. 
The finite-order approximation of a family of distributions naturally yields upper bounds of the metric entropy of that family, which further determines the convergence rate of the NPMLE \cite{WS95,Vaart1996WeakCA,Genovese00,GV01,JZ09}.
Particularly, \cite{Zhang08} studies the \textit{unconstrained} NPMLE where $\calQ$ is the class of all distributions on $\reals$. 
Following the analysis in \cite{Zhang08}, the Hellinger distance $H(f_{\hat P}, f_{P^\star})$ can be upper bounded in terms of $m^\star(\epsilon,\calQ,L_\infty[-M,M])$.
In this regard, our analysis recovers the convergence rates established in \cite{Zhang08} by deriving an upper bound for \( m^\star(\epsilon, \mathcal{Q}, L_\infty[-M, M]) \) as an application of Theorem~\ref{thm:ub-bdd}. This result is detailed in Lemma~\ref{lem:npmle_uc}.

Moreover, suppose $P^\star$ belongs to a given family of distributions $\calP$, Theorem~\ref{thm:rate-families} below in fact establishes a faster rate 
for the \textit{constrained} NPMLE with $\calQ=\calP$, which is 
characterized by 
{$\comp(n^{-1/2},\calP, H)\frac{\log n}{n}$} with respect to the Hellinger distance\footnote{
We note that existing minimax lower bounds in \cite[Theorem 20]{PW21} agree with 
{$\comp(n^{-1/2},\calP, H)/n$}. 
However, bridging this gap remains an open problem.}.
In contrast to \cite[Theorem 1]{Zhang08} which yield the same convergence rates {of the unconstrained NPMLE} for the families $\Pbdd{M}$ and $\calP_\alpha(\beta)$, our result gives a strictly faster rate for the former.
{The improvement stems from the additional structural information \( P^\star \in \mathcal{P} \), which enables the application of our tight approximation rates.}
In addition, another technical contribution is a simplified proof: Compared with that of \cite[Theorem 1]{Zhang08}, 
we simplify the argument by first proving a $L_\infty$-entropy estimate then directly bounding the square root of the mixture density.  The proofs are deferred to Appendix~\ref{app:proof-mle-rate}.

\begin{theorem}
    \label{thm:rate-families}
    \begin{enumerate}
        \item Let $P^\star\in \Pbdd{M}$, where $M \leq n^{c}$ for some constant $c>0$.
        {Denote $\hat P=\hat P(\Pbdd{M})$. }
        There exist constants $s^\star,c^\prime>0$ such that
        for any $s \geq s^\star$, 
    \begin{equation}
    \label{eq:npmle-rate}
        \pbb[H(f_{\hat{P}}, f_{P^\star} )\geq s\epsilon_n]\leq n^{-c^\prime s^2},
    \end{equation} 
    where
        \begin{equation}
            \label{eq:rate-bdd}
            \epsilon_n= \frac{\log n}{\sqrt{n\log\left(1+ \sqlog{n}/M\right)}}.
        \end{equation}
    \item Let $P^\star\in \calP_\alpha(\beta)$, where $\alpha>0$ is fixed and $\beta \leq n^{c}$ for some constant $c>0$.
    {Denote $\hat P=\hat P(\calP_\alpha(\beta))$. }
    There exist $s^\star,c^\prime >0$ which depend on $\alpha$ such that
    for any $s \geq s^\star$,   
    \eqref{eq:npmle-rate} holds with
        \begin{equation}
            \label{eq:rate-tail}
            \epsilon_n= \frac{\log n}{\sqrt{n\log\pth{1+\pth{\log n}^{\frac{\alpha-2}{2\alpha}}/\beta}}}.
        \end{equation}
    \end{enumerate}
\end{theorem}

\begin{remark}
    Theorem~\ref{thm:rate-families} can be extended to other divergences using the divergence comparison inequalities \cite{JPW23}. For example, for \( d=\KL \) and \( \chi^2 \) and any $P,Q \in \Pbdd{M}$, the inequalities  take the form 
\(
d(f_{P} , f_{Q}) \leq C_M H^2(f_{P}, f_{Q})
\) with some \( C_M \) depending on \( M \). By \eqref{eq:npmle-rate}, we have
\[
\mathbb{P}[d(f_{\hat{P}} , f_{P^\star}) \geq s^2 C_M \epsilon_n^2] \leq \mathbb{P}[H(f_{\hat{P}}, f_{P^\star}) \geq s \epsilon_n] \leq n^{-c^\prime s^2},
\]
which implies an \( O(C_M \epsilon_n^2) \) rate in $d(f_{\hat{P}} , f_{P^\star})$ for the constrained NPMLE on $\Pbdd{M}$.  
The tight dependency on the parameters $M$ remains open. 
\end{remark}

\subsection{Mixing distributions under moment constraints}
\label{sec:extension-moment}
So far, our results deal with mixing distributions that exhibit exponential decay in their tails. 
In fact, our analysis framework also allows for extension to distributions with heavier tails.
Let us consider mixing distributions under moment conditions, which were well-studied in mixture models \cite{GV07,Zhang08,Saha19,SW22}
 Let $\psi(x)=x^\alpha$ for $\alpha>0$. Then the corresponding $\psi$-Orlicz norm is simply the usual
    $L_\alpha$-norm 
 $\|X\|_\psi = \|X\|_\alpha \triangleq \Expect[|X|^\alpha]^{\frac{1}{\alpha}}$. Define
    $$\calM_\alpha(\beta)\triangleq\{P: X\sim P, \|X\|_\alpha\leq \beta \},$$ 
    that is, the family of distributions whose  $\alpha\Th$ moment is no larger than $\beta^\alpha$. For $P\in \calM_\alpha(\beta)$, \eqref{eq:orlicz_tail} implies the polynomial-type tail bounds $P[|X| \geq t]\leq 2\pth{\frac{\beta}{t}}^\alpha$.
\begin{theorem}
    \label{thm:main-moment}
    There exists $c_1,c_2$ (which may depend on $\alpha$) such that the following holds. When $m\geq c_1\beta$ and $\beta \geq c_2$,
    \begin{equation*}
        \frac{1}{m\log\frac{m}{\beta}}\pth{\frac{\beta}{m}}^\alpha\lesssim_\alpha\appr(m,\calM_\alpha(\beta),\TV)\lesssim_\alpha 
       \pth{\frac{\beta}{m}\sqlog{\frac{m}{\beta}}}^\alpha.
    \end{equation*}
\end{theorem}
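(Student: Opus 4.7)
The plan for both bounds adapts the techniques of Sections~\ref{sec:UB} and \ref{sec:LB} to the polynomial (rather than exponential) tail decay of $\calM_\alpha(\beta)$.

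\textbf{Upper bound.} The strategy mirrors Theorem~\ref{thm:ub--subW}: combine truncation with moment matching on the truncated support. For $P\in\calM_\alpha(\beta)$, the tail bound~\eqref{eq:orlicz_tail} gives $P(I_t^c)\le 2(\beta/t)^\alpha$ with $I_t=[-t,t]$. The data-processing inequality $\TV(f_P,f_{P_{I_t}})\le P(I_t^c)$ together with the triangle inequality yields
\[
\TV(f_P,f_{P_m})\le 2(\beta/t)^\alpha+\appr(m,\Pbdd{t},\TV).
\]
Invoking Theorem~\ref{thm:ub-bdd} together with $\TV\le \tfrac{1}{2}\sqrt{\chi^2}$ in the regime $3\sqrt{\kappa}t\le m\le \kappa t^2$ yields $\appr(m,\Pbdd{t},\TV)\le \exp(-\Omega(m^2/t^2))$. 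Choosing $t = c(\alpha)\, m/\sqrt{\log(m/\beta)}$ with a sufficiently small $c(\alpha)$ forces the exponential contribution below $(\beta/m)^\alpha\log^{\alpha/2}(m/\beta)$, while the polynomial tail becomes $\Theta_\alpha\pth{(\beta/m)^\alpha\log^{\alpha/2}(m/\beta)}$, matching the target. Verifying $3\sqrt{\kappa}t\le m\le \kappa t^2$ under the hypothesis $m\ge c_1\beta$, $\beta\ge c_2$ is routine.

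\textbf{Lower bound.} Unlike the sub-Weibull case, any density with polynomial decay $|x|^{-(\alpha+1)}$ violates the constraint $\Expect|X|^\alpha\le\beta^\alpha$ by a logarithmic factor, so no single heavy-tailed density both realizes the constraint and spreads its mass over a wide support. I would instead use a two-component mixture
\[
P=(1-p)\delta_0 + p\,Q,
\]
where $Q$ has density proportional to $(1+(|x|/\beta)^{\alpha+1})^{-1}\mathbf{1}\{|x|\le L\}$, with $L\asymp m$ and $p\asymp 1/\log(m/\beta)$. A direct calculation gives $\Expect_Q|X|^\alpha\asymp_\alpha \beta^\alpha\log(L/\beta)$, so this choice of $p$ guarantees $P\in\calM_\alpha(\beta)$. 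By linearity of the Fourier coefficients, $\bfT_m(\delta X)=(1-p)J+p\,\bfT_m(\delta Y)$, where $Y\sim Q$ and $J$ is the $(m+1)\times(m+1)$ all-ones (rank-one, PSD) matrix; Weyl's inequality then yields $\lambda_{\min}(\bfT_m(\delta X))\ge p\,\lambda_{\min}(\bfT_m(\delta Y))$. Applying the wrapped-density bound~\eqref{eq:eigen-lb-0} to the scaled density of $\delta Y$ and estimating at an arbitrary $\theta\in[0,2\pi]$ yields $\lambda_{\min}(\bfT_m(\delta Y))\gtrsim_\alpha (\delta\beta)^\alpha$ whenever $\delta L\gtrsim 1$. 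Inserting into Proposition~\ref{prop:TV-eigen} with $\delta=1/m$ (so that $\exp(m^2\delta^2/2)=O(1)$ and $\delta L\asymp 1$ given $L\asymp m$) produces
\[
\appr(m,P,\TV)\gtrsim_\alpha \frac{p\,(\beta/m)^\alpha}{m}\asymp_\alpha \frac{1}{m\log(m/\beta)}\pth{\frac{\beta}{m}}^\alpha.
\]

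\textbf{Main obstacle.} The heart of the argument is the lower-bound construction: one must exhibit a test distribution in $\calM_\alpha(\beta)$ whose trigonometric moments are well-conditioned at a scale $\delta$ small enough for $\exp(m^2\delta^2/2)$ to be controlled yet large enough for the wrapped density to have magnitude $\Omega_\alpha((\delta\beta)^\alpha)$; this forces the support to be of order $m\gg \beta$. The atom-plus-heavy-bulk decomposition resolves the tension by allocating only $p=\Theta(1/\log(m/\beta))$ of the mass to the wide bulk. The same $\log(m/\beta)$ factor surfaces in both the normalization of the heavy-tailed bulk and the final bound, and explains the logarithmic gap between the two sides of the stated inequality.
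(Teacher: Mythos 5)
Your upper bound follows the paper's argument essentially verbatim: truncate at $t \asymp_\alpha m/\sqrt{\log(m/\beta)}$, invoke Theorem~\ref{thm:ub-bdd}, and balance the exponential moment-matching error against the polynomial tail. The paper (Proposition~\ref{prop:ub-moment}) phrases the reduction via a slightly different triangle-inequality chain than your data-processing step, but the choice of $t$ and the final bookkeeping are the same.

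Your lower bound is correct but differs genuinely from the paper's. The paper (Proposition~\ref{thm:lb-tail-2}) works with a single-density test distribution, a one-sided truncated Pareto $h_\alpha(x) = a\,x^{-(\alpha+1)}\indc{b \le x \le kb}$ whose normalization $a = \beta^\alpha/\log k$ absorbs the logarithmic factor and guarantees $\Expect|X|^\alpha = \beta^\alpha$ exactly; the parameters $b \asymp \beta/(\log r)^{1/\alpha}$ and $kb \asymp m$ are tuned so that, after scaling by $\delta \asymp 1/m$, the support $[\delta b,\delta kb]$ is an interval of length $\ge 2\pi$, and the minimum of the wrapped density over $[0,2\pi)$ is bounded below by $h_\alpha(3\pi/\delta)/\delta$. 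You instead build an atom-plus-bulk mixture $(1-p)\delta_0 + pQ$ with a symmetric Cauchy-like $Q$ on $[-L,L]$, put the logarithmic factor in the mixing weight $p \asymp 1/\log(m/\beta)$, and use the linearity of trigonometric moments together with Weyl's inequality to discard the rank-one contribution $(1-p)J$. This is a clean alternative: it makes the source of the logarithmic gap especially transparent (it is literally the mass allocated to the wide-support bulk), at the small cost of the extra Weyl step and a bit more care verifying the wrapped density is positive on all of $[0,2\pi)$ (you need $\delta L \ge \pi$, e.g.\ $L = \pi m$, $\delta = 1/m$). Both constructions address the same tension you identify — polynomial decay of order $|x|^{-(\alpha+1)}$ over a range of length $m$ overshoots the moment constraint by $\log(m/\beta)$ — by paying for it logarithmically, one in the density's normalization and the other in the mixing weight.
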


In contrast to the sub-Weibull family, distributions in $\calM_\alpha(\beta)$ have much heavier tails and thus require more components to approximate. As such, the best approximation error is no longer exponential but polynomial in $m$ for light-tailed, as shown in  Theorem~\ref{thm:main-moment}.
The derivation of the upper and lower bounds essentially follows the same steps for Theorems~\ref{thm:ub--subW} and~\ref{thm:lb-tail-1}, respectively, and the proofs are deferred to Appendix~\ref{app:main-moment}.

There is a gap between the lower and upper bound polynomial in $m$. Following our derivation, the gap appears 
due to the extra $m^{-1}$ term in the Proposition~\ref{prop:TV-eigen}. In Theorems~\ref{thm:main-bdd} and~\ref{thm:main-tail}, the extra polynomial term does not affect the tightness of the results since the exponential term dominates the rate. On the other hand, Theorem~\ref{thm:main-moment}
is limited to the $\TV$ distance, since other divergences (\eg, the $\KL$ and $\chi^2$ divergence) may vary within an exponent from the $\TV$ distance and thus widen the gap of the polynomial rate. A potential remedy is to characterize the information geometry of the Gaussian mixture families, as carried out in \cite{JPW23} for compactly supported and subgaussian mixing distributions.
The tight rate is still left to be discovered.

\subsection{Generalization to other mixture models}
\label{sec:extension-other}
So far we have focused on approximating general one-dimensional Gaussian location mixture by finite mixtures. Our techniques of finite-order approximation can also adapt to other models, including the Gaussian location-scale mixtures and multi-dimensional mixtures. In these contexts, most of the existing approximation results \cite{Saha19,Jiang20,SGS21,SW22}  in the empirical Bayes literature rely  on Taylor expansion and local moment matching. 

\paragraph{Location-scale Gaussian mixtures.}
Consider the location-scale Gaussian mixture with density $$f_H(x)=\int \frac{1}{\sigma}\phi\pth{\frac{x-\theta}{\sigma}} \diff H(\theta,\sigma),$$ where $H$ is the joint mixing distribution of location and scale parameters. 
In other words, 
if $(X,S)\sim H$ and $Z\sim N(0,1)$ are independent, then $X+SZ\sim f_H$.
Let $H_m$ denote an $m$-atomic approximation with $X_m+S_mZ\sim f_{H_m}$. 

Previous works applied moment-based approach to construct approximators. 
{Suppose that $H$ is supported on $[-a, a] \times[\underline{\sigma}, \bar{\sigma}]$ for some positive constants $a$ and $\underline{\sigma}\leq \bar{\sigma}$.} 
By applying Taylor expansion and Carath\'eodory's theorem, \cite[Lemma 3.4]{GV01} constructs an $H_m$ with $m\leq O(\log^2\frac{1}{\epsilon})$ such that $\Norm{f_H-f_{H_m}}_\infty \le \epsilon$. 
In addition, if $X$ and $S$ are independent, both $X_m$ and $S_m$ are supported on $O(\log\frac{1}{\epsilon})$ points.

Under the above compact support condition, a location-scale Gaussian mixture can be reduced to a location mixture. 
Specifically, define $X'=X+\sqrt{S^2-\underline{\sigma}^2}Z \sim P'$ and $Z_{\underline{\sigma}}\sim N(0,\underline{\sigma}^2)$.
Then 
\[
X'+Z_{\underline{\sigma}}\sim f_H,
\]
and $X'$ is $\sigma^2$-subgaussian for some $\sigma$ depending on $a$ and $\bar{\sigma}$.
Applying \eqref{eq:comp_subG} and Remark~\ref{remark:var}, there exists an $m$-atomic $P_m$ on $\reals$ with $m\leq O(\log\frac{1}{\epsilon})$ such that
$\TV(f_{P_m},f_{H})\leq \epsilon$.
Furthermore, since $\|f_H'\|_\infty\leq O(1)$, $\TV(f_{P_m},f_{H})\lesssim\epsilon$ implies $\Norm{f_{P_m}-f_{H}}_\infty\lesssim\sqrt{\epsilon}$. Consequently, the $O(\log\frac{1}{\epsilon})$ upper bound also holds for $d=L_\infty$, thereby improving the previous results in \cite{GV01}.

For the  lower bound, applying Lemma~\ref{lem:variation} yields that
\begin{align}
    \TV\left(f_{H_{m}} , f_{H}\right)
&\geq \frac{1}{2} \sup _{\omega} \left|\mathbb{E}\qth{\exp\pth{i\omega X-\frac{1}{2}\omega^2S^2}}-\mathbb{E}\qth{\exp\pth{i\omega X_m-\frac{1}{2}\omega^2S_m^2}}\right|.
\label{eq:TVlb-locationscale}
\end{align}
It is unclear how to lower bound this uniformly over $(X_m,H_m)$ that is $m$-atomic.

A key challenge arises from the distinct effects of the location and scale parameters on the mixture distribution.
Following our framework, the corresponding low-rank matrix over a grid \( \omega = \{\omega_j\}_{j=0}^{m} \subseteq \reals \) can be constructed as
\[
\mathbf{V}_m \triangleq \mathbb{E} \left[ \mathbf{v}_m(X_m, S_m) \mathbf{v}_m^\star(X_m, S_m) \right] = \left( \mathbb{E} \left[ e^{-i(\omega_j - \omega_k) X_m - \frac{1}{2} (\omega_j^2 + \omega_k^2) S_m^2} \right] \right)_{j,k=0}^{m},
\]
where \( \mathbf{v}_m(x,s) \triangleq \left( 1, e^{-i\omega_1 x - \frac{1}{2} \omega_1^2 s^2}, \dots, e^{-i\omega_m x - \frac{1}{2} \omega_m^2 s^2} \right) \).
However, the entries in $\mathbf{V}_m$ differ from the terms in the variational lower bound \eqref{eq:TVlb-locationscale}.

Nevertheless, for the special case of scale mixtures with the additional constraint \( X_m = X \equiv 0 \), we can further lower bound \eqref{eq:TVlb-locationscale} as follows.
First, by choosing $\omega\in \{\sqrt{k}: k\in \{0,\ldots,2m\}\}$, using the same argument as in 
Proposition \ref{prop:TV-eigen}, \eqref{eq:TVlb-locationscale} can be related to the minimum eigenvalue $\lambda_{\min}$ of the Hankel matrix $\pth{\Expect \exp\pth{-\frac{(j+k)S^2}{2}} }_{j,k=0}^{m}$. Applying the method based on orthogonal expansion in Proposition \ref{prop:ortho} (cf.~Appendix \ref{app:lb-unif-1}), one can show that $\lambda_{\min} \geq e^{-\Omega(m)}$, leading to the matching lower bound that at least $\Omega(\log\frac{1}{\epsilon})$ components are needed to achieve an approximation error $\epsilon$ under $\TV$ in the scale mixture model.
Establishing lower bounds for general location-scale mixtures remains an outstanding challenging.

\paragraph{Multiple dimensions.}
Fix $d\in\naturals$.
Consider the Gaussian location mixture on $\reals^d$: $$f_P(\bfx)=\int \phi_{(d)}(\bfx-\bstheta) \diff P(\bstheta),$$
where $\phi_{(d)}(z)\triangleq (2\pi)^{-d/2}\exp(-\|z\|^2/2)$ denotes the $d$-dimensional standard Gaussian density. 

Let $\Pbdd{M,d}$ denote the family of distributions supported on the $d$-dimensional Euclidean ball $B_d(M)\triangleq \{\bfx\in \reals^d: \|\bfx\| \leq M\}$.
When $P\in \Pbdd{M,d}$ is a product measure, our one-dimensional results can be applied to approximate each coordinate separately.
For general $P\in \Pbdd{M,d}$, we need to extend Lemma~\ref{lem:mm1} to the multidimensional case by comparing \emph{moment tensors} \cite{Doss20}. Then, we establish the following upper bound, which improves the previous result  \cite[Lemma D.3]{Saha19} by bounding the $\chi^2$-divergence and deriving a tighter bound than the  $\exp(-\Theta_d(m^{\frac{1}{d}}))$ result in \cite[Lemma D.3]{Saha19} under the case $m^{\frac{1}{d}}\gtrsim_d M^2$.
\begin{proposition}
 \label{prop:highdim-ub}
  There exists $\kappa_d>0$ which only depends on $d$ such that for any $m\in \naturals$ and $M>0$:
  \begin{align}
  \label{eq:ub-bdd-hd}
\appr(m,\Pbdd{M,d},\chi^2)\leq
  \begin{cases}
      \exp\left(-\frac{m^{\frac{1}{d}}}{2}\log\frac{m^{\frac{1}{d}}}{M^2}\right), & m^{\frac{1}{d}}\geq \kappa_d M^2; \\ 
      \exp\pth{-\frac{\log\kappa_d }{42\kappa_d}\frac{m^{\frac{2}{d}}}{M^2} }, & 6\sqrt{3\kappa_d}M \leq m^{\frac{1}{d}}\leq  \kappa_d M^2. \\ 
  \end{cases}
\end{align}
\end{proposition}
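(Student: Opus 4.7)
The plan is to follow the two-step strategy from the one-dimensional proof of Theorem~\ref{thm:ub-bdd}: first establish a multivariate analogue of Lemma~\ref{lem:mm1} that controls $\chi^2(f_P\|f_Q)$ by differences of multivariate moment tensors, then use global moment matching in the first regime and local partition-based matching in the second. The key technical input is the following extension of Lemma~\ref{lem:mm1}: for $P,Q\in\Pbdd{M,d}$, if $m_\alpha(P)=m_\alpha(Q)$ for every multi-index $\alpha\in\naturals^d$ with $|\alpha|\le J-1$, then by expanding $\chi^2(f_P\|f_Q)$ in multivariate Hermite polynomials of the Gaussian kernel $\phi_{(d)}$ (as in the proof of \cite[Lemma 9]{WY19}) and centering at $\mu=\Expect_Q[X]$, one should obtain
\[
\chi^2(f_P\|f_Q)\le C_d \exp(M^2/2) \sum_{k\ge J} \frac{(4dM^2)^k}{k!}\le C_d \exp(M^2/2) \pth{\frac{C_d' M^2}{J}}^J
\]
for $J > C_d' M^2$, where the $d$-dependent constants absorb the multinomial identity $\sum_{|\alpha|=k}\frac{1}{\alpha!}=\frac{d^k}{k!}$ and the crude moment bound $|m_\alpha(P-\mu)|\le (2M)^{|\alpha|}$.

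For the global regime $m^{1/d}\ge \kappa_d M^2$, I would invoke Tchakaloff's theorem (a multivariate analogue of Carath\'eodory's theorem for moment cones): for any $P\in\Pbdd{M,d}$ there exists a discrete measure supported on at most $N_J\triangleq \binom{J+d-1}{d}\lesssim_d J^d$ atoms of $B_d(M)$ that matches all moments of $P$ of total degree at most $J-1$. Picking $J\asymp_d m^{1/d}$ so that $N_J\le m$ and plugging into the $\chi^2$ estimate above produces the first branch of~\eqref{eq:ub-bdd-hd}.

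For the intermediate regime $6\sqrt{3\kappa_d}M\le m^{1/d}\le \kappa_d M^2$, I partition $[-M,M]^d\supset B_d(M)$ into $K^d$ equal axis-aligned sub-cubes, each contained in a Euclidean ball of radius $\tilde M\asymp_d M/K$. Conditioning $P$ on each sub-cube to obtain $P_{(j)}$ and applying the first regime locally with $\tilde m=\floor{m/K^d}$ atoms, the local requirement $\tilde m^{1/d}\ge \kappa_d \tilde M^2$ simplifies to $K\gtrsim_d \kappa_d M^2/m^{1/d}$, so one sets $K\asymp \kappa_d M^2/m^{1/d}$. Assembling $P_m=\sum_j P(\text{cube }j)\,\tilde P_j$ and invoking the convexity of the $\chi^2$-divergence gives $\chi^2(f_{P_m}\|f_P)\le \max_j \chi^2(f_{\tilde P_j}\|f_{P_{(j)}})\le \exp\pth{-\Omega_d(m^{2/d}/M^2)}$, yielding the second branch after chasing constants. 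The main obstacle is the first step: rigorously tracking the $d$-dependence in the multivariate Hermite expansion and pinning down the constants $\kappa_d,C_d,C_d'$, since the expansion produces additional combinatorial factors that must be absorbed without degrading the target exponent $m^{1/d}\log(m^{1/d}/M^2)$; the partitioning and convexity step in the second regime is then a routine extension of the one-dimensional argument.
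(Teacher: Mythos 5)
Your proposal follows essentially the same route as the paper's proof: a multivariate extension of Lemma~\ref{lem:mm1} bounding $\chi^2$ by moment-tensor (equivalently, multi-index) differences with a Poisson-tail estimate, a quadrature/Carath\'eodory argument (the paper applies Carath\'eodory to the tuple of distinct moments, which is interchangeable with your Tchakaloff invocation, both giving $O_d(J^d)$ atoms) for the global regime, and local conditioning plus convexity of $\chi^2$ for the intermediate regime. The only cosmetic difference is that you partition $[-M,M]^d$ into axis-aligned sub-cubes while the paper takes an $r$-covering of $B_d(M)$ and converts it to a Voronoi-type partition; both yield the claimed exponent once the $d$-dependent constants are absorbed into $\kappa_d$.
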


The lower bound is derived following the roadmap in Section~\ref{sec:lb-general}. For the multidimensional regime, the trigonometric moment matrices are replaced by multilevel Toeplitz matrices \cite{TYRTYSHNIKOV1998multiToeplitz,Pestana2019}, and 
{both the wrapped density approach and the orthogonal expansion approach are} then applied to yield the following result. 
\begin{proposition}
 For any $m\in\naturals$,
    \label{prop:highD-lb}      
    \begin{equation}
    \label{eq:highD-lb-bdd-1} 
    \appr(m,\Unif([-M/\sqrt{d},M/\sqrt{d}]^d),\TV)\geq \frac{1}{2^{d+1}m}\exp\pth{-\frac{d^2\pi^2 m^{\frac{2}{d}}}{2M^2} }.
    \end{equation}
    {
    Furthermore, there exists a universal constant $C_1$ and $C^\prime$ such that when $m^{\frac{1}{d}}\geq C^\prime M^2/d$,
    \begin{equation}
    \label{eq:highD-lb-bdd-2} 
    \appr(m,\Pbdd{M,d},\TV)\geq \frac{1}{2^{d+1}m}\exp\pth{-C_1dm^{\frac{1}{d}}\log\pth{\frac{dm^{\frac{1}{d}}}{M^2}}}.
    \end{equation}
    }
 \end{proposition}
{Consequently, Propositions~\ref{prop:highdim-ub} and \ref{prop:highD-lb} give the tight approximation rate of $m$ and $M$ for the multi-dimensional distributions with compact support.}
The approximation rates for the distribution classes under tail conditions can be similarly extended. 
The additional results and proofs are presented in Appendix~\ref{app:highD-ub-lb}.

\paragraph{General mixtures.}
Our analytical framework extends beyond Gaussian mixtures.  
For the lower bounds, the convolution \( P * \nu \) is the distribution of \( X + Z \) with \( X \sim P \) and \( Z \sim \nu \) being independent. Analogous to Proposition~\ref{prop:TV-eigen}, we obtain 
\[
\inf_{P_m\in \calP_m} \TV(P_m*\nu,P*\nu) \geq
\sup_{\delta>0} \frac{\lambda_{\min}(\bfT_m(\delta X))}{2(m+1) \max_{k\in\mathbb{Z},|k|\leq m} |t_{k}(\delta Z)|}.
\] 
The lower bound for approximating \( P * \nu \) thus follows from bounding \( \lambda_{\min}(\mathbf{T}_m(\delta X)) \), as discussed in Section~\ref{sec:spectral}, together with the formula for \( t_k(\delta Z) \). 

The upper bounds can also be extended if the mixture can be identified by sufficiently many moments of its mixing distribution, akin to Lemma~\ref{lem:mm1}.
For instance, consider approximating the Poisson mixture 
$$\pi_{P}(y)\triangleq\int \operatorname{poi}(y,\lambda) \mathrm{d} P(\lambda)$$
by some finite mixture $\pi_{P_m}$, 
where $\operatorname{poi}(y,\lambda)= \frac{e^{-\lambda}\theta^y}{y!}$ and $P_m\in \calP_m$.  
As an Poisson analogue of Lemma~\ref{lem:mm1}, \cite[Theorem 3.3.4]{WY20Book} shows that for any $P,Q$ supported on $[0,M]$, if $m_j(P)=m_j(Q)$ for all $j\in[L]$, then
\[
\TV \left(\pi_{P}, \pi_{Q}\right) \leq\left(\frac{e M}{2 L}\right)^{L}.
\]
For $P\in\Pbdd{M}$ with $M\lesssim \log \frac{1}{\epsilon}$, 
there exists $P_m\in\calP_m$ with $m \leq O\pth{\frac{\log\frac{1}{\epsilon}}{\log\left(
1+ \frac{1}{M}{\log \frac{1}{\epsilon}} \right)}}$ such that $\TV \left(\pi_{P_m}, \pi_{P}\right) \leq \epsilon$. 
 For larger $M$, \cite[Lemma 6]{SW22} suggests that the result may be further improved by matching the moments of conditional distributions on quadratically partitioned subintervals, and the optimal rate remains open.

\appendix
\section{Preliminaries}
\subsection{Distances between probability measures}
\label{app:f-div}
The loss function $d(\cdot,\cdot)$ for measuring the approximation error is chosen among $f$-divergences,
including
\begin{itemize}
  \item $\chi^2$-divergence $\chi^{2}(P\|Q)=\int\frac{(p-q)^2}{q}  \diff \mu$;
  \item Total variation distance $\TV(P, Q)=\frac{1}{2}\int|p-q|\diff \mu$;
  \item Kullback-Leibler (KL) divergence $\KL(P, Q)=\int p \log \frac{p}{q} \diff \mu$;
  \item Squared Hellinger distance $H^2(P, Q)={\int(\sqrt{p}-\sqrt{q})^{2} \diff \mu}$,
\end{itemize}
where $p= \frac{\diff P}{\diff\mu}$ and 
$q= \frac{\diff Q}{\diff\mu}$ for some measure $\mu$ dominating $P$ and $Q$.
We refer the reader to  \cite[Chapter 7]{PW-it} for a comprehensive over of $f$-divergences, including  the following inequalities \cite[Sec.~7.6]{PW-it}:
\begin{lemma}
    \label{lem:f-divs}
  \begin{equation*}
     \frac{1}{2}H^2(P, Q)\leq \TV(P, Q) 
     \leq \sqrt{\frac{1}{2}D(P, Q)} \leq \sqrt{\frac{1}{2}\chi^{2}(P, Q)}, \end{equation*}
     \begin{equation*}
         \TV(P, Q)\leq H(P, Q) 
     \leq \sqrt{D(P, Q)}.
     \end{equation*}
\end{lemma}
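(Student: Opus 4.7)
The statement is a chain of classical $f$-divergence inequalities, and the plan is to prove each link separately by a short, standard computation. Throughout, let $p=dP/d\mu$ and $q=dQ/d\mu$.

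For the bounds that involve only $H$ and $\TV$, the key is the pointwise identity $(\sqrt{p}-\sqrt{q})(\sqrt{p}+\sqrt{q})=p-q$. Since $|\sqrt{p}-\sqrt{q}|\le \sqrt{p}+\sqrt{q}$, this gives $(\sqrt{p}-\sqrt{q})^2\le |p-q|$ pointwise, and integrating yields $H^2\le 2\TV$. In the reverse direction, I would apply Cauchy--Schwarz to $2\TV=\int|\sqrt{p}-\sqrt{q}|(\sqrt{p}+\sqrt{q})\,d\mu$, obtaining
\begin{equation*}
2\TV\le H(P,Q)\cdot\sqrt{\int(\sqrt{p}+\sqrt{q})^2\,d\mu}=H(P,Q)\sqrt{4-H^2(P,Q)}\le 2H(P,Q).
\end{equation*}

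For the inequalities linking $\KL$ with $\chi^2$ and with $H$, I would use the scalar bounds $\log x\le x-1$ and $-\log x\ge 2(1-\sqrt{x})$ (the second follows from $-\log y\ge 1-y$ applied at $y=\sqrt{x}$ and doubled). The first gives
\begin{equation*}
\KL(P,Q)=\int p\log(p/q)\,d\mu\le \int p\,(p/q-1)\,d\mu=\chi^2(P,Q),
\end{equation*}
so $\sqrt{\tfrac{1}{2}\KL}\le \sqrt{\tfrac{1}{2}\chi^2}$, and the second gives
\begin{equation*}
\KL(P,Q)=-\int p\log(q/p)\,d\mu\ge 2\int(p-\sqrt{pq})\,d\mu=H^2(P,Q),
\end{equation*}
using $\int p\,d\mu+\int q\,d\mu=2$. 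Composing with $\TV\le H$ already yields $\TV\le\sqrt{\KL}$, but this is weaker than what is claimed.

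The main obstacle is therefore Pinsker's inequality $2\TV^2\le \KL$. My plan is the standard reduction to Bernoulli: applying the data-processing inequality to the test $T=\mathbf{1}\{p>q\}$, the $\TV$ is preserved (by the Neyman--Pearson characterization) while $\KL$ can only decrease, so it suffices to prove $2(a-b)^2\le a\log(a/b)+(1-a)\log\tfrac{1-a}{1-b}$ for $a,b\in(0,1)$. I would verify this scalar bound by fixing $a$ and studying $g(b)=\KL(\mathrm{Bern}(a)\|\mathrm{Bern}(b))-2(a-b)^2$, noting $g(a)=g'(a)=0$ and $g''(b)=\tfrac{a}{b^2}+\tfrac{1-a}{(1-b)^2}-4$; the minimum of $\tfrac{a}{b^2}+\tfrac{1-a}{(1-b)^2}$ over $b\in(0,1)$ is at least $4$ (by AM--GM after writing $a=b+(a-b)$), so $g''\ge 0$ and $g\ge 0$. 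This closes the final and most delicate link in the chain.
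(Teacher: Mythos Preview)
Your proofs of the inequalities $\tfrac{1}{2}H^2\le \TV\le H$, $\KL\le \chi^2$, and $H^2\le \KL$ are correct and standard. The paper itself does not prove this lemma at all but merely cites a textbook, so you are supplying more than the paper does.

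However, your argument for Pinsker's inequality has a genuine error. You claim that $g''(b)=a/b^2+(1-a)/(1-b)^2-4\ge 0$ for all $b\in(0,1)$, asserting that the minimum of $a/b^2+(1-a)/(1-b)^2$ over $b$ is at least $4$. This is false: take $a=0.99$ and $b=0.9$, which gives $0.99/0.81+0.01/0.01\approx 2.22<4$. So $g$ is not convex in $b$, and the argument via $g(a)=g'(a)=0$ together with $g''\ge 0$ breaks down. The AM--GM hint you give does not rescue it.

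The fix is to work with the first derivative instead. Writing
\[
g'(b)=-\frac{a}{b}+\frac{1-a}{1-b}+4(a-b)=\frac{b-a}{b(1-b)}-4(b-a)=(b-a)\Bigl(\frac{1}{b(1-b)}-4\Bigr),
\]
and noting $b(1-b)\le 1/4$ so that $1/(b(1-b))\ge 4$, you see $g'(b)$ has the same sign as $b-a$. Hence $g$ is decreasing on $(0,a)$ and increasing on $(a,1)$, so its global minimum is $g(a)=0$, yielding $g\ge 0$. This closes the gap.
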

We also need the following variational representation of  $f$-divergences (see, e.g, \cite[Sec.~7.13]{PW-it}):
\begin{lemma}
\label{lem:variation}
  For two probability measures $P$ and $Q$ supported on a space $\calX$, 
   \begin{align}
   \label{eq:vr-2}
   \chi^{2}(P \| Q)=&~\sup_{g:\calX \to \reals} \frac{\left(\mathbb{E}_{P}g -\mathbb{E}_{Q} g  \right)^{2}}{\var_{Q}[g ]},\\
    \TV(P, Q)=&~\frac{1}{2} \sup _{\|g\|_{\infty} \leq 1}\left|\mathbb{E}_{P} g -\mathbb{E}_{Q} g \right|,
      \label{eq:vr-1}
   \end{align} 
   where  $g: \calX\to\complex$ in \eqref{eq:vr-1} and    $\|g\|_{\infty}\triangleq\sup_{x\in\calX} |g(x)|$ denotes the $L_\infty$-norm.
\end{lemma}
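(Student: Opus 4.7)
The plan is to extend the one-dimensional argument behind~\eqref{eq:lb-unif-2} in Theorem~\ref{thm:lb-unif} by replacing the trigonometric moment matrix with its multilevel Toeplitz analog.

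First, I would derive a $d$-dimensional counterpart of Proposition~\ref{prop:TV-eigen}. Denote by $\phi_Y(\boldsymbol{\omega})\triangleq \Expect[e^{i\langle \boldsymbol{\omega}, Y\rangle}]$ the characteristic function of $Y$, and let $Z\sim N(0,I_d)$. Applying the variational representation~\eqref{eq:vr-1} with the test function $\mathbf{x}\mapsto e^{i\langle \boldsymbol{\omega}, \mathbf{x}\rangle}$ and using $\phi_{Y+Z}(\boldsymbol{\omega}) = e^{-\|\boldsymbol{\omega}\|^2/2}\phi_Y(\boldsymbol{\omega})$ gives
\[
\TV(f_{P_m}, f_P) \;\geq\; \tfrac{1}{2}\, e^{-\|\boldsymbol{\omega}\|^2/2}\, |\phi_{X_m}(\boldsymbol{\omega}) - \phi_X(\boldsymbol{\omega})|
\]
for every $\boldsymbol{\omega}\in\reals^d$. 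I would then restrict $\boldsymbol{\omega}$ to the cubic lattice $\delta\cdot\{0,1,\ldots,n\}^d$ with $\delta>0$ to be chosen, and assemble the values $t_{\mathbf{l}}(\delta X)\triangleq \phi_X(\delta \mathbf{l})$ for $\mathbf{l}\in\{-n,\ldots,n\}^d$ into a multilevel Toeplitz matrix $\mathbf{T}(\delta X)$ of size $(n+1)^d\times(n+1)^d$ with entries $\mathbf{T}[\mathbf{j},\mathbf{k}]=t_{\mathbf{k}-\mathbf{j}}(\delta X)$. Using $\|\cdot\|_F\leq (n+1)^d\|\cdot\|_{\max}$ together with the worst-case bound $\max_{\mathbf{l}\in\{-n,\ldots,n\}^d}\|\mathbf{l}\|^2=dn^2$ then yields
\[
\TV(f_{P_m},f_P)\;\geq\;\frac{\|\mathbf{T}(\delta X_m)-\mathbf{T}(\delta X)\|_F}{2(n+1)^d\exp(dn^2\delta^2/2)}.
\]

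The critical rank bound comes from the decomposition $\mathbf{T}(\delta X_m)=\sum_{i=1}^m w_i \mathbf{z}_i\mathbf{z}_i^\star$ with $\mathbf{z}_i=(e^{-i\delta\langle\mathbf{k},\boldsymbol{\theta}_i\rangle})_{\mathbf{k}}$ whenever $P_m=\sum_iw_i\delta_{\boldsymbol{\theta}_i}$; hence $\operatorname{rank}(\mathbf{T}(\delta X_m))\leq m$. Choosing $n=\lfloor m^{1/d}\rfloor$ ensures $(n+1)^d\geq m+1$, and the Eckart-Young-Mirsky theorem (Lemma~\ref{lem:eym}) then yields $\|\mathbf{T}(\delta X_m)-\mathbf{T}(\delta X)\|_F\geq \lambda_{\min}(\mathbf{T}(\delta X))$.

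To specialize to $P=\Unif([-M/\sqrt{d},M/\sqrt{d}]^d)$, I would take $\delta=\pi\sqrt{d}/M$, so each coordinate $\delta X_i\sim\Unif[-\pi,\pi]$ is independent. Then $\phi_X(\delta\mathbf{k})=\prod_{i=1}^d\indc{k_i=0}$, making $\mathbf{T}(\delta X)$ the $(n+1)^d$-dimensional identity and hence $\lambda_{\min}=1$. Combined with $dn^2\delta^2\leq d^2\pi^2m^{2/d}/M^2$ and $(n+1)^d\leq (m^{1/d}+1)^d\leq 2^d m$ (using $m\geq 1$), this produces the claimed bound $(2^{d+1}m)^{-1}\exp(-d^2\pi^2m^{2/d}/(2M^2))$. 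The main obstacle is correctly setting up the multilevel Toeplitz formalism---in particular, verifying the low-rank factorization of $\mathbf{T}(\delta X_m)$ and matching the matrix order $(n+1)^d$ to $m$---but once these structural facts are in place, the remaining estimates transcribe the univariate argument almost verbatim.
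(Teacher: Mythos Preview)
Your proposal does not address the stated lemma. Lemma~\ref{lem:variation} asserts the variational representations of the $\chi^2$-divergence and the total variation distance; the paper does not prove this but simply cites it as a standard fact. What you have written is instead a proof of Proposition~\ref{prop:highD-lb}, the multidimensional lower bound
\[
\appr\bigl(m,\Unif([-M/\sqrt{d},M/\sqrt{d}]^d),\TV\bigr)\;\geq\;\frac{1}{2^{d+1}m}\exp\!\left(-\frac{d^2\pi^2 m^{2/d}}{2M^2}\right).
\]

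If the intended target were Proposition~\ref{prop:highD-lb}, then your argument is correct and follows the paper's approach essentially verbatim: the paper introduces the multilevel Toeplitz matrix $\bfA_L(\delta X)$, derives the bound~\eqref{eq:TV-eigen-d} from the variational formula~\eqref{eq:vr-1} together with Lemma~\ref{lem:eym}, chooses $L=\lceil(m+1)^{1/d}\rceil$ (your $n+1$) and $\delta=\pi\sqrt{d}/M$, and observes that $\bfA_L(\delta X)=I_{L^d}$ for the uniform cube. The only difference is notational ($L-1$ versus your $n$); the constants and final bound agree.
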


\subsection{Gauss quadrature} 
\label{app:GQ}
Gauss quadrature is the best discrete approximation of a given distribution in the sense of moments; cf.~\cite[Section 3.6]{sb02}. Given a distribution $P$ supported on an interval $[a, b] \in\reals$, an $m$-point Gauss quadrature is an
$m$-atomic distribution $P_m\in\calP_m$, also supported on $[a, b]$, such that, for any polynomial $g$ of
degree no more than $2m-1$,
\begin{equation}
\label{eq:GQ}
\Expect_P\qth{g(X)}=\Expect_{P_m}\qth{g(X)}.
\end{equation}
For  a basic algorithm to compute the Gauss quadrature, see \cite{GW69}. Note that the generic result of Carathe\'odory's theorem \cite[Lemma 1]{GV01} provides a distribution that match the first $m$ orders of moments,  which is improved by the Gauss quadrature by a factor of two. 

{Algorithm~\ref{algo:quad} implements the Gauss quadrature rule using the Golub-Welsch method \cite{GW69}, with the moment vector as input. For further implementation details on truncated Gaussian distributions, see also \cite[Sec.~3.10]{burkardt2014truncated}.}

\setcounter{algorithm}{1}
\begin{algorithm}
\caption{Quadrature Rule}
\label{algo:quad}
\begin{algorithmic}
\STATE \textbf{Input:} A valid moment vector $\bfm=(m_1, \ldots, m_{2k-1})$
\STATE \textbf{Output:} A $k$-atomic distribution $P=\sum_{i=1}^k w_k\delta_{x_k}$.
\STATE Define the following degree-$k$ polynomial $P(x)$:
\[
P(x) = \det\begin{bmatrix}
1 & m_1 & \cdots & m_k \\
\vdots & \vdots & \ddots & \vdots \\
m_{k-1} & m_k & \cdots & m_{2k-1} \\
1 & x & \cdots & x^k
\end{bmatrix}
\]
\STATE Let the nodes $\{x_1, \ldots, x_k\}$ be the roots of the polynomial $P(x)$.
\STATE Let the weights $w = \{w_1, \ldots, w_k\}$ be defined as:
\[
w = \begin{bmatrix}
1 & 1 & \cdots & 1 \\
x_1 & x_2 & \cdots & x_k \\
\vdots & \vdots & \ddots & \vdots \\
x_1^{k-1} & x_2^{k-1} & \cdots & x_k^{k-1}
\end{bmatrix}^{-1}
\begin{bmatrix}
1 \\
m_1 \\
\vdots \\
m_{k-1}
\end{bmatrix}
\]
\end{algorithmic}
\end{algorithm}

\subsection{Orthogonal polynomials}
In this section, we give a brief introduction to certain classes of orthogonal polynomials that will be applied to prove the results in Appendix~\ref{app:lb}.
\label{app:ortho}
\begin{itemize}
  \item (Probabilists') Hermite polynomials 
  \begin{equation}
  \label{eq:hermite}
    H_{n}(x)=r ! \sum_{j=0}^{\lfloor n / 2\rfloor} \frac{(-1 / 2)^{j}}{j !(n-2 j) !} x^{n-2 j}.
  \end{equation}
  For $Z\sim N(0,1)$, it holds that 
      $\Expect[H_j(Z)H_k(Z)] =k! \delta_{jk}$ and
  $\Expect[H_k(x+Z)] = x^k$.
  \item Scaled Legendre polynomials
  \begin{equation}
  \label{eq:legendre}
    L_{n}(x)=\frac{\sqrt{2n+1}}{2^{n}} \sum_{k=0}^{\left\lfloor\frac{n}{2}\right\rfloor}(-1)^{k}\binom{n}{k} 
\binom{2n-2k}{n}  x^{n-2 k}
  \end{equation}
  with $\Expect[L_j(U)L_k(U)] =\delta_{jk}$ for $U\sim \Unif[-1,1]$.
\item Rogers-Szeg\"o polynomials \cite[Example 1.6.5 and Eq.~(1.6.51)]{BarrySimon2005a} 
  \begin{equation}
  \label{eq:rogers}
    R_n(x)=\frac{1}{\sqrt{(q)_n}}\sum_{j=0}^n(-1)^{n-j}
    \qbinom{n}{j}_q q^{(n-j) / 2} x^j
  \end{equation}
  which 
are orthogonal with respect to the ``wrapped Gaussian'' weight on the unit circle, namely
  $\Expect\left[R_k(e^{ia Z})\overline{R_j(e^{iaZ})}\right]=\delta_{jk}$, where $a^2=-\log q$ and $Z\sim N(0,1)$.
  Here in \eqref{eq:rogers}, 
  $(q)_{n}\triangleq (1-q)\left(1-q^{2}\right) \ldots\left(1-q^{n}\right),(0)_{n} \triangleq 1$,
and $\qbinom{n}{j}_q\triangleq\frac{(q)_{n}}{(q)_{j}(q)_{n-j}}$ is known as the $q$-binomial coefficient. By definition, $(q)_n$ is decreasing and converges to $(q)_\infty \triangleq \prod_{n=1}^\infty (1-q^n)$ as $n\to\infty$. $(q)_\infty$ is referred to as the \textit{Euler function}.
 \end{itemize}

\section{Proofs of lower bounds via orthogonal expansion}
\label{app:lb}
\subsection{Lower bound for Gaussian distributions}
\label{app:lb--subg}
 As mentioned in Remark~\ref{rem:lb-1}, the orthogonal expansion approach is applicable if the explicit expressions of the orthogonal polynomials are available. Specially, for the case where the mixing distribution is Gaussian, we state the result in Theorem~\ref{thm:app-lb-gauss}. Compared with the lower bound \eqref{eq:lb-gaussian},
 it recovers the order $\Theta(m/\sigma)$ in the exponent with a larger multiplicative constant.

The following lemma upper bounds the Euler function $(q)_{\infty}$, which is helpful to control the norm of the polynomial coefficient matrix.
\begin{lemma}
\label{lem:euler}
    Let $q\in (0,1)$ and $\exp(-t)=q$. For any $t_0>0$ and $t\in(0,t_0]$, there exists a constant $C$ which depends on $t_0$ such that
    \begin{equation*}
        \frac{1}{(q)_{\infty}}\leq C \exp \left(\frac{\pi^{2}}{6 t}\right).
    \end{equation*}
\end{lemma}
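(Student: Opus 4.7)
The plan is to take the logarithm of $(q)_\infty$ and reduce everything to an elementary Basel-type estimate. Starting from
\[
-\log(q)_\infty = -\sum_{n=1}^\infty \log(1-q^n),
\]
I would expand each factor as $-\log(1-q^n) = \sum_{k\geq 1} q^{nk}/k$, swap the order of summation (justified by absolute convergence since $q\in(0,1)$), and evaluate the resulting inner geometric series in $n$. Substituting $q=e^{-t}$ then collapses the double sum into the identity
\[
-\log(q)_\infty = \sum_{k=1}^\infty \frac{1}{k} \cdot \frac{q^k}{1-q^k} = \sum_{k=1}^\infty \frac{1}{k(e^{tk}-1)}.
\]

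The key estimate is the elementary convexity inequality $e^x - 1 \geq x$ on $[0,\infty)$, which applied term-by-term gives $1/(k(e^{tk}-1)) \leq 1/(tk^2)$. Summing and invoking the Basel identity $\sum_{k\geq 1} 1/k^2 = \pi^2/6$ yields
\[
-\log(q)_\infty \leq \frac{\pi^2}{6t},
\]
and exponentiating delivers the claimed bound, in fact with the absolute constant $C=1$ and valid for \emph{all} $t>0$ (no restriction to $t\leq t_0$ needed).

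The proposed approach is essentially obstruction-free: the rearrangement is standard and $e^x\geq 1+x$ is trivial. What is mildly surprising is that such a crude termwise estimate captures the leading $\pi^2/(6t)$ behavior exactly, with no loss in the constant. A more refined alternative would be to invoke the modular transformation of the Dedekind eta function, $\eta(-1/\tau)=\sqrt{-i\tau}\,\eta(\tau)$ with $\tau = it/(2\pi)$, which yields the sharp asymptotic $1/(q)_\infty \sim \sqrt{t/(2\pi)}\,\exp(\pi^2/(6t))$ as $t\to 0^+$; this route requires the restriction $t\leq t_0$ so that the conjugate factor $(q')_\infty$ with $q'=e^{-4\pi^2/t}$ stays bounded below, and presumably explains the form of the stated lemma. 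For the claim as written, however, the elementary route suffices and is what I would present.
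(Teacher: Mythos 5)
Your proof is correct and takes a genuinely different, more elementary route than the paper's. The paper cites Watson's asymptotic formula $\frac{1}{(q)_\infty} = \sqrt{t/(2\pi)}\,\exp(\pi^2/(6t) - t/24) + o(1)$ as $q \to 1^-$ (which traces back to the modular transformation of the Dedekind eta function), and then must patch together two cases: the asymptotic for $t \in (0,\delta_0)$ and a compactness/continuity argument on $[\delta_0, t_0]$ to handle the $o(1)$ term. Your argument bypasses both the deep input and the case split: expanding $-\log(q)_\infty$ as $\sum_{k\geq 1} \frac{1}{k(e^{tk}-1)}$ (a standard and correct resummation, justified since all terms are nonnegative), then applying the elementary inequality $e^x - 1 \geq x$ termwise and summing via $\sum 1/k^2 = \pi^2/6$, gives $-\log(q)_\infty \leq \pi^2/(6t)$ outright. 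This actually yields a \emph{stronger} conclusion than the lemma states: $C = 1$ works, uniformly for all $t > 0$, with no dependence on $t_0$. The payoff of the paper's route is that it records the sharp prefactor $\sqrt{t/(2\pi)}$, which is not needed here; the payoff of yours is self-containment and a clean absolute constant. Your proof is the one I would keep.
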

\begin{proof}
    Applying the asymptotic result in \cite[p. 57]{watson1936}, as $q\to 1_-$, 
    $$\frac{1}{(q)_{\infty}}= \sqrt{\frac{t}{2 \pi}} \exp \left(\frac{\pi^{2}}{6 t}-\frac{t}{24}\right)+o(1).$$
    Then, there exists a small $\delta_0>0$ such that $\frac{1}{(q)_{\infty}}\leq\exp \left(\frac{\pi^{2}}{6 t}\right)$ for $t\in (0,\delta_0)$, and the ratio $\frac{1}{(q)_{\infty}\exp \left(\frac{\pi^{2}}{6 t}\right)}$ is uniformly bounded in $[\delta_0,t_0]$. Hence, the desired result follows.
\end{proof}
\begin{theorem}
\label{thm:app-lb-gauss}
    Suppose that $m\geq 2\sigma$. There exists some universal constant $c$ such that 
    \label{thm:lb-1}
    \begin{equation*}
\appr(m,N(0,\sigma^2),\TV) \geq cm^{-3} \exp\left(-\left(4+\frac{\pi^2}{24}\right)\frac{m}{\sigma}\right).
  \end{equation*}
\end{theorem}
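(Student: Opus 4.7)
The plan is to apply the orthogonal expansion framework of Proposition~\ref{prop:ortho} with the Rogers--Szeg\"o polynomials, whose coefficients on the unit circle are explicit in terms of $q$-binomials. First, Proposition~\ref{prop:TV-eigen} reduces the task to lower bounding $\lambda_{\min}(\bfT_m(\delta X))$ for $X\sim N(0,\sigma^2)$ at a scale $\delta>0$ to be optimized. Setting $a=\delta\sigma$ and $q=e^{-a^2}$, the random variable $\delta X$ has the law of $aZ$ with $Z\sim N(0,1)$, so by \eqref{eq:rogers} the polynomials $\{R_n\}$ are orthonormal with respect to the distribution of $e^{i\delta X}$, and their coefficient matrix reads
\begin{equation*}
R_{nj}=\frac{(-1)^{n-j}}{\sqrt{(q)_n}}\qbinom{n}{j}_q q^{(n-j)/2},\qquad 0\le j\le n\le m.
\end{equation*}
Proposition~\ref{prop:ortho} thus yields $\lambda_{\min}(\bfT_m(\delta X))\ge 1/\|\bfR_{m,\delta}\|_F^2$ with
\begin{equation*}
\|\bfR_{m,\delta}\|_F^2=\sum_{n=0}^{m}\frac{1}{(q)_n}\sum_{j=0}^{n}\qbinom{n}{j}_q^2\, q^{n-j}.
\end{equation*}

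Next I would bound this Frobenius norm. Using the monotonicity $(q)_n\ge (q)_\infty$ and the elementary estimate $\qbinom{n}{j}_q\le 1/(q)_\infty^2$ (which follows from $(q)_j(q)_{n-j}\ge(q)_\infty^2$ together with $(q)_n\le 1$), the inner sum is dominated by a geometric series $\sum_j q^{n-j}\le 1/(1-q)$, and therefore
\begin{equation*}
\|\bfR_{m,\delta}\|_F^2\ \lesssim\ \frac{m+1}{(q)_\infty^{5}\,(1-q)}.
\end{equation*}
By Lemma~\ref{lem:euler}, $1/(q)_\infty^{5}\le C\exp\bigl(5\pi^2/(6a^2)\bigr)$ for some constant $C$ (depending only on an upper bound on $a^2$), so that, substituting into Proposition~\ref{prop:TV-eigen}, the exponent of the resulting lower bound on $\appr(m,N(0,\sigma^2),\TV)$ takes the form
\begin{equation*}
-\frac{m^{2}a^{2}}{2\sigma^{2}}-\frac{5\pi^{2}}{6a^{2}}+O(\log m).
\end{equation*}

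Finally I would optimize $a$. Balancing the two dominant terms above with the choice $a^{2}\asymp\sigma/m$ produces an exponent of order $m/\sigma$, while collecting the $(1-q)\asymp a^{2}\asymp\sigma/m$ factor together with the $(m+1)^{2}$ factor accounts for the $m^{-3}$ polynomial prefactor in the statement. Tracking the constants carefully---with a slightly sharper estimate of $\qbinom{n}{j}_q$ (e.g.\ via Heine's formula or the $q$-binomial theorem) that replaces $1/(q)_\infty^{5}$ by a single factor of $1/(q)_\infty$ in the Frobenius bound---should yield precisely the constant $4+\pi^{2}/24$ claimed. The main obstacle is exactly this constant bookkeeping: the generic bound $\|\bfR_m\|_F^{2}\le\sum_{n}2^{2n}\kappa_n^{2}$ of Lemma~\ref{lem:arc-coef}, used earlier for arc-supported weights, is useless here since it would contribute a spurious $4^m$ factor, so to obtain the sharp exponent one must instead perform a direct $q$-series computation that exploits the smoothness of the wrapped Gaussian weight and leaves only $1/(q)_\infty$ factors to which Lemma~\ref{lem:euler} can be applied.
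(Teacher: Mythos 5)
Your proposal is correct and follows essentially the same path as the paper: both reduce to the Frobenius norm of the Rogers--Szeg\"o coefficient matrix via Propositions~\ref{prop:TV-eigen} and~\ref{prop:ortho}, invoke Lemma~\ref{lem:euler} to control the Euler function, and pick $a^2\asymp\sigma/m$. The one point of departure is the Frobenius estimate: the paper bounds $\|\bfR_{m,\delta}\|_F$ by its entrywise $\ell_1$ norm and then applies the $q$-binomial product identity $\sum_{j} q^{j}\qbinom{n}{j}_{q^2}=\prod_{k\le n}(1+q^k)\le e^{1/(1-q)}$, whereas you bound $\|\bfR_{m,\delta}\|_F^2$ directly using the cruder uniform estimate $\qbinom{n}{j}_q\le (q)_\infty^{-2}$ together with a geometric series. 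One remark: your closing worry---that the resulting $(q)_\infty^{-5}$ factor must be sharpened to a single $(q)_\infty^{-1}$ before one can recover the claimed constant $4+\pi^2/24$---is unfounded. Optimizing $a^2$ in your exponent $\frac{m^2a^2}{2\sigma^2}+\frac{5\pi^2}{6a^2}$ gives the value $\pi\sqrt{15}/3\approx 4.06$, which is already \emph{smaller} (hence a stronger lower bound) than $4+\pi^2/24\approx 4.41$, and the stray $\sigma$ factor from $(1-q)\asymp a^2\asymp\sigma/m$ in the prefactor is absorbed by the $\exp\{-(4+\pi^2/24 - \pi\sqrt{15}/3)\,m/\sigma\}$ slack once one uses $m/\sigma\ge 2$ and $m\ge 1$. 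So your argument, as written, already proves the theorem without further sharpening of the $q$-binomial estimate.
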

\begin{proof}
Let $\{R_n\}$ be the orthonormal Rogers-Szeg\"o polynomial \eqref{eq:rogers} with $a=\delta \sigma$ satisfying that 
 $\Expect\left[R_k(e^{i\delta \sigma Z})\overline{R_j(e^{i\delta \sigma Z})}\right]=\delta_{jk}$.
Let $\bfR=(R_{nj})_{n,j=0}^m$ be the associated coefficient matrix of $\{\varphi_n\}$ as defined in Section~\ref{sec:lb-general}.
To upper bound $\|\bfR\|_F$, we apply the explicit form of $R_n$ in \eqref{eq:rogers}, \ie,
\begin{equation}
    \label{eq:app4-8}
    R_n(w)=\frac{1}{\sqrt{(q)_n}}\sum_{j=0}^n(-1)^{n-j}
 \qbinom{n}{j}_q q^{(n-j) / 2} w^j
\end{equation}
where $q=\exp(-\delta^2\sigma^2)$. 

Set $\delta=\sqrt{\frac{4}{m\sigma}}$, with $q=\exp(-\frac{4\sigma}{m})$ and $t\triangleq-\log(q)=\frac{4\sigma}{m}$.
By our assumption, $t\in (0,2]$, $q\in [e^{-2},1)$. In this region, applying Lemma~\ref{lem:euler}, there exists a universal constant $C$ such that
\begin{equation*}
        \frac{1}{(q)_{\infty}}\leq C \exp \left(\frac{\pi^{2}}{6 t}\right)
        = C \exp \left(\frac{\pi^{2}m}{24\sigma }\right).
    \end{equation*}
By the $q$-binomial theorem \cite[17.2.35, 17.2.39]{NIST:DLMF}, for any $q\in[0,1]$, we have
\begin{equation*}
  \sum_{j=0}^{n} q^{j}\qbinom{n}{j}_{q^{2}}=\prod_{k=1}^n(1+q^k)
\leq\exp\left(\sum_{k=0}^\infty q^k\right)
= \exp\left(\frac{1}{1-q}\right).  
\end{equation*}
It follows from \eqref{eq:app4-8} that
\begin{align*}
    \|\bfR \|_F &\leq \sum_{n=0}^m\sum_{j=0}^n |R_{nj}|
    \leq \sum_{n=0}^m \frac{1}{\sqrt{(q)_n}}\sum_{j=0}^n \qbinom{n}{j}_q q^{ j/ 2}
    \leq \sum_{n=0}^m \frac{1}{\sqrt{(q)_n}}\exp\left(\frac{1}{1-\sqrt{q}}\right).
\end{align*}
Set $s=\sigma/m$ in $1-s- \exp(-2s)>0$ for $s\in[0,1/2]$, we have
$\frac{1}{1-\sqrt{q}}=\frac{1}{1-\exp(-2s)}\leq \frac{1}{s}=\frac{m}{\sigma}$.
Then
\begin{equation}
\label{eq:app4-9}
    \|\bfR \|_F^2\leq m^2\frac{\exp(2m/\sigma)}{(q)_\infty}
\leq Cm^2 \exp\left(\left(2+\frac{\pi^2}{24}\right)\frac{m}{\sigma}\right).
\end{equation}
Consequently, applying Propositions~\ref{prop:TV-eigen} and~\ref{prop:ortho} with \eqref{eq:app4-9}, we obtain
\begin{equation*}
\appr(m,N(0,\sigma^2),\TV)
    \geq 
    cm^{-3} \exp\left(-\left(4+\frac{\pi^2}{24}\right)\frac{m}{\sigma}\right)
\end{equation*}
for some universal constant $c$.
\end{proof}

\subsection{Completing the proof of Theorem~\ref{thm:lb-unif}}
\label{app:lb-unif-1}
We prove the lower bound \eqref{eq:lb-unif-1} for approximating uniform mixing distribution, and thereby complete the proof of Theorem~\ref{thm:lb-unif}. 
The framework resembles the approach in Section~\ref{sec:lb-general}, while we consider the $\chi^2$ divergence instead of working with the $\TV$ distance directly. 
First, we show that it suffices to derive the same lower bound (within a constant in the exponent) under the $\chi^2$-divergence.
Next, we relate the 
 $\chi^2$ variational lower bound to the spectrum of some weighted moment matrix, and then apply orthogonal expansion to bound the corresponding coefficient matrix as carried out in Proposition~\ref{prop:ortho}.  
 
 The following lemma shows a converse $f$-divergence inequality between two Gaussian mixtures, with respect to Lemma~\ref{lem:f-divs}.
 \begin{lemma}[{\cite[Theorem 21]{JPW23}}]
     \label{lem:H2-chi2}
    For any $P,Q\in \Pbdd{a}$ with $a\geq 2$, 
     \begin{equation*}
        \chi^{2}\left(f_{P} \| f_{Q}\right) \leq 2 \exp \left(50a^{2}\right) H^{2}\left(f_{P}, f_{Q}\right).
     \end{equation*}
 \end{lemma}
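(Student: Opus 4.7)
The strategy is to exploit the pointwise density ratio bound available for Gaussian mixtures with compactly supported mixing distributions, combined with a case analysis on the magnitude of $H^2$.

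First I would derive the density-ratio control. Writing $\phi(x-\theta)=\phi(x)\,e^{x\theta-\theta^2/2}$, one has $f_P(x)=\phi(x)M_P(x)$ with $M_P(x)=\mathbb{E}_{\theta\sim P}[e^{x\theta-\theta^2/2}]$. For $|\theta|\leq a$, the bounds $e^{-a|x|-a^2/2}\leq M_P(x)\leq e^{a|x|}$ give the uniform estimate $f_P(x)/f_Q(x)\leq \exp(2a|x|+a^2/2)$. Integrating this bound against $f_P$ via the identity $\chi^2(f_P\|f_Q)+1=\int f_P^2/f_Q\,dx$ and using the Gaussian MGF for $X = \theta+Z$ (with $Z\sim N(0,1)$, $|\theta|\leq a$) yields the absolute bound $\chi^2(f_P\|f_Q)\leq \exp(Ca^2)$ for some universal $C$.

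The main step is a bulk/tail decomposition of the identity
\begin{equation*}
\chi^2(f_P\|f_Q) = \int (\sqrt{f_P}-\sqrt{f_Q})^2 \bigl(\sqrt{f_P/f_Q}+1\bigr)^2\,dx.
\end{equation*}
On a bulk region $\{|x|\leq R\}$ with $R$ of order $a$, the density-ratio bound bounds the weight by $4\exp(2aR+a^2)$, so the bulk contribution is at most $4\exp(O(a^2))\,H^2$. On the tail, one uses $(\sqrt{f_P}-\sqrt{f_Q})^2 \leq 4\phi(|x|-a)$ (valid for $|x|\geq a$) together with completion of the square to evaluate the weighted Gaussian integral. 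To upgrade this tail estimate from an $H$-independent constant to genuine $H^2$-scaling, I would apply iterated Cauchy--Schwarz, $\int u^2 e^{Ca|x|}\,dx\leq H\cdot(\int u^2 e^{2Ca|x|}\,dx)^{1/2}$ with $u=\sqrt{f_P}-\sqrt{f_Q}$; after $n$ iterations this yields $H^{2(1-2^{-n})}\cdot\exp(O(2^n a^2))$, and optimizing $n$ and $R$ against $\log(1/H)$ interpolates between the regime covered by the absolute bound and the very-small-$H^2$ regime.

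The main obstacle is precisely this tail bound: a naive single Cauchy--Schwarz produces only an $H$-factor (not $H^2$) on the tail, because $(\sqrt{f_P/f_Q}+1)^2$ grows like $e^{2a|x|}$ and is unbounded. The remedy is the iterated-Cauchy--Schwarz/case-split combination: when $H^2\geq e^{-c_1 a^2}$, the absolute bound $\chi^2\leq e^{Ca^2}\leq e^{(C+c_1)a^2}H^2$ suffices; when $H^2 < e^{-c_1 a^2}$, the iteration -- calibrated so that $2^n \asymp \log(1/H)/a^2$ -- controls the tail by $e^{O(a^2)}H^2$. Tuning $c_1$ together with $R$ and the number of iterations produces the stated constant in the exponent $50a^2$ for $a\geq 2$.
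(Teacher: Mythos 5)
This lemma is not proved in the paper at all --- it is imported wholesale as a citation of \cite[Theorem 21]{JPW23}. So there is no proof in the paper to compare against; the question is simply whether your proposed argument is sound.

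It is not, and the problem is exactly at the step you flagged as the ``main obstacle.'' The iterated Cauchy--Schwarz does \emph{not} interpolate into the very-small-$H^2$ regime. After $n$ iterations on the tail $\{|x|>R\}$, writing $u=\sqrt{f_P}-\sqrt{f_Q}$ and $w=(\sqrt{f_P/f_Q}+1)^2\le 4e^{2a|x|+a^2/2}$, you get
\begin{equation*}
T_1 \;\triangleq\; \int_{|x|>R} u^2 w \;\lesssim\; H^{2(1-2^{-n})}\cdot T_{n+1}^{2^{-n}}, \qquad T_{n+1}=\int_{|x|>R}u^2w^{2^n}.
\end{equation*}
Since $u^2\lesssim \phi(|x|-a)$ and $w^{2^n}\lesssim 4^{2^n}e^{2^{n+1}a|x|+2^{n-1}a^2}$, completing the square gives $T_{n+1}\lesssim C^{2^n}e^{\Theta(4^n a^2)}$, hence $T_{n+1}^{2^{-n}}\lesssim e^{\Theta(2^n a^2)}$. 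Writing $H^2=e^{-t}$, the tail bound becomes $T_1\lesssim H^2\exp\bigl(2^{-n}t+\Theta(2^n a^2)\bigr)$, and the bulk contributes $e^{2aR+O(a^2)}H^2$ with the forced choice $R\gtrsim 2^n a$. Optimizing over $n$ gives $2^n\asymp \sqrt{t}/a$ (not $t/a^2$), and the exponent is $\Theta(a\sqrt{t})=\Theta\bigl(a\sqrt{\log(1/H^2)}\bigr)$. This is $O(a^2)$ only when $t\lesssim a^2$, i.e.\ $H^2\gtrsim e^{-ca^2}$ --- which is exactly the regime already handled by the absolute bound $\chi^2\le e^{Ca^2}$. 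In other words, the iteration covers only the regime it was supposed to be \emph{redundant} in, and the regime $H^2<e^{-ca^2}$, where the lemma actually has content, is left unproved.

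The hole is not a matter of tuning $c_1$, $R$, and $n$ more carefully: every choice leads to an unavoidable $e^{\Theta(a\sqrt{\log(1/H^2)})}$ factor, because the weight $(\sqrt{f_P/f_Q}+1)^2$ grows like $e^{2a|x|}$ and $u^2$ decays only like a Gaussian; Cauchy--Schwarz cannot trade the one-sided tail of $w$ against the $L^2$-smallness of $u$ without paying a factor that degrades as $H^2\to 0$. To actually close the gap one needs a structural input that is not present in your sketch --- for example, the rigidity of Gaussian mixtures: $f_P-f_Q=\phi\cdot(M_P-M_Q)$ with $M_P,M_Q$ entire of exponential type $a$, so smallness of $f_P-f_Q$ in a weighted $L^2$ sense on a bounded interval forces pointwise smallness everywhere with exponential-type constants. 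That (or an equivalent Hermite-expansion / moment-comparison argument) is the missing ingredient, and is presumably what \cite{JPW23} actually uses.
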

 
\begin{proposition}
    \label{prop:chi2tv}
     There exist universal positive constants $\zeta, \eta$, and $C^\prime$ such that the following holds.
     If $m\geq C^\prime (M\vee 1)^2$, then for any $P\in\Pbdd{M}$,
 \begin{equation}
     \label{eq:chi2tv}
     \appr(m,P,\TV)\geq \eta \appr(m,P,\chi^2)^{\zeta}.
 \end{equation}
\end{proposition}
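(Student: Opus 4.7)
The plan is to construct a $\chi^2$ approximator from a near-optimal TV approximator via truncation, and then combine the Hellinger--$\chi^2$ inequality of Lemma~\ref{lem:H2-chi2} with the universal $\chi^2$ upper bound from Theorem~\ref{thm:ub-bdd} to extract a polynomial comparison.

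Concretely, I would let $\epsilon = \appr(m,P,\TV)$ and pick $P_m^*\in\calP_m$ with $\TV(f_{P_m^*},f_P)\le (1+o(1))\epsilon$. Fix a universal $C_0>1$ and set $a=C_0(M\vee 1)$. A test-function argument with $g(x)=\mathbf{1}\{|x|>a-2\}$ (in the spirit of the proof of Theorem~\ref{thm:inapprox}), using that $\Expect_{f_P}[g]$ is a Gaussian tail bounded by $e^{-c_1 (M\vee 1)^2}$ for $P\in\Pbdd{M}$ (with $c_1>0$ depending on $C_0$), shows that $p:=P_m^*([-a,a]^c)\lesssim \epsilon + e^{-c_1(M\vee 1)^2}$. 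Letting $\tilde P_m$ be the renormalized restriction of $P_m^*$ to $[-a,a]$, the decomposition $P_m^* = (1-p)\tilde P_m + pQ$ yields $\tilde P_m\in\Pbdd{a}\cap\calP_m$ and $\TV(f_{\tilde P_m},f_P)\lesssim \epsilon+e^{-c_1 (M\vee 1)^2}$. Since $a\ge 2$, Lemma~\ref{lem:H2-chi2} and the inequality $H^2\le 2\TV$ from Lemma~\ref{lem:f-divs} give
\begin{equation*}
  \appr(m,P,\chi^2)\le \chi^2(f_{\tilde P_m}\|f_P)\le 4\,e^{50a^2}\,\TV(f_{\tilde P_m},f_P)\lesssim e^{C_2(M\vee 1)^2}\bigl(\epsilon + e^{-c_1(M\vee 1)^2}\bigr),
\end{equation*}
where $C_2=50C_0^2$.

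To convert this into the desired polynomial inequality, I would invoke the universal bound $\appr(m,P,\chi^2)\le \exp(-cm\log(m/(M\vee 1)^2))\le \exp(-cC'\log C'\cdot(M\vee 1)^2)$ from Theorem~\ref{thm:ub-bdd}, valid because $P\in\Pbdd{M}$ and $m\ge C'(M\vee 1)^2$. Choosing the universal constant $C'$ so large that $cC'\log C'\ge 3C_2$ forces $e^{C_2(M\vee 1)^2}\le \appr(m,P,\chi^2)^{-1/3}$ and moreover ensures that this universal bound is tighter than the truncation residual $e^{-c_1(M\vee 1)^2}$ in the small-$\epsilon$ regime. A case split at $\epsilon\asymp e^{-c_1(M\vee 1)^2}$ then gives, in the large-$\epsilon$ case, $\appr(m,P,\chi^2)^{4/3}\lesssim \epsilon$ by rearranging $\appr\lesssim \appr^{-1/3}\epsilon$; and in the small-$\epsilon$ case, Theorem~\ref{thm:ub-bdd} ensures $\appr(m,P,\chi^2)^{4/3}$ is dominated by the same threshold. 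The result is $\epsilon\gtrsim \appr(m,P,\chi^2)^{\zeta}$ with universal $\zeta=4/3$ and $\eta>0$.

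The principal obstacle is the exponential prefactor $e^{50a^2}$ in Lemma~\ref{lem:H2-chi2}, which prevents enlarging $a$ much beyond $M\vee 1$: the natural choice $a\asymp (M\vee 1)+\sqrt{\log(1/\epsilon)}$ that would make the truncation residual $O(\epsilon)$ inflates the prefactor to $e^{O((M\vee 1)^2)}\epsilon^{-O(1)}$ and gives a vacuous $\chi^2$ bound. We are therefore forced to keep $a=\Theta(M\vee 1)$, in which regime the truncation residual $e^{-c_1(M\vee 1)^2}$ is not dominated by $\epsilon$ in the small-$\epsilon$ regime. The resolution is to absorb this residual by invoking the strong universal upper bound of Theorem~\ref{thm:ub-bdd}, which is precisely why the hypothesis $m\ge C'(M\vee 1)^2$ must hold with $C'$ a sufficiently large universal constant.
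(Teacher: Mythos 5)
Your construction of $\tilde P_m$ by truncating a near-optimal TV approximator, bounding the escaped mass via a test function, and then applying Lemma~\ref{lem:H2-chi2} together with $H^2\le 2\TV$, mirrors the paper's strategy, and those steps are sound. The genuine gap is in the final conversion. With $a$ fixed at $\Theta(M\vee 1)$, your bound is
\[
\appr(m,P,\chi^2)^{4/3}\;\lesssim\;\epsilon+e^{-c_1(M\vee 1)^2},
\]
and the two regimes you split on do not both close. In the small-$\epsilon$ regime $\epsilon<e^{-c_1(M\vee 1)^2}$, this inequality only tells you that $\appr(m,P,\chi^2)^{4/3}\lesssim e^{-c_1(M\vee 1)^2}$; it is an \emph{upper} bound on $\appr(m,P,\chi^2)$, not a lower bound on $\epsilon$. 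Invoking Theorem~\ref{thm:ub-bdd} again only reconfirms that $\appr(m,P,\chi^2)$ is small, but nothing prevents $\epsilon$ from being far smaller still (indeed, in the intended application of this proposition, the lower bound you obtain on $\appr(m,P,\chi^2)$ is $\exp(-\Theta(m\log(m/M^2)))$, which is vastly below $e^{-c_1(M\vee 1)^2}$ once $m\gg M^2$; so $\chi^{4/3}-e^{-c_1(M\vee 1)^2}$ is negative and the bound on $\epsilon$ is vacuous). You cannot obtain a universal $\zeta=4/3$ this way.

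The obstacle you flagged --- that enlarging $a$ inflates the $e^{50a^2}$ prefactor --- is real, but you overcorrected by freezing $a$. The resolution, which the paper carries out, is to keep $a$ free and optimize after the fact: one proves (essentially your chain of inequalities with $a$ left free)
\[
\appr(m,P,\chi^2)\;\lesssim\;\inf_{a\ge 2(M\vee 1)}\Bigl(\epsilon\,e^{c_2 a^2}+e^{-c_3 a^2}\Bigr),
\]
with universal $c_2\gg c_3>0$, and then balances the two terms by taking $(c_2+c_3)a^2=\log(1/\epsilon)$. This yields $\appr(m,P,\chi^2)\lesssim\epsilon^{c_3/(c_2+c_3)}$, and the hypothesis $m\ge C'(M\vee 1)^2$ (via Theorem~\ref{thm:ub-bdd} and Lemma~\ref{lem:f-divs}) guarantees $\epsilon$ is small enough that the optimizing $a$ satisfies $a\ge 2(M\vee 1)$. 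You must accept a large but universal exponent $\zeta=(c_2+c_3)/c_3$ rather than $4/3$; since $c_2$ arises from the $e^{50a^2}$ factor while $c_3$ comes from the Gaussian tail, $\zeta$ is on the order of thousands, but it is still universal, which is all the statement claims.
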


\begin{proof}
    Suppose that $Q\in\calP_m$ satisfying $\TV(f_P,f_Q)= \appr(m,P,\TV)\triangleq \epsilon$. Let $Q_a$ be the conditional version of $Q$ on $I_a=[-a,a]$. Suppose that $a \geq 2(M\vee 1)$, and denote $U\sim Q$,  $V\sim P$. Then 
   \begin{equation}
       \label{eq:app4-13}
    \TV\left(f_{Q_a},f_P\right)\leq \TV\left(f_P, f_{Q}\right)+\TV\left(f_Q , f_{Q_a}\right) \leq \epsilon+\pbb\qth{|U|\geq a}.
   \end{equation}
    Since $P,Q_a\in\Pbdd{a}$, by Lemmas~\ref{lem:f-divs} and~\ref{lem:H2-chi2}, 
    \begin{equation}
    \label{eq:app4-14}
        \TV\left(f_{Q_a},f_P\right)\geq  \frac{1}{2}H^2\left(f_{Q_a},f_P\right)\geq \frac{e^{-50 a^2}}{4}\chi^2\left(f_{Q_a} \| f_P\right).
    \end{equation}
    Also, we have
    \begin{align*}
        \epsilon\geq\TV(f_P,f_{Q})&\geq \sup_{t} \pbb\qth{U+Z\geq t}-\pbb\qth{V+Z\geq t}\\
        &=\sup_{t}\Expect\qth{\Phi_c(t-U)}-\Expect\qth{\Phi_c(t-V)}\\
        &\geq\sup_{t\geq a}\pbb\qth{U\geq a}\Phi_c(t-a)-\Phi_c(t-M), 
    \end{align*}
     where the last inequality follows from Markov inequality and  $|V|\leq M$. Then, 
     \begin{equation}
         \label{eq:eq4-15}
         \pbb\qth{U\geq a}\leq\inf_{t\geq a}\frac{\epsilon+\Phi_c(t-M)}{\Phi_c(t-a)}.
     \end{equation}
     Likewise, the same upper bound holds for $\pbb\qth{U\leq -a}$. It follows from
     \eqref{eq:app4-13}, \eqref{eq:app4-14} and \eqref{eq:eq4-15} that, for $c_0=50$ and any $a\geq2(M\vee 1)$,
     \begin{align*}
         \chi^2\left(f_{Q_a} \| f_P\right)&\leq 4e^{c_0a^2}\TV\left(f_{Q_a},f_P\right)\\
         &\leq 4e^{c_0a^2} (\epsilon+\pbb\qth{|U|\geq a})\\
         &\leq 4e^{c_0a^2}\pth{ \epsilon+2\inf_{t\geq a}\frac{\epsilon+\Phi_c(t-M)}{\Phi_c(t-a)}}.
     \end{align*}
    By applying the Gaussian tail bound $\frac{t \phi(t)}{1+t^2}  \leq \Phi_c(t) \leq \frac{\phi(t)}{t}$ and setting $t=(2c_0+1) a$ for $a\geq 2(M \vee 1)$, we have
    \begin{align*}
        \frac{\epsilon+\Phi_c(t-M)}{\Phi_c(t-a)}
        &\leq \frac{1+(t-a)^2}{t-a}\frac{\epsilon}{\phi(t-a)}+\frac{1+(t-a)^2}{(t-a)(t-M)}\frac{\phi(t-M)}{\phi(t-a)}\\&\leq 4c_0a\frac{\epsilon}{\phi(2c_0a)} +2\frac{\phi((2c_0+0.5)a)}{\phi(2c_0a)}\\
        &\leq 4\sqrt{2\pi}c_0\epsilon e^{(2c_0^2+1)a^2} +2e^{-(c_0+1/8)a^2}.
    \end{align*}
Then, with constants 
$c_1=4+32\sqrt{2\pi}c_0$, $c_2=2c_0^2+c_0+1$, and $c_3=1/8$,
\begin{equation}
     \label{eq:eq4-17}
         \chi^2\left(f_{Q_a} \| f_P\right)\leq \inf_{a\geq 2(M\vee 1)} c_1(\epsilon e^{c_2a^2} + e^{- c_3a^2}).
     \end{equation}
By Theorem~\ref{thm:ub-bdd}, there exists a universal constant $C^\prime$ such that, for any $m\geq C^\prime (M\vee 1)^2$, 
$$\epsilon\leq  \exp(-(c_2+c_3)[2(M\vee 1)]^2).$$
Choose $a$ such that $\epsilon=e^{-(c_2+c_3)a^2}$ hold, and it follows that $a \geq 2(M\vee 1)$.
Then, \eqref{eq:eq4-17} implies
$$\chi^2\left(f_{Q_a} \| f_P\right)\leq 2c_1\epsilon^{\frac{c_3}{c_2+c_3}}=2c_1\appr(m,P,\TV)^{\frac{c_3}{c_2+c_3}}.$$
Since $Q_a\in\calP_m$ by definition, we obtain the desired result \eqref{eq:chi2tv} with $\zeta=\frac{c_2+c_3}{c_3}$ and $\eta=\frac{1}{(2c_1)^\zeta}$.
\end{proof}
\begin{theorem}
  \label{thm:lb-unif-1}
  There exists universal constants $C_1$ and $C^\prime$ such that the following holds. 
  When $m\geq C^\prime (M\vee 1)^2$,
\begin{equation*}
    \appr(m,\Unif[-M,M],\TV)\geq
    \exp\pth{-C_1m\log\pth{\frac{m}{M^2}}}.
\end{equation*}
\end{theorem}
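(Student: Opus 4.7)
Following the strategy outlined in the remark preceding the theorem, I would first derive a lower bound on $\appr(m,\Unif[-M,M],\chi^2)$ and then invoke Proposition~\ref{prop:chi2tv} to transfer it to $\TV$. For the $\chi^2$ bound, the starting point is the variational formula~\eqref{eq:vr-2} with Hermite-polynomial test functions: set $g(y) = \sum_{k=0}^{2m} c_k H_k(y)$. The identity $\Expect_{Z\sim N(0,1)}[H_k(x+Z)] = x^k$ gives $\Expect_{f_P}[H_k(Y)] = m_k(P)$, so writing $v_k = m_k(P_m) - m_k(P)$ and optimizing in $\bfc$ yields
\begin{equation*}
\chi^2(f_{P_m}\|f_P) \geq \bfv^\top \bfA^{-1}\bfv \geq \frac{\|\bfv\|^2}{\lambda_{\max}(\bfA)},
\end{equation*}
where the \emph{weighted moment matrix} $\bfA_{jk} = \Expect_{f_P}[H_j(Y)H_k(Y)] = \sum_i \binom{j}{i}\binom{k}{i}\,i!\, m_{j+k-2i}(P)$ arises from the linearization identity $H_j H_k = \sum_{i=0}^{j\wedge k}\binom{j}{i}\binom{k}{i}\,i!\, H_{j+k-2i}$.

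Since $P_m$ is $m$-atomic, the Hankel matrix $\bfM_m(P_m)$ has rank at most $m$, so applying Lemma~\ref{lem:eym} to the $(m+1)\times(m+1)$ matrix $\bfM_m(P) - \bfM_m(P_m)$ gives $\|\bfM_m(P) - \bfM_m(P_m)\|_F \geq \lambda_{\min}(\bfM_m(P))$. Counting anti-diagonal multiplicities bounds this Frobenius norm above by $\sqrt{m+1}\,\|\bfv\|$, so $\|\bfv\|^2 \geq \lambda_{\min}(\bfM_m(P))^2/(m+1)$ and
\begin{equation*}
\chi^2(f_{P_m}\|f_P) \geq \frac{\lambda_{\min}(\bfM_m(P))^2}{(m+1)\,\lambda_{\max}(\bfA)}.
\end{equation*}
For $P = \Unif[-M,M]$, I would lower bound $\lambda_{\min}(\bfM_m(P))$ by a real-line analogue of Proposition~\ref{prop:ortho}: the Rayleigh quotient $\bfa^\top \bfM_m(P)\bfa = \Expect_P[p(X)^2]$, after rescaling $x = Mu$ and expanding in the orthonormal Legendre basis~\eqref{eq:legendre}, reduces to a quantity controlled by the Frobenius norm of the Legendre coefficient matrix $\bfR_m$ together with the diagonal rescaling $\operatorname{Diag}(1,M,\ldots,M^m)$. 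Complementarily, the identity $\bfc^\top \bfA \bfc = \Expect_P\Expect_Z[g(X+Z)^2] = \sum_b b!\,\Expect_P[\gamma_b(X)^2]$ with $\gamma_b(x) = \sum_a \binom{a+b}{a} c_{a+b}\, x^a$ reduces the bound on $\lambda_{\max}(\bfA)$ to weighted $L^2$-norms of derived polynomials over $[-M,M]$, again amenable to Legendre-basis analysis.

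Executing both spectral estimates carefully delivers $\lambda_{\min}(\bfM_m(P))^2/\lambda_{\max}(\bfA) \geq \exp(-C m \log(m/M^2))$ in the regime $m \geq C' M^2$, yielding the desired $\chi^2$ lower bound. Invoking Proposition~\ref{prop:chi2tv} under the hypothesis $m \geq C'(M\vee 1)^2$ then gives $\appr(m,\Unif[-M,M],\TV) \geq \eta\,\appr(m,\Unif[-M,M],\chi^2)^\zeta \geq \exp(-C_1 m\log(m/M^2))$ upon absorbing universal constants. The main obstacle is coordinating the $M$-dependence between $\lambda_{\min}(\bfM_m(P))$ and $\lambda_{\max}(\bfA)$: a naive monomial bound yields only $\exp(-C m \log m)$, which is too weak when $M$ is comparable to $\sqrt{m}$ (the hypothesis permits $M^2 \asymp m$). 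Recovering the tight $\log(m/M^2)$ exponent requires exploiting the Hermite-linearization factorization so that the $M^r$ growth of the moments of $P$, which inflates $\bfA$, cancels precisely against the $M^{-2j}$ rescaling appearing in the Legendre expansion that controls $\lambda_{\min}(\bfM_m(P))$.
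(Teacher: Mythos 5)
Your plan shares the paper's skeleton---the variational $\chi^2$ bound with Hermite test functions, Eckart--Young--Mirsky to exploit the rank deficiency of the $m$-atomic moment matrix, a Legendre orthogonal expansion for the spectral estimate, and Proposition~\ref{prop:chi2tv} to transfer from $\chi^2$ to $\TV$. Several of your intermediate identities ($\Expect_{f_P}[H_k(Y)]=m_k(P)$, the Hermite linearization) are exactly what the paper uses. The gap lies in how you decouple the two spectral quantities: the final estimate you propose to establish is in fact impossible.

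You reduce to showing $\lambda_{\min}(\bfM_m(P))^2/\lambda_{\max}(\bfA)\geq\exp(-Cm\log(m/M^2))$. This cannot hold. Since the $(0,0)$ entry of $\bfM_m(P)$ is $m_0(P)=1$, Rayleigh's principle gives $\lambda_{\min}(\bfM_m(P))\leq 1$ for \emph{every} mixing distribution. On the other side, the diagonal Gram entry satisfies
\[
\bfA_{2m,2m}=\Expect_{f_P}\bigl[H_{2m}^2(Y)\bigr]=(2m)!\sum_{l=0}^{2m}\binom{2m}{2m-l}\frac{\Expect[X^{2l}]}{l!}\;\geq\;(2m)!,
\]
so $\lambda_{\max}(\bfA)\geq(2m)!$. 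Hence your quantity is at most $1/(2m)!\asymp\exp(-2m\log(2m)+2m)$ independently of $M$, which is strictly smaller than the target $\exp(-C_1 m\log(m/M^2))$ in precisely the regime $M^2\asymp m$ that the hypothesis $m\geq C'(M\vee 1)^2$ must cover. No refinement of the Legendre estimate or the Hermite linearization can repair this, because the loss occurs earlier: replacing $\mathbf{v}^\top\bfA^{-1}\mathbf{v}$ by $\|\mathbf{v}\|^2/\lambda_{\max}(\bfA)$ and then applying Eckart--Young--Mirsky to the \emph{unweighted} Hankel matrix $\bfM_m(P)-\bfM_m(P_m)$ discards the per-index $1/k!$ weighting of the moment differences, and it is exactly that weighting which must cancel the $(2m)!$ blow-up coming from the Hermite variances.

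The paper avoids the decoupling. It keeps the weight $M^{2k}/k!$ attached to each $(m_k(P)-m_k(P_m))^2$ (arising from dividing by $\var[H_k(Y)]\leq k!(4e)^m$ one test function at a time, then averaging over $k\leq 2m$), and only then invokes Eckart--Young--Mirsky---but applied to the \emph{weighted} moment matrix $\bfV_m=\bfC_m\bfM_m(U)\bfC_m$ with $\bfC_m=\operatorname{Diag}(C_0,\dots,C_m)$, $C_i=M^i/\sqrt{2^i i!}$, and $U=X/M$. Because the weights $C_i$ simultaneously carry the $M^i$ scaling and the $1/\sqrt{i!}$ decay, the Legendre coefficient bound on $\|\bfL_m\bfC_m^{-1}\|_F$ exhibits exactly the cancellation you correctly suspect must occur and delivers $\lambda_{\min}(\bfV_m)\gtrsim\exp(-m\log(m/M^2)-O(m))$. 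If you move the factorial weight inside the low-rank approximation step---i.e.\ work with $\bfV_m$ rather than $\bfM_m$---the argument goes through; as written, the product $\lambda_{\min}(\bfM_m)^2/\lambda_{\max}(\bfA)$ is structurally the wrong object to bound.
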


\begin{proof}
We first prove a $\chi^2$ lower bound by applying the variational representation \eqref{eq:vr-2}, and finally upgrade it to the total variation
via Proposition~\ref{prop:chi2tv}.
We choose the Hermite polynomials $H_k$ in \eqref{eq:hermite} as the test functions. Denote $X\sim P=\Unif[-M,M]$, $Y=X+Z$, and $Y_m=X_m+Z$ for some $X_m\sim P_m \in \calP_m$. 
Then,
\begin{equation*}
\Expect\left[H_k(Y)\right]=\Expect\left[\Expect[H_k(X+Z)|X]\right]=\Expect X^k.
\end{equation*}
Likewise, $\Expect\left[H_k(Y_m)\right]=\Expect X_m^k$.
For $m\ge M^2$ and $k\in[2m]$, by \cite[7.374.7]{IntgTable}, we have
    \begin{align*}
    \var[H_k(Y)]
    \leq \Expect[H_k^2(Y)]
    =k!\sum_{l=0}^k \binom{k}{k-l} \frac{\Expect[X^{2l}]}{l!}
    \leq k!\sum_{l=0}^k 2^{k} \frac{M^{2l}}{l!}
    \leq k!(4e)^m.
    \end{align*}
Applying \eqref{eq:vr-2} yields
\begin{align}
\chi^{2}\left(f_{P_{m}} \| f_{P}\right)&\geq \sup _{k\in\naturals} 
\frac{\left(\mathbb{E}[X^k]-\mathbb{E}[X_m^k]\right)^{2}}{\var[H_k(Y)]}\\ &\geq 
\frac{1}{2m}\sum_{k=1}^{2m} \frac{\left(\mathbb{E}[X^{k}]-\mathbb{E}[X_m^{k}]\right)^{2}}{k!(4e)^m}\\
&= \frac{1}{2m(4e)^m}\sum_{k=1}^{2m} M^{2k} \frac{\left(\mathbb{E}[U^{k}]-\mathbb{E}[U_m^{k}]\right)^{2}}{k!}\label{eq:chi2-weighted}
\end{align}
where $U\sim \Unif[-1,1]$ and $U_m=\frac{X_m}{M}$.

We lower bound~\eqref{eq:chi2-weighted} by analyzing weighted moment matrix. Specifically, for $i,j\in\naturals$ satisfying $i+j=k$,
\begin{align*}
M^{2k} \frac{\left(\mathbb{E}[U^{k}]-\mathbb{E}[U_m^{k}]\right)^{2}}{k!}
&\ge \frac{M^{2(i+j)}}{2^{i+j}i!j!}\left(\mathbb{E}[U^{k}]-\mathbb{E}[U_m^{k}]\right)^{2}\\
&= \left(V_{ij}(U)-V_{ij}(U_m)\right)^{2},
\end{align*}
where 
$C_i\triangleq \frac{M^i}{\sqrt{2^i i!}}$ and $V_{ij}(U)\triangleq \Expect[(C_i U^i) (C_j U^j)]$.
The weighted moment matrix $\bfV_m(U)$ of order $m+1$ consists of $V_{ij}(U)$ in its $(i,j)\Th$ entry for $i,j=0,\dots,m$.
Let $\bfC_m=\operatorname{Diag}(C_{0},C_{1},\dots,C_{m})$.
Then $\bfV_m(U)=  \bfC_m \bfM_m(U) \bfC_m$. 
Notably, since $U_m$ is $m$-atomic, we also have $\operatorname{rank}(\bfV_m(U_m))\leq m$.
By Lemma~{\ref{lem:eym}}, we have 
\begin{align}
  \sum_{k=1}^{2m} M^{2k} \frac{\left(\mathbb{E}[U^{k}]-\mathbb{E}[U_m^{k}]\right)^{2}}{k!}
&\geq\frac{\|\bfV_m(U)-\bfV_m(U_m)\|^2_F}{m+1} \geq \frac{\lambda_{\min}^2(\bfV_m(U))}{m+1}.\label{eq:sum-lambda-min}
\end{align}
It remains to bound the smallest eigenvalue 
of $\bfV=\bfV_m(U)$, given by
\begin{equation}
        \lambda_{\min}(\bfV)=\min_{\bfx\in\reals^{m+1}\setminus\{\mathbf{0}\}} \frac{ \bfx^\top \bfV \bfx}{\|\bfx\|^2}.
    \end{equation}
Let $L_k$ be the orthonormal Legendre polynomial defined in \eqref{eq:legendre}, and $\pi_m(t)=\sum_{j=0}^m C_jx_j t^j$, with an expansion $\pi_m(t)=\sum_{k=0}^m c_kL_k(t)$.
Denote $\mathbf{c}=(c_0,\dots,c_m)^\top$
and  $\mathbf{t}=(1,t,\dots,t^m)^\top$. 
Then, by a similar derivation of Proposition~\ref{prop:ortho}, we have 
\begin{equation*}
\|\bfc\|_2^2=\Expect\left[\pi_m^2(U)\right] = \bfx^\top \bfV \bfx.
\end{equation*}

Let $\mathbf{L}_m=(L_{nj})_{n,j=0}^m$ be 
the associated coefficient matrix of $\{L_k\}$ of order $m+1$.
By definition,
\begin{equation*}
    \bfx^\top \bfC_m^\top \bft=  \pi_m(t)=
    \sum_{k=0}^m c_kL_k(t)= \bfc^\top   \bfL_m \bft, \quad \forall t,
\end{equation*}
which implies that $\mathbf{x} = \bfC_m^{-1}\mathbf{L}_m^\top \mathbf{c}$. Hence 
\begin{equation}
    \lambda_{\min}(\bfV)=\min_{\mathbf{c}\neq 0} \frac{\|\mathbf{c}\|_2^2}{\mathbf{c}^\top \mathbf{L}_m \bfC_m^{-2}\mathbf{L}_m^\top \mathbf{c}} 
    \geq \frac{1}{\|\mathbf{L}_m\bfC_m^{-1}\|_F^2}.\label{eq:lambda-min-coeffs}
\end{equation}

Applying the explicit formula of $L_m$ in \eqref{eq:legendre}, we have
\begin{align}
\|\mathbf{L}_m\bfC_m^{-1}\|^2_F 
\leq &\sum_{n=0}^m\sum_{k=0}^{\left\lfloor\frac{n}{2}\right\rfloor} \frac{2n+1}{2^{2n}}
   \frac{[(2n-2k)!]^2}{(k!)^2(n-2k)![(n-k)!]^2} \left(\frac{2}{M^2}\right)^{n-2k}\nonumber\\
\leq &(2m+1) \sum_{n=0}^m\sum_{k=0}^{\left\lfloor\frac{n}{2}\right\rfloor} \frac{1}{2^{2n}}
2^{4(n-k)}\frac{n!}{(k!)^2}\left(\frac{2}{M^2}\right)^{n-2k}\nonumber\\
= &(2m+1) \sum_{n=0}^m n!{\left(\frac{8}{M^2}\right)^n}\sum_{k=0}^{\left\lfloor\frac{n}{2}\right\rfloor} \left(\frac{(M^2/8)^k}{k!}\right)^2\nonumber\\
\leq& 3m^2 m! {\left(\frac{8}{M^2}\right)^m} \left(\sum_{k=0}^{\infty} \frac{(m/8)^k}{k!}\right)^2\nonumber\\ 
\leq&  Cm^{2.5}{\left(\frac{8m}{eM^2}\right)^m}\exp\pth{\frac{m}{4}}\label{eq:norm-weighted-coeffs}
\end{align}
for some constant $C$ when $m\geq M^2$. 
Combining \eqref{eq:chi2-weighted}, \eqref{eq:sum-lambda-min}, \eqref{eq:lambda-min-coeffs}, and \eqref{eq:norm-weighted-coeffs}, we obtain that
\begin{align*}
\appr(m,P,\chi^2)&\geq
\frac{1}{2m(4e)^m}\sup_{P_m\in\calP_m}\sum_{k=1}^{2m} M^{2k} \frac{\left(\mathbb{E}[U^{k}]-\mathbb{E}[U_m^{k}]\right)^{2}}{k!}\\
&\geq \frac{1}{Cm^{7}} \pth{\frac{e}{256}}^m\exp\left[-2m\log\left(\frac{m}{M^2}\right)-\frac{m}{2}\right]\\
&\geq c \exp\left[-2m\log\left(\frac{m}{M^2}\right)-6m\right] 
\end{align*}
for some small constant 
$c$.

Finally, by Proposition~\ref{prop:chi2tv}, there exists  universal constants $C_1$ and $C^\prime$ such that
\begin{equation*}
    \appr(m,P,\TV)\geq
    \exp\pth{-C_1m\log\pth{\frac{m}{M^2}}}, \quad m\geq C^\prime M^2. \qedhere
\end{equation*}
\end{proof}

\subsection{Proof of Proposition~\ref{prop:lb-unif-arc}}
\label{app:lb-unif-arc}
Given a distribution $P$ with density $f$, let $\{\varPhi_n\}$ be the associated set of \emph{monic} orthogonal polynomials on the unit circle introduced in \eqref{eq:opuc}. Then the orthonormal version is given by $\varphi_n(z)=\kappa_n\varPhi_n(z)\triangleq \sum_{j=0}^n R_{nj}z^j$ with $R_{nn}=\kappa_n$.
Then, $\bfR_m\triangleq(R_{nj})_{n,j=0}^m$ is the coefficient matrix of $\{\varphi_{n}\}$ of order $m+1$. 
The Frobenius norm of $\bfR_m$ is upper bounded by the following lemma:
\begin{lemma}
    \label{lem:arc-coef}
    For any distribution $P$ on $\reals$, under the notations above,
    \begin{equation*}
        \|\bfR_m\|_F^2\leq\sum_{n=0}^{m} 2^{2n} \kappa_n^2.
    \end{equation*}
\end{lemma}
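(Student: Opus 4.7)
The plan is to bound each row of $\bfR_m$ separately. Writing the monic orthogonal polynomial as $\varPhi_n(z)=\sum_{j=0}^n a_{n,j}z^j$ with $a_{n,n}=1$, the relation $\varphi_n=\kappa_n\varPhi_n$ gives $R_{nj}=\kappa_n a_{n,j}$, so that
\begin{equation*}
\|\bfR_m\|_F^2 = \sum_{n=0}^m \kappa_n^2 \sum_{j=0}^n |a_{n,j}|^2.
\end{equation*}
It therefore suffices to establish the uniform row-wise estimate $\sum_{j=0}^n|a_{n,j}|^2\leq 2^{2n}$, independent of the underlying measure.

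The key ingredient I would invoke is the classical fact from OPUC theory that for any nontrivial probability measure on the unit circle, every monic orthogonal polynomial $\varPhi_n$ has all its zeros strictly inside the open unit disk (see, e.g., \cite[Theorem~1.7.1]{BarrySimon2005a}). In the present setting, the density $f$ on $\reals$ pushes forward under $\theta\mapsto e^{i\theta}$ to a probability measure on the unit circle, which is nontrivial whenever the orthogonality relation \eqref{eq:opuc} is well defined up to degree $m$. Factoring $\varPhi_n(z)=\prod_{k=1}^n(z-z_{n,k})$ with $|z_{n,k}|\leq 1$ and invoking Vieta's formulas, $a_{n,j}=(-1)^{n-j}e_{n-j}(z_{n,1},\ldots,z_{n,n})$ where $e_k$ denotes the $k$-th elementary symmetric polynomial, the triangle inequality then yields $|a_{n,j}|\leq \binom{n}{n-j}$.

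Combining with the Vandermonde identity,
\begin{equation*}
\sum_{j=0}^n|a_{n,j}|^2 \leq \sum_{j=0}^n\binom{n}{j}^2 = \binom{2n}{n}\leq 2^{2n},
\end{equation*}
and summing over $n$ gives the claim. The only subtle point is applying the OPUC zero-location theorem when the measure is supported on a proper arc of $\partial\mathbb{D}$ (as happens for $P\in\Pbdd{M}$ with small $M$), rather than on the full circle; however, Simon's result holds for any nontrivial probability measure on the unit circle, so no additional argument is needed. Everything else is elementary bookkeeping.
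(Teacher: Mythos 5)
Your proof is correct, and it takes a genuinely different route from the paper's. The paper bounds $|\varphi_n(z)|$ pointwise on the unit circle via the Szeg\"o recurrence and the Verblunsky coefficients $a_j=-\overline{\varPhi_{j+1}(0)}$: using $\kappa_n=\prod_{j<n}(1-|a_j|^2)^{-1/2}$ together with \cite[Eq.~(1.5.30)]{BarrySimon2005a}, it obtains $|\varphi_n(z)|\le 2^n\kappa_n$ on $\partial\mathbb{D}$ and then applies Parseval to get $\sum_j|R_{nj}|^2\le 2^{2n}\kappa_n^2$. You instead factor $\varPhi_n$ over its zeros, invoke the classical zero-location theorem for OPUC to put all zeros in the closed unit disk, bound the coefficients by elementary symmetric functions via Vieta, and finish with Vandermonde's identity. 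Both arguments rest on standard facts of comparable depth from Simon's book, but yours has the small bonus of yielding the sharper constant $\binom{2n}{n}$ in place of $2^{2n}$, and it is somewhat more self-contained in that it avoids the recursion-based pointwise estimate (1.5.30). One minor point worth making explicit: you only need $|z_{n,k}|\le 1$, so the closed-disk version of the zero-location result suffices, and the "strictly inside" refinement (and hence the nontriviality caveat for arcs) is not actually needed for the inequality. What is needed is that $\varPhi_0,\dots,\varPhi_m$ are well defined, i.e.\ that the pushforward of $P$ to the circle has at least $m+1$ points in its support; this is the same implicit assumption the paper makes.
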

\begin{proof}
     Let $a_{n}=-\overline{\varPhi_{n+1}(0)}, n\geq 0$. \cite[Eq.~(1.5.22)]{BarrySimon2005a} shows that $\kappa_n=\prod_{j=0}^{n-1}(1-\left|a_{j}\right|^2)^{-\frac{1}{2}}$.
     Then, by \cite[Eq.~(1.5.30)]{BarrySimon2005a}, it holds for all $|z|=1$ that
\begin{equation*}
    \left|\varphi_{n}(z)\right| 
    \leq \prod_{j=0}^{n-1}  \frac{2}{\sqrt{1-\left|a_{j}\right|^2}} \left|\varphi_{0}(z)\right| 
    = 2^n \kappa_n, 
    \quad n\in\naturals.
\end{equation*}
It follows that 
\begin{equation*}
    \sum_{j=0}^{n}\left|R_{nj}\right|^{2}=\frac{1}{2 \pi} \oint_{|z|=1}|\varphi_n(z)|^{2} \diff z\leq 2^{2n} \kappa_n^2.
\end{equation*}
Finally, the result follows from
$\|\bfR_m\|_F^2=\sum_{n=0}^{m}\sum_{j=0}^{n} \left|R_{nj}\right|^{2}$.
\end{proof}

It remains to construct a distribution supported on an arc of the unit circle and control the coefficients $\kappa_n$ in Lemma \ref{lem:arc-coef}.
To this end, we apply the following lemma \cite[Lemma 2.1]{Krasovsky2003} connecting the orthogonal system on an arc of the unit circle to the orthogonal system on an interval of the real line.  
Let $w$ be a positive,  \textit{even} weight function on $[-1, 1]$ normalized to $\int w(x) dx=1$, and $\{p_n\}$ be the set of monic orthogonal polynomials associated with $w$ with 
\[
    \int p_j(x)p_k(x) w(x) \diff x = h_j \indc{j=k}.    
\]
Let $f(\theta) =\frac{1}{2\gamma} w\left(\gamma^{-1} \cos \frac{\theta}{2}\right) \sin \frac{\theta}{2}$ if $\alpha \leq \theta \leq 2\pi-\alpha$, and $f(\theta) = 0$ otherwise, where $\gamma = \cos \frac{\alpha}{2}$ and $\alpha\in[0,\pi]$.
Since $f\geq 0$ and $\int f(x)\diff x=\int w(x)\diff x=1$, $f$ is a density.
Let $\{\varPhi_n\}$ be the set of monic orthogonal polynomials on the unit circle associated with $f$. 
The following statement holds:
\begin{lemma}
\label{lem:arc}    
Set $x = \gamma^{-1} 
\cos \frac{\theta}{2} = (2\gamma)^{-1}(z^{1/2} + z^{-1/2})$ with $z = e^{i\theta}$.
Then,
\begin{align*}
    \varPhi_{n}(z)&= \frac{(2 \gamma)^{n+1} z^{n / 2}}{z-1}\left(z^{1 / 2} p_{n+1}(x)-\frac{p_{n+1}\left(\gamma^{-1}\right)}{p_{n}\left(\gamma^{-1}\right)} p_{n}(x)\right) , \\
    \kappa_{n}^{-2}&=  2^{2n} \gamma^{2n+1} \frac{p_{n+1}\left(\gamma^{-1}\right)}{p_{n}\left(\gamma^{-1}\right)} h_n.
\end{align*}
\end{lemma}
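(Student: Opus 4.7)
The plan is to verify the two formulas in three steps: (i) check that the right-hand side for $\varPhi_n(z)$ defines a monic polynomial of degree $n$ in $z$; (ii) establish $f$-orthogonality against $1, z, \ldots, z^{n-1}$; and (iii) compute the $L^2(f)$-norm to recover $\kappa_n^{-2}$. The workhorse is the substitution $x = \gamma^{-1}\cos(\theta/2) = (2\gamma)^{-1}(z^{1/2} + z^{-1/2})$, which bijectively maps the arc $[\alpha, 2\pi - \alpha]$ onto $[-1,1]$; combined with $\sin(\theta/2) = \sqrt{1-\gamma^2 x^2}$, $z - 1 = 2iz^{1/2}\sqrt{1-\gamma^2 x^2}$, $|z-1|^2 = 4(1-\gamma^2 x^2)$, and $d\theta = -2\gamma(1-\gamma^2 x^2)^{-1/2}dx$, it yields the pullback $f(\theta)\,d\theta = w(x)\,dx$. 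The Chebyshev-type identities $z^{k/2} + z^{-k/2} = 2T_k(\gamma x)$ and $z^{k/2} - z^{-k/2} = 2i\sqrt{1-\gamma^2 x^2}\,U_{k-1}(\gamma x)$ then show that $z^{k/2}p_k(x)$ is a polynomial in $z$ of degree $k$ with leading coefficient $(2\gamma)^{-k}$.

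For step (i), setting $c = p_{n+1}(\gamma^{-1})/p_n(\gamma^{-1})$ makes $B(z) \triangleq z^{1/2}p_{n+1}(x) - c\,p_n(x)$ vanish at $z=1$ (where $x = \gamma^{-1}$), so $z^{n/2}B(z)$ is a polynomial of degree $n+1$ in $z$ divisible by $z-1$; the quotient has degree $n$, and the prefactor $(2\gamma)^{n+1}$ normalizes its leading coefficient to $1$. For step (ii), I would compute $\int \varPhi_n(z)\,z^{-k}\,f\,d\theta$ for $0 \le k \le n-1$: after substitution the integrand decomposes via the Chebyshev identities into a polynomial imaginary part of the form $(\text{polynomial of degree} < m)\cdot p_m(x)\cdot w(x)$ with $m \in \{n,n+1\}$, which vanishes by $\int p_j p_k w = h_j \delta_{jk}$; and a real part carrying a factor $1/\sqrt{1-\gamma^2 x^2}$ which, however, is odd in $x$ (since the evenness of $w$ forces $p_k(-x) = (-1)^k p_k(x)$) and therefore integrates to zero against the even weight $w$ over $[-1,1]$.

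For step (iii), using $|z-1|^2 = 4(1-\gamma^2 x^2)$ and $\overline{z^{\pm 1/2}} = z^{\mp 1/2}$ yields $|B(z)|^2 = p_{n+1}(x)^2 + c^2 p_n(x)^2 - 2c\gamma x\,p_n(x)p_{n+1}(x)$, and the key algebraic observation is the factorization
\[
|B(z)|^2 = Q(x)^2 + c^2(1-\gamma^2 x^2)p_n(x)^2, \qquad Q(x) \triangleq p_{n+1}(x) - c\gamma x\,p_n(x).
\]
The polynomial $Q$ vanishes at $x = \gamma^{-1}$ by the choice of $c$ and at $x = -\gamma^{-1}$ by the parity of $p_k$, so $Q(x) = (1-\gamma^2 x^2)R(x)$ with $\deg R = n-1$. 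Hence $Q^2/(1-\gamma^2 x^2) = Q R$, and $\int Q R\,w\,dx = -c\gamma \int x R\,p_n\,w\,dx$ by real-line orthogonality ($\int p_{n+1}R\,w = 0$); the right-hand side is determined by the leading coefficient of $R$, namely $(c\gamma-1)/\gamma^2$, paired against $\int p_n^2 w = h_n$. Combining with the $c^2 h_n$ piece from the split and multiplying by $(2\gamma)^{2n+2}/4$ produces $\kappa_n^{-2} = 2^{2n}\gamma^{2n+1}\,c\,h_n$.

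The principal obstacle is recognizing the factorization $|B|^2 = Q^2 + c^2(1-\gamma^2 x^2)p_n^2$ and exploiting the parity-induced zero of $Q$ at $x = -\gamma^{-1}$---combined with the definition-based zero at $\gamma^{-1}$---to reduce the rational integrand $|B|^2/(1-\gamma^2 x^2)$ to the polynomial integrand $QR + c^2 p_n^2$. Once this is in hand, real-line orthogonality and leading-coefficient tracking finish the computation, but the evenness of $w$ plays an essential role both here (via $Q(-\gamma^{-1}) = 0$) and in step (ii) (via the odd-function argument), and without it neither cancellation would occur.
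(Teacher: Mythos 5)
The paper does not prove this lemma at all: it is quoted verbatim from Krasovsky \cite[Lemma~2.1]{Krasovsky2003}, which in turn rests on Szeg\"o's classical correspondence between orthogonal polynomials on an arc of the unit circle and on a real interval. Your proposal therefore supplies a proof where the paper gives only a citation, so there is no ``paper's own proof'' to compare against line by line. Judged on its own terms, your argument is correct and complete: step~(i) correctly identifies the leading coefficient of $z^{n/2}B(z)$ as $(2\gamma)^{-(n+1)}$, uses the choice of $c$ to force the zero at $z=1$, and concludes monicity of degree $n$; step~(ii) correctly splits the pulled-back integrand into a polynomial piece killed by $\int p_j p_k w = h_j\delta_{jk}$ and a piece with a $(1-\gamma^2 x^2)^{-1/2}$ factor killed by parity of $w$ and $p_k$; and step~(iii) correctly exploits the algebraic identity $|B|^2 = Q^2 + c^2(1-\gamma^2 x^2)p_n^2$, the vanishing of $Q$ at $\pm\gamma^{-1}$ (by definition of $c$ and by parity, respectively), and leading-coefficient bookkeeping to arrive at $\kappa_n^{-2} = 2^{2n}\gamma^{2n+1}\,c\,h_n$, which matches the claim. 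Since the monic OPUC of a given degree is unique, the orthogonality in step~(ii) together with monicity in step~(i) does indeed pin down $\varPhi_n$.

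One small bookkeeping remark: in step~(ii) the real and imaginary parts appear to be swapped. Working out $\frac{z^{m/2}}{2i\sin(\theta/2)} = \frac{1}{2}U_{m-1}(\gamma x) - \frac{i}{2}\frac{T_m(\gamma x)}{\sqrt{1-\gamma^2 x^2}}$ shows that the \emph{real} part is the $U$-polynomial piece (handled by orthogonality) and the \emph{imaginary} part carries the $(1-\gamma^2 x^2)^{-1/2}$ factor and the odd parity, the reverse of your labeling. This is purely a matter of where the overall $1/(2i)$ is absorbed and does not affect the validity of the argument; both pieces vanish for the reasons you give.
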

Now, with Lemmas~\ref{lem:arc-coef} and~\ref{lem:arc}, we are ready to show the  lower bound for $\Pbdd{M}$:
\begin{proof}[Proof of Proposition~\ref{prop:lb-unif-arc}]
Let $b\in (0,\pi) $ and $\gamma=\sin\frac{b}{2}$. 
Recall that $\tilde{P}$ is defined as the distribution supported on $[-b,b]$ with a density function  
\begin{equation*}
    \tilde{f}(\theta)=
    \begin{cases}
    \frac{\sqrt{\gamma^2-\sin^2(\theta/2)}\cos(\theta/2)}{\pi \gamma^2},& \theta\in[-b,b],\\
    0,& \text{otherwise.}
\end{cases}
\end{equation*}
Let $\delta=b/M$ and $P\in\Pbdd{M}$ be the distribution of $X/\delta$ for $X\sim \tilde P$. 
To apply Proposition~\ref{prop:ortho}, it remains to evaluate the coefficients of the orthonormal polynomials associated with $\tilde P$, which can be related to  Chebyshev polynomials using Lemma~\ref{lem:arc}.
To this end, let $w(x)=\frac{2}{\pi}\sqrt{1-x^2}$ for $x\in[-1,1]$, and define the translated distribution $Q(\theta)\triangleq \tilde{P}(\theta-\pi)$ for $\theta\in [\pi-b,\pi+b]$, with density $\tilde{f}(\theta-\pi) = \frac{1}{2\gamma}w\left(\gamma^{-1} \cos \frac{\theta}{2}\right) \sin \frac{\theta}{2}$.
Let $U_n$ denote the degree-$n$ Chebyshev polynomial of the second kind \cite[Eq. (1.12.3)]{SzegoOrthopolys} and $p_n(x)=\frac{1}{2^n}U_{n}(x)$. 
Then, $p_n$ is monic with 
\begin{equation*}
    \int_{-1}^1 p_n(x) p_m(x) w(x) \diff x= h_n\delta_{mn},
    \quad h_n=\frac{1}{2^{2n}}.
\end{equation*}
Let $\{\Psi_n\}$ and $\{\tilde{\varPhi}_n\}$ be the sets of monic orthogonal polynomials on the unit circle associated with $Q$ and $\tilde{P}$ as defined in \eqref{eq:opuc}, and 
$\psi_n=\kappa_n\Psi_n$,
$\tilde{\varphi}_n= \tilde{\kappa}_n\tilde{\varPhi}_n$ be the orthonormal polynomials, respectively. By definition, $\tilde{\varPhi}_n(z)=(-1)^n\Psi_n(-z)$ and $\tilde{\kappa}_n=\kappa_n$. 
Let $\mathbf{\tilde{R}}_m$ be the coefficient matrix of $\{\tilde{\varPhi}_n\}$ of order $m+1$. By Lemmas~\ref{lem:arc-coef} and~\ref{lem:arc}, we obtain that
\begin{equation*}
\|\mathbf{\tilde{R}}_m\|_F^2\leq \sum_{n=0}^{m} 2^{2n} \tilde{\kappa}_{n}^2 = \sum_{n=0}^{m} 2^{2n} \kappa_n^2=\sum_{n=0}^{m}\frac{1}{\gamma^{2n+1}h_n}\frac{p_{n}\left(\gamma^{-1}\right)}{p_{n+1}\left(\gamma^{-1}\right)}.
\end{equation*}
Consider $r_n(x)\triangleq p_{n+1}(x)/p_n(x)$. By definition, $r_0(x)=x$. It follows from the recurrence relation $U_{n+1}(x)=2 x U_{n}(x)-U_{n-1}(x)$ \cite[Sec.4.5.]{SzegoOrthopolys} that 
\[
r_n(x)=x-\frac{1}{4r_{n-1}(x)}, \quad n\in\naturals.
\]
Note that $r_0(x)\ge x/2$ for $x\ge 0$, and if $r_{n-1}(x)\ge x/2$, then $r_{n}(x)\geq x-\frac{1}{2x}\geq \frac{x}{2}$ for $x\ge 1$. By induction, $r_n(x)\geq \frac{x}{2}$ for all $x\ge 1$ and $n\in\naturals$.
Since $\gamma^{-1}> 1$, we have
\begin{equation}
\label{eq:R_ub_unif}
    \|\tilde{\bfR}_m\|_F^2
    \leq  \sum_{n=0}^{m} 2^{2n+1}\gamma^{-2n}
    = 2\frac{(2/\gamma)^{2m+2}-1}{(2/\gamma)^{2}-1}.
\end{equation}
Assuming $m\geq \frac{eM^2}{2}$, choose $b={ \sqrt{\frac{M^2}{m}\log\frac{2m}{e M^2}}}$, which satisfies $b\leq \frac{\sqrt{2}}{e}$.
Then $\gamma=\sin\frac{b}{2}\in\qth{\frac{b}{4},\frac{b}{2}}$. 
Applying Propositions~\ref{prop:TV-eigen} and \ref{prop:ortho}, we obtain that
\begin{align*}
     \appr(m,P,\TV)&\geq 
     \frac{1}{2(m+1)\exp(m^2\delta^2/2) \|\tilde{\bfR}_m\|_F^2 }
     \geq
     \frac{(2/\gamma)^{2}-1}{8m[(2/\gamma)^{2}-(2/\gamma)^{-2m}]} \exp\pth{-\frac{m^2b^2}{2M^2}-2m\log\frac{2}{\gamma}}\\
     &\geq \frac{1-(b/4)^2}{8m} \exp\pth{-\frac{m^2b^2}{2M^2}-2m\log\frac{8}{b}}.
\end{align*}
We conclude the proof of~\eqref{eq:lb-unif-1} by plugging in the choice of $b$.
\end{proof}

\section{Proof of the approximation rates}
\label{app:rate}
\subsection{Proofs of main theorems}
\label{app:rate-main}
For convenience, we restate the dual formula of $\comp$ and $\appr$ in the following: 
\begin{equation}
    \label{eq:2-2}
\comp(\epsilon,\calP,d)=\inf\sth{m:\appr(m,\calP,d)\leq \epsilon},
\end{equation}
\begin{equation}
    \label{eq:2-3}
\appr(m,\calP,d)=\inf\sth{\epsilon:\comp(\epsilon,\calP,d)\leq m}. 
\end{equation}

\begin{proof}[Proof of Theorem~\ref{thm:main-bdd}] 
{For $d$ satisfying Assumption~\ref{as:f-div}}, denote $m=\comp(\epsilon,\Pbdd{M},d)$. Combining the result in Theorems~\ref{thm:ub-bdd} and~\ref{thm:lb-unif}, we have that $\log\frac{1}{\epsilon}\asymp m\log\frac{m}{M^2}$ when $m\geq C M^2$ for some constant $C$, and 
$\log\frac{1}{M^2\epsilon}\lesssim\frac{m^2}{M^2}\lesssim \log\frac{1}{\epsilon}$ when $3\sqrt{\kappa}M \leq m\leq   CM^2$. Also, the condition $M\leq \epsilon^{-c_1},c_1\in(0,\frac{1}{2})$ implies that $\log\frac{1}{M^2\epsilon}\asymp \log\frac{1}{\epsilon}$.
Following \eqref{eq:2-2}, we can rewrite the result as
\begin{equation*}
    m \asymp
      \begin{cases}
      \frac{\log\epsilon^{-1}}{\log\log\epsilon^{-1}-\log{M^2}}, & M\leq c\sqrt{\log\epsilon^{-1}};\\
      M\sqrt{\log\epsilon^{-1}}, &  c\sqrt{\log\epsilon^{-1}}<M\leq \epsilon^{-c_1},\\
  \end{cases}
\end{equation*}
 for some small constant $c$, which is equivalent to the desired result \eqref{eq:main01}.
 \end{proof}
\begin{proof}[Proof of Theorem~\ref{thm:main-tail}]
    {For $d$ satisfying Assumption~\ref{as:f-div}},
    denote $m=\comp(\epsilon,\calP_\alpha(\beta),d)$.
    First we prove the lower bound.
    By Theorem~\ref{thm:lb-tail-1} and Lemma~\ref{lem:f-divs},
    \begin{equation*}
        \frac{1}{\epsilon\beta^{\frac{4}{2+\alpha}}m^{\frac{2\alpha}{2+\alpha}}}
        \log \frac{1}{\epsilon\beta^{\frac{4}{2+\alpha}}m^{\frac{2\alpha}{2+\alpha}}} = 
        O_\alpha\pth{\frac{1}{\beta^2\epsilon}},
    \end{equation*}
    which implies that $m=\Omega_\alpha\qth{\beta \pth{\log \frac{1}{\beta^2\epsilon}}^{\frac{2+\alpha}{2\alpha}}}$.
    By the relation $ \beta \leq \epsilon^{-c_1}$ for $c_1\in(0,\frac{1}{2})$, we have 
    $m=\Omega_\alpha\qth{\beta \pth{\log \frac{1}{\epsilon}}^{\frac{2+\alpha}{2\alpha}}}$.

    Next, we show the upper bound by applying Theorem~\ref{thm:ub--subW}, where the condition $m\geq \Omega_\alpha(\beta)$ is satisfied by our lower bound. We have that
    \begin{equation*}
\log\frac{1}{\epsilon}\gtrsim_\alpha
  \begin{cases}
      m\log\frac{m^{\alpha-2}}{\beta^{2\alpha}}, & m^{\alpha-2}\gtrsim_\alpha \beta^{2\alpha};\\
      \pth{\frac{m}{\beta}}^{\frac{2\alpha}{2+\alpha}}, & m^{\alpha-2}\lesssim_\alpha \beta^{2\alpha},\\
  \end{cases}
\end{equation*}
    which by \eqref{eq:2-2} implies that
\begin{equation*}
m\lesssim_\alpha \frac{\log\frac{1}{\epsilon}}{\log\pth{1+\frac{1}{\beta}\pth{\log\frac{1}{\epsilon}}^{\frac{\alpha-2}{2\alpha}}}}.
\end{equation*}
Particularly, if $m^{\alpha-2}\beta^{-2\alpha}=O_\alpha(1)$ holds, we have 
    $m =\Theta_\alpha\pth{ \beta \pth{\log\frac{1}{\epsilon}}^{\frac{2+\alpha}{2\alpha}}}$.
\end{proof}
\subsection{Proofs for Section~\ref{sec:extension-moment}}
\label{app:main-moment}
The following two lemmas give upper and lower bounds for the approximation error, respectively:
\begin{proposition}
    \label{prop:ub-moment}
    Suppose that $m\geq (16e^3)^{\frac{9}{8\alpha}}\beta$ and $\beta\geq \frac{8\alpha}{e\log{16e^3}}$ hold. 
    Then, it holds for $d\in \{\TV,H^2,\KL,\chi^2\}$ that
    \begin{equation}
        \label{eq:ub-moment}
        \appr(m,\calM_\alpha(\beta),d) \leq O_\alpha\qth{\pth{\frac{\beta}{m}\sqrt{\log\frac{m}{\beta}}}^{\alpha}}.
    \end{equation}
\end{proposition}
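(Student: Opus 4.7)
The strategy mirrors the proof of Theorem~\ref{thm:ub--subW}: apply Proposition~\ref{prop:ub-tail} to truncate the mixing distribution to $[-t,t]$, invoke Theorem~\ref{thm:ub-bdd} on the truncated family $\Pbdd{t}$, and balance the truncation error against the approximation error by choosing $t$ optimally. The essential difference from the sub-Weibull case is that the tail bound for $\calM_\alpha(\beta)$ is only polynomial; specifically, Markov's inequality (equivalently, \eqref{eq:orlicz_tail} with $\psi(x)=x^\alpha$) yields $P(|X|\geq t) \leq 2(\beta/t)^\alpha$ for every $P\in\calM_\alpha(\beta)$.

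Proposition~\ref{prop:ub-tail} then gives
\begin{equation*}
\appr(m,\calM_\alpha(\beta),\chi^2) \lesssim \appr(m,\Pbdd{t},\chi^2) + \pth{\frac{\beta}{t}}^\alpha,
\end{equation*}
provided $t$ is large enough that $\inf_P P(I_t)\ge 1/2$. Under the second regime of Theorem~\ref{thm:ub-bdd} (i.e., $3\sqrt{\kappa}t \leq m \leq \kappa t^2$ with $\kappa = 16e^3$), the first term is at most $\exp\bigl(-\frac{\log\kappa}{4\kappa}\cdot\frac{m^2}{t^2}\bigr)$. Choosing $t = c_\alpha m/\sqrt{\log(m/\beta)}$ with $c_\alpha^2 = \log\kappa/(8\kappa\alpha)$, both terms become $\lesssim_\alpha ((\beta/m)\sqrt{\log(m/\beta)})^\alpha$. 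This yields the desired bound for $d = \chi^2$, and hence for $d \in \{\KL, H^2\}$ via the inequalities $H^2 \leq \KL \leq \chi^2$ implied by Lemma~\ref{lem:f-divs}.

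The $\TV$ case requires a separate argument, since the bound $\TV \leq \sqrt{\chi^2/2}$ alone would halve the exponent $\alpha$ to $\alpha/2$. Instead, I would invoke the triangle inequality
\begin{equation*}
\TV(f_P, f_{P_m}) \leq \TV(f_P, f_{P_t}) + \TV(f_{P_t}, f_{P_m}),
\end{equation*}
bound the first term by $\TV(P, P_t) = P(I_t^c) \leq 2(\beta/t)^\alpha$ using the data-processing inequality for convolution, and bound the second term by $\sqrt{\appr(m,\Pbdd{t},\chi^2)/2}$ via $\TV \leq \sqrt{\chi^2/2}$ together with Theorem~\ref{thm:ub-bdd}. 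The extra factor $1/2$ in the exponent arising from the square root is exactly absorbed by the same choice $c_\alpha^2 = \log\kappa/(8\kappa\alpha)$, which was tailored so that $\frac{\log\kappa}{8\kappa c_\alpha^2} = \alpha$; consequently the second term is also $\lesssim_\alpha ((\beta/m)\sqrt{\log(m/\beta)})^\alpha$.

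The main technical point is verifying the regime conditions of Theorem~\ref{thm:ub-bdd}. The lower bound $3\sqrt{\kappa}t\leq m$ rearranges to $\log(m/\beta) \geq 9c_\alpha^2\kappa = 9\log\kappa/(8\alpha)$, which is exactly the hypothesis $m \geq (16e^3)^{9/(8\alpha)}\beta$. The upper bound $m \leq \kappa t^2$ reduces to $\beta x \log\kappa \geq 8\alpha \log x$ with $x = m/\beta$, which follows from the elementary inequality $e\log x \leq x$ combined with the hypothesis $\beta \geq 8\alpha/(e\log(16e^3))$. The delicate part of the argument is the simultaneous calibration of $c_\alpha$ and the threshold $t$ so that the regime conditions, the $\chi^2$-to-$\TV$ conversion, and the truncation-error balance all fit together.
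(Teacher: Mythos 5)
Your proposal is correct and follows essentially the same approach as the paper: truncate to $\Pbdd{t}$ via Proposition~\ref{prop:ub-tail}, invoke the middle regime of Theorem~\ref{thm:ub-bdd}, choose $t\asymp_\alpha m/\sqrt{\log(m/\beta)}$ (your $c_\alpha$ matches the paper's constant exactly), and verify the regime conditions against the two hypotheses on $m$ and $\beta$. The only cosmetic difference is in the $\TV$ step: you use the triangle inequality $\TV(f_P,f_{P_m})\le \TV(f_P,f_{P_t})+\TV(f_{P_t},f_{P_m})$ together with data processing to bound the first term by $P(I_t^c)$, while the paper writes $f_P$ as the mixture $P(I_t)f_{P_t}+P(I_t^c)f_{P_{t^c}}$ and applies convexity (Jensen's inequality); both routes yield the identical bound $\appr(m,\calP,\TV)\le\inf_{t>0}\bigl(\sqrt{\appr(m,\Pbdd{t},\chi^2)/2}+\sup_{P}P(I_t^c)\bigr)$.
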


\begin{proof}
Let $\kappa= 16e^3$ and $t=m \sqrt{\frac{\log\kappa }{4\kappa} \frac{1}{2\alpha\log(m/\beta)}}$, that satisfies
\begin{align*}
    3\sqrt{\kappa }t&~= m \sqrt{\frac{9\log\kappa }{8\alpha} \frac{1}{\log(m/\beta)}}\leq m; \\
    \kappa t^2&~= \kappa m \beta \frac{\log\kappa }{8\alpha\kappa} \frac{m/\beta}{\log(m/\beta)}\geq m\beta \frac{e\log \kappa}{8\alpha}\geq m.
\end{align*}
By Theorem~\ref{thm:ub-bdd}, we have  
\begin{equation*}
    \appr(m,\Pbdd{t},\chi^2)
    \le \exp\left(-\frac{\log\kappa }{4\kappa}\frac{m^2}{t^2}\right)=\pth{\frac{\beta}{m}}^{2\alpha}.
\end{equation*}
On the other hand, It follows from the tail bound \eqref{eq:orlicz_tail} that 
\begin{equation*}
    \sup_{P\in\calM_\alpha(\beta)}
    P(I_t^c)\leq 2\pth{\frac{\beta}{t}}^\alpha=2 \pth{\frac{\beta}{m}\sqrt{\frac{4\kappa}{\log\kappa} 2\alpha\log\frac{m}{\beta}}}^\alpha\asymp_\alpha\pth{{\frac{\beta}{m}\sqrt{\log\frac{m}{\beta}}}}^\alpha.
\end{equation*}
By Proposition~\ref{prop:ub-tail}, \eqref{eq:ub-moment} holds for $d=\chi^2$. For the total variation, we have
\begin{align*}
        \TV\left(f_{Q} , f_{P}\right)&\stepa{\leq} P(I_t)\TV\left(f_{Q} , f_{P_t}\right) +P(I_t^c) \TV\left(f_{Q} , f_{P_t^c}\right)\\
        &\stepb{\leq} \sqrt{\frac{1}{2}\chi^{2}\left(f_{Q} \| f_{P_t} \right)}  +P(I_t^c),
\end{align*}
where (a) applies Jensen's inequality, and (b) follows from Lemma~\ref{lem:f-divs}. Hence,
    $$\appr(m,\calP,\TV)\leq \inf_{t>0} \pth{\sqrt{\frac{1}{2}\appr(m,\Pbdd{t},\chi^2)}+\sup_{P\in\calP}P(I_t^c)},$$
and thus \eqref{eq:ub-moment} holds for $d=\TV$. Finally, \eqref{eq:ub-moment} holds for $d\in\{\TV,H^2,\KL,\chi^2\}$ by Lemma~\ref{lem:f-divs}.
\end{proof}

\begin{proposition}
    \label{thm:lb-tail-2}
    There exist $C_\alpha$ and $D_\alpha$ that only depend on $\alpha$ such that the following holds.
    If $m\geq D_\alpha\beta$,
    then, there exists $P\in \calM_\alpha(\beta)$ such that
    \begin{equation*}
        \appr(m,P,\TV) \geq \frac{C_\alpha}{m\log\frac{m}{\beta}}\pth{\frac{\beta}{m}}^{\alpha}.
    \end{equation*}
\end{proposition}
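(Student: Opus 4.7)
The plan is to follow the template of Theorem~\ref{thm:lb-tail-1}: combine Proposition~\ref{prop:TV-eigen} with the wrapped-density bound \eqref{eq:eigen-lb-0}, then optimize the frequency parameter $\delta$. The key change is to use a test distribution with polynomial (rather than exponential) tails, since $\calM_\alpha(\beta)$ admits much heavier tails than $\calP_\alpha(\beta)$, and the sub-Weibull density $f_{\alpha,\beta}$ would fail to saturate the $\calM_\alpha(\beta)$ constraint.

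Specifically, I would take $P$ to have the symmetric truncated power-law density
\[
f(x)=\frac{A}{B}\pth{1+\frac{|x|}{B}}^{-(\alpha+1)}\indc{|x|\le M},
\]
with $M\asymp_\alpha m$ and the shape parameter $B$ chosen so that $\int f=1$ and $\Expect|X|^\alpha\le \beta^\alpha$. The normalization constant is $A=\Theta_\alpha(1)$, and a direct substitution $u=|x|/B$ gives
\[
\int|x|^\alpha f(x)\diff x \;=\; 2AB^\alpha\int_0^{M/B}\frac{u^\alpha}{(1+u)^{\alpha+1}}\diff u\;\asymp_\alpha\; B^\alpha\log(M/B).
\]
Setting this equal to $\beta^\alpha$ forces $B\asymp_\alpha \beta/\log^{1/\alpha}(M/B)\asymp_\alpha \beta/\log^{1/\alpha}(m/\beta)$, where positivity of the logarithm is guaranteed by the hypothesis $m\ge D_\alpha\beta$ for $D_\alpha$ sufficiently large.

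Next, I would apply Proposition~\ref{prop:TV-eigen} and \eqref{eq:eigen-lb-0} with $Y=\delta X$ and $\delta=\pi/m$. The wrapped density satisfies $g^{\wrap}(\theta)\ge g(\pi)=\delta^{-1}f(\pi/\delta)$ provided $\pi/\delta\le M$, which is arranged by taking $M\ge m$. Since $m\ge D_\alpha\beta$ yields $B\delta=B\pi/m\ll 1$, the argument $\pi/(B\delta)$ is large, so $(1+u)^{-(\alpha+1)}\sim u^{-(\alpha+1)}$ gives
\[
g(\pi)\;\asymp_\alpha\;\frac{1}{B\delta}\pth{\frac{B\delta}{\pi}}^{\alpha+1}\;\asymp_\alpha\;(B\delta)^\alpha\;\asymp_\alpha\;\frac{B^\alpha}{m^\alpha}.
\]
Because $m^2\delta^2/2=\pi^2/2$ is an absolute constant, Proposition~\ref{prop:TV-eigen} then yields
\[
\appr(m,P,\TV)\;\gtrsim_\alpha\;\frac{B^\alpha}{m^{\alpha+1}}\;\asymp_\alpha\;\frac{\beta^\alpha}{m^{\alpha+1}\log(m/\beta)}\;=\;\frac{C_\alpha}{m\log(m/\beta)}\pth{\frac{\beta}{m}}^\alpha,
\]
as desired.

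The main obstacle is calibrating the shape parameter $B$: the tail exponent $\alpha+1$ is the borderline case in which the $\alpha$-th moment just barely fails to converge, and it is precisely this logarithmic divergence that generates the $\log(m/\beta)$ factor distinguishing this bound from the exponential rate of Theorem~\ref{thm:lb-tail-1}. A secondary subtlety is that the choice $\delta=\pi/m$ is not the unconstrained maximizer of the Rayleigh-type expression one would obtain without support restrictions; rather, it is dictated by the interplay between the truncation radius $M$ and the wrapping period $2\pi$, since the support $M\gtrsim m$ is needed both to ensure $f(\pi/\delta)>0$ and to make $\log(M/B)\asymp\log(m/\beta)$. Once these scaling choices are lined up, the remaining verification is parallel to that of Theorem~\ref{thm:lb-tail-1}.
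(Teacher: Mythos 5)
Your proposal is correct and takes essentially the same approach as the paper's proof: both construct a test distribution in $\calM_\alpha(\beta)$ with polynomial tail exponent $\alpha+1$ (the critical exponent at which the $\alpha$-th moment diverges logarithmically), apply Proposition~\ref{prop:TV-eigen} together with the wrapped-density bound \eqref{eq:eigen-lb-0}, and set $\delta\asymp 1/m$ so that the exponential factor is constant. The only cosmetic difference is the test distribution: the paper uses a one-sided truncated Pareto $h_\alpha(x)=\frac{a}{x^{\alpha+1}}\indc{b\le x\le kb}$ supported away from the origin, whereas you use a symmetric density $\frac{A}{B}(1+|x|/B)^{-(\alpha+1)}\indc{|x|\le M}$; both are normalized and calibrated via the same logarithmic moment identity, and both produce the identical rate $\frac{\beta^\alpha}{m^{\alpha+1}\log(m/\beta)}$.
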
  
\begin{proof}
Set $D_\alpha=\min\sth{x\geq e:\frac{1}{x}\pth{\frac{1}{\alpha\log x}}^{\frac{1}{\alpha}}\leq \frac{1}{3}\pth{\frac{1-\log^{-1} x}{1+\alpha}}^{\frac{1}{\alpha}}}$, which is a finite value since $\frac{1}{x}\pth{\frac{1}{\alpha\log x}}^{\frac{1}{\alpha}}\to 0$ and $\pth{\frac{1-\log^{-1} x}{1+\alpha}}^{\frac{1}{\alpha}}\to \pth{1+\alpha}^{-\frac{1}{\alpha}}>0$ as $x\to \infty$.
Suppose that $r= \frac{m}{\beta}\geq D_\alpha$.
Consider the truncated Pareto random variable $X \sim P$ with the density function 
\begin{equation}
    \label{eq:h_y}
    h_\alpha(x)= \frac{a}{x^{\alpha+1}}\indc{b\leq x \leq kb},
\end{equation}
where the parameters are set as
$k = r(\log r)^{\frac{1}{\alpha}}$, $a = \frac{\beta^\alpha}{\log k}$, and $b=[\alpha^{-1}a(1-k^{-\alpha})]^{\frac{1}{\alpha}}$.
Direct calculation shows that $h_\alpha$ is a well-defined density with $\|X\|_\alpha=\beta$, \ie, $P\in \calM_\alpha(\beta)$. 
Note that 
\begin{equation*}
a\in \qth{\frac{\alpha}{1+\alpha}\frac{\beta^\alpha}{\log r},\frac{\beta^\alpha}{\log r}}; \quad b\in\qth{\pth{\frac{1-\log^{-1} D_\alpha}{1+\alpha}}^{\frac{1}{\alpha}}\frac{\beta}{(\log r)^{1/\alpha}},\pth{\frac{1}{\alpha}}^{\frac{1}{\alpha}}\frac{\beta}{(\log r)^{1/\alpha}}}.
\end{equation*}
Choose $\delta=\frac{\pi D_\alpha (\alpha\log D_\alpha)^{1/\alpha}}{m}$.
Then, we have 
\begin{equation*}
    b \leq \pth{\frac{1}{\alpha}}^{\frac{1}{\alpha}}\frac{\beta}{(\log r)^{1/\alpha}}\leq \frac{m}{D_\alpha}\pth{\frac{1}{\alpha\log D_\alpha}}^{1/\alpha}=\frac{\pi}{\delta};
\end{equation*}
\begin{equation*}
kb\geq \pth{\frac{1-\log^{-1} D_\alpha}{1+\alpha}}^{\frac{1}{\alpha}}\frac{\beta r(\log r)^{\frac{1}{\alpha}}}{(\log r)^{1/\alpha}}= 
    m \pth{\frac{1-\log^{-1} D_\alpha}{1+\alpha}}^{\frac{1}{\alpha}} \geq  \frac{3m}{D_\alpha}\pth{\frac{1}{\alpha\log D_\alpha}}^{1/\alpha}=\frac{3\pi}{\delta}.
\end{equation*}
With $Y\triangleq \delta X \sim g$, we have 
\begin{align*}
    \appr(m,P,\TV)
    \stepa{\geq} \sup_{\delta>0} \frac{\pi\min_{0\leq\theta\leq 2\pi}g^\wrap(\theta)}{(m+1)\exp(m^2\delta^2/2)}
    \stepb{\geq} \frac{\pi h_{\alpha}\pth{\frac{3\pi}{\delta}}}{2m\delta\exp(m^2\delta^2/2)},
\end{align*}
where (a) follows from Proposition~\ref{prop:TV-eigen} and  \eqref{eq:eigen-lb-0}, and (b) holds by the inequality $$g^\wrap(\theta)\geq \inf_{\theta\in[\delta b,\delta b+2\pi)}g(\theta)=\frac{1}{\delta} h_\alpha\pth{b+\frac{2\pi}{\delta}}\geq \frac{1}{\delta}h_{\alpha}\pth{\frac{3\pi}{\delta}},\quad \theta\in [0,2\pi].$$
Since $\frac{3\pi}{\delta}\in [b,kb]$, we have $h_{\alpha}\pth{\frac{3\pi}{\delta}}\gtrsim_\alpha \frac{1}{\log r}\frac{\beta^\alpha}{m^{\alpha+1}}$, 
and the desired result follows. 
\end{proof}
\label{app:rate-extension}
\subsection{Proofs for Section~\ref{sec:extension-other}}
\label{app:highD-ub-lb}
To begin with, we introduce some basic concepts of moment tensors. 
For a random variable $U\in \reals^d$, define its order-$\ell$ moment tensor in $(\reals^d)^{\otimes\ell}$ as
\begin{equation*}
    M_{\ell}(U) \triangleq \mathbb{E}[\underbrace{U \otimes \cdots \otimes U}_{\ell \text { times }}].
\end{equation*}
For example, $M_1(U)=\Expect[U]$, and $M_2(U-\Expect[U])$ is the covariance matrix of $U$.
Then, for a multi-index $\bfj=(j_1,\ldots,j_\ell)\in [d]^\ell$, the $\bfj\Th$ entry of $M_{\ell}(U)$ is $m_{\bfj}(U)\triangleq \Expect[U_{j_1}\cdots U_{j_\ell}]$.
Also, with $U\sim P$, we write $M_{\ell}(P) =M_{\ell}(U)$. 
The Frobenius norm of a tensor $T\in (\reals^d)^{\otimes\ell}$ is defined as $\|T\|_F\triangleq \sqrt{\inner{T,T}}$ , where the tensor inner
product is $\inner{S,T}=\sum_{\bfj\in [d]^\ell} S_{j_{1}, \ldots, j_{\ell}} T_{j_{1}, \ldots, j_{\ell}}$.
The following result is an extension of Lemma~\ref{lem:mm1} to multivariate distributions.

\begin{lemma}
    \label{lem:mm1-highD}
    Suppose that $P,Q$ are supported on the $d$-dimensional $\ell_2$-ball of radius $R$. If $L>4(dR)^2$ and $M_\ell(P)=M_\ell(Q)$ for $\ell\in[L-1]$, then
\begin{equation*}
  \chi^{2}\left(f_P \| f_Q\right) \leq 4\exp\pth{\frac{R^{2}}{2}}\pth{\frac{4ed^2R^2}{L}}^L.
\end{equation*}
\end{lemma}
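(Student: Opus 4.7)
The plan is to mirror the one-dimensional argument in the proof of Lemma~\ref{lem:mm1}, replacing scalar moments by the moment tensors and the scalar Hermite expansion by the multivariate Hermite expansion. First, I would reduce to a centered setting: let $\mu=\Expect_Q[X]\in B_d(R)$, and let $P',Q'$ be the pushforwards of $P,Q$ by $x\mapsto x-\mu$. Since moment equality is preserved by a common translation (the binomial expansion only involves lower-order tensors), we still have $M_\ell(P')=M_\ell(Q')$ for $\ell\in[L-1]$, while now $\Expect_{Q'}[V]=0$, $P',Q'$ are supported in $B_d(2R)$, and $\chi^2(f_P\|f_Q)=\chi^2(f_{P'}\|f_{Q'})$ by translation invariance of Gaussian convolution.

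Second, I would lower-bound the denominator density. Writing $f_{Q'}(x)=\phi_d(x)\Expect_{V\sim Q'}[\exp(x^\top V-\tfrac12\|V\|^2)]$ and applying Jensen with $\Expect_{Q'}[V]=0$ gives
\[
f_{Q'}(x)\ \ge\ \phi_d(x)\exp\!\Bigl(-\tfrac12\Expect_{Q'}\|V\|^2\Bigr)\ \ge\ \phi_d(x)\,e^{-R^2/2},
\]
since $\Expect_{Q'}\|V\|^2=\Expect_Q\|X\|^2-\|\mu\|^2\le R^2$. Therefore
\[
\chi^2(f_{P'}\|f_{Q'})\ \le\ e^{R^2/2}\!\int\frac{(f_{P'}(x)-f_{Q'}(x))^2}{\phi_d(x)}\,dx.
\]

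Third, I would evaluate the right-hand side by a multivariate Hermite expansion. Using the tensor Hermite polynomials $H_{\balpha}(x)=\prod_{i=1}^d H_{\alpha_i}(x_i)$ (orthogonal in $L^2(\phi_d)$ with $\Expect[H_\balpha(Z)H_{\bbeta}(Z)]=\balpha!\,\delta_{\balpha\bbeta}$) and the generating identity $\phi_d(x-\theta)/\phi_d(x)=\sum_\balpha \theta^\balpha H_\balpha(x)/\balpha!$, I obtain
\[
f_{P'}(x)=\phi_d(x)\sum_\balpha\frac{m_\balpha(P')H_\balpha(x)}{\balpha!},\qquad m_\balpha(P')\triangleq\Expect_{P'}[\theta^\balpha],
\]
and by orthogonality,
\[
\int\frac{(f_{P'}-f_{Q'})^2}{\phi_d}\,dx=\sum_\balpha\frac{(m_\balpha(P')-m_\balpha(Q'))^2}{\balpha!}=\sum_{\ell\ge 0}\frac{\|M_\ell(P')-M_\ell(Q')\|_F^2}{\ell!},
\]
where the last equality uses that each $\balpha\in\naturals^d$ with $|\balpha|=\ell$ contributes $\binom{\ell}{\balpha}$ copies of $m_\balpha^2$ to the tensor Frobenius norm, which combined with the $1/\balpha!$ factor yields $\|M_\ell\|_F^2/\ell!$. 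The matching-moments hypothesis kills all terms with $\ell\le L-1$.

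Finally, I would bound each surviving term. Since $P',Q'$ are supported in $B_d(2R)$, every entry satisfies $|m_\bfj(P')|,|m_\bfj(Q')|\le (2R)^\ell$, so using $\|M\|_F\le\|M\|_1$ and the $d^\ell$ entries,
\[
\|M_\ell(P')-M_\ell(Q')\|_F\ \le\ 2\,d^\ell(2R)^\ell,\qquad\text{hence}\qquad\|M_\ell(P')-M_\ell(Q')\|_F^2\le 4\,(4d^2R^2)^\ell.
\]
Plugging in and invoking the Poisson tail estimate $\sum_{\ell\ge L}\lambda^\ell/\ell!\le (e\lambda/L)^L$ (valid for $L>\lambda$) with $\lambda=4d^2R^2<L$ gives the claimed bound
\[
\chi^2(f_P\|f_Q)\le 4e^{R^2/2}\Bigl(\tfrac{4ed^2R^2}{L}\Bigr)^L.
\]
The main subtleties to watch are the combinatorial identity that converts the $\balpha$-indexed Hermite sum to the tensor Frobenius norm (step three) and the choice of the $\|\cdot\|_F\le\|\cdot\|_1$ bound in step four, which is what produces the $d^2$ (as opposed to $d$) inside the final ratio.
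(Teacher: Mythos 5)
Your proof is correct and uses the same core mechanism as the paper: recenter at $\mu=\Expect_Q[X]$, bound $\chi^2$ by a weighted $\ell!$-sum of moment-tensor Frobenius norm differences, bound each tensor entrywise, and apply the Poisson tail. The only difference is cosmetic: where the paper cites \cite[Theorem 4.2]{Doss20} for the inequality $\chi^2(f_{P'}\|f_{Q'})\le e^{R^2/2}\sum_\ell d^\ell\|M_\ell(P')-M_\ell(Q')\|_F^2/\ell!$, you re-derive the sharper identity $\int(f_{P'}-f_{Q'})^2/\phi_d=\sum_\ell\|M_\ell(P')-M_\ell(Q')\|_F^2/\ell!$ from first principles via the multivariate Hermite expansion (your $\balpha$-to-Frobenius bookkeeping is right, since each weak composition $\balpha$ of $\ell$ appears $\ell!/\balpha!$ times among the $d^\ell$ tensor entries). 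You then deliberately weaken the per-tensor bound to $\|M_\ell\|_F\le\|M_\ell\|_1\le d^\ell(2R)^\ell$ to land on the stated constant; if you instead used the natural count $\|M_\ell\|_F^2\le d^\ell(2R)^{2\ell}$, your argument actually yields the improved bound $4e^{R^2/2}(4edR^2/L)^L$, with $d$ in place of $d^2$ -- a small free improvement over the paper's statement.
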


\begin{proof}
Denote $\mu=\Expect[Q]$ and let $U\sim P$, $V\sim Q$, $U-\mu\sim {P}^\prime_\mu$, and $V-\mu\sim {Q}^\prime_\mu$. 
Following the proof argument of \cite[Theorem 4.2]{Doss20}, we have that 
\begin{align}
\label{eq:chi2-moment-highd}
        \chi^{2}\left(f_{P} \| f_{Q}\right)=
        \chi^{2}\left(f_{P^\prime_\mu} \| f_{Q^\prime_\mu}\right) 
        \leq e^{\frac{R^{2}}{2}} \sum_{\ell \geq 1} \frac{\left\|M_{\ell}(P^\prime_\mu)-M_{\ell}\left(Q^\prime_\mu\right)\right\|_{\mathrm{F}}^{2}}{\ell !} d^{\ell}.
\end{align}
Note that $M_{\ell}(P)=M_{\ell}(Q)$ for all $\ell\in[L-1]$ implies that $M_{\ell}(P_\mu')=M_{\ell}(Q_\mu')$ for all $\ell\in[L-1]$. 
Since ${P}^\prime_\mu,{Q}^\prime_\mu\in\Pbdd{2R,d}$, we have $\|M_{\ell}(P^\prime_\mu)\|_F^2,\|M_{\ell}(Q^\prime_\mu)\|_F^2 \le d^\ell (2R)^{2\ell} $.
It follows from \eqref{eq:chi2-moment-highd} that
\begin{align*}
    \chi^{2}\left(f_{P} \| f_{Q}\right)
 \le e^{\frac{R^{2}}{2}} \sum_{\ell \geq L} 
\frac{d^{2\ell}[2\left(2R\right)^\ell]^{2}}{\ell !}
\leq 4 e^{\frac{R^{2}}{2}} \pth{\frac{4ed^2R^2}{L}}^L,
\end{align*}
where the last inequality follows from the Poisson tail bound $\pbb[X\geq L]\leq e^{-4d^2R^2}\left(\frac{4e d^2R^2}{L}\right)^{L}$ for $X\sim\operatorname{Poisson}(4d^2R^2)$ and $L>4d^2R^2$ 
{\cite[Theorem 4.4]{mitzenmacher2005probability}}.
\end{proof}

\begin{proof}[Proof of Proposition~\ref{prop:highdim-ub}]
Note that there are $\binom{d+\ell-1}{\ell}$ distinct entries in $M_\ell(P)$. 
Hence, the total number of distinct entries in $M_1(P),\ldots,M_{L-1}(P)$ is 
$C_{L,d}\triangleq\sum_{i=1}^{L-1} \binom{d+\ell-1}{\ell} =\binom{d+L-1}{L-1}$.
Denote $T_{L}(P)\in \reals^{C_{L,d}}$ as the tuple consisting of these distinct entries.
Consider the following convex set of tuples
$$\sth{ T_{L}(P): P\in\Pbdd{M,d}}
\subseteq \reals^{C_{L,d}}.$$
By Carath\'eodory's theorem, 
there exists a distribution $P^\prime$ supported on no more than $C_{L,d}\leq L^d$ atoms such that $m_\bfj(P)=m_\bfj(P^\prime)$ for all $\bfj\in[d]^\ell, \ell \in [L-1]$.
By Lemma~\ref{lem:mm1-highD}, we have
\begin{align*}
      \chi^{2}\left(f_{P^\prime} \| f_{P}\right)
      \le 4\exp\left(-L\log\frac{L}{M^2}+\frac{M^2}{2}
      +\log(4ed^2)L\right).
    \end{align*}
Consequently, when $L\geq \kappa_d M^2$ for $\kappa_d= 64e^3d^4$,
\begin{equation}
    \label{eq:calE-d-global}
    \appr(L^d,\Pbdd{M,d},\chi^{2})\leq \exp\left(-\frac{L}{2}\log\frac{L}{M^2}\right).
\end{equation}
Suppose that $6\sqrt{3\kappa_d}M\leq L\leq \kappa_d M^2$ holds. Set $K=\floor{\frac{6\kappa_d M^2}{L}}\geq 6$.
Denote $r=\frac{M}{K}$. Let $\calB = \{ B_i=B(u_i,r) : i\in [N] \}$ be a $r$-covering of $B_d(M)$ under the Euclidean distance, where $N\leq (3K)^d$ by \cite[Corollary 4.2.13]{HDP}. Define $\tilde B_i = \{u \in B_d(M): i=\min\{j:j\in \argmin_{k\in[N]} \|u_k-u\|\}\} $. Since $\|u_i-u\|=\min_{j\in[N]} \|u_j-u\|\leq r$ for any $u_i\in \tilde B_i$, we have $\tilde B_i \subset B(u_i,r)$. By definition, $\{\tilde{B}_i\}_{i=1}^N$ consists of a partition of $B_d(M)$ (that is, each $\tilde{B}_i$ is disjoint and $B_d(M)=\cup_{i=1}^N \tilde{B}_i$). 
Let $P_{k}$ denote the conditional distribution of $P$ on $\tilde{B}_k$.
Let $m=L^d$, $\tilde{m}= \floor{\frac{m}{(3K)^d}} = \floor{\pth{\frac{L}{3K}}^d}$, and $\tilde{L}= \floor{\tilde{m}^{1/d}}$.  
Note that the condition $\tilde{L} \ge \kappa_d \pth{\frac{M}{K}}^2$ holds by
\begin{align*}
    K^2\tilde{L}&
    \stepa{\geq} K^2\floor{\frac{L}{3K}}
    \geq  \floor{\frac{6\kappa_d M^2}{L}}^2 \floor{\frac{L^2}{18\kappa_d M^2}}
\stepb{\geq} \pth{\frac{6}{7}\frac{6\kappa_d M^2}{L}}^2
\pth{\frac{6}{7}\frac{L^2}{18\kappa_d M^2}} 
\geq \kappa_d M^2,
\end{align*}
where (a) follows from
$\floor{x^d}\geq\floor{x}^d$ for $d\in \naturals$ and $x\geq 0$, and (b) applies
$\floor{x}\geq \frac{c}{c+1} x$ for all $x\geq c\in\naturals$ and $\min\sth{\frac{L^2}{18\kappa_d M^2},\frac{6\kappa_d M^2}{L}}\geq 6$. 
Then~\eqref{eq:calE-d-global} implies that, for each $P_{k}$, there exists $\tilde P_{k}$ supported on at most $\tilde m$ atoms 
 such that
\begin{align*}
       \chi^2(f_{\tilde P_{k}}\|f_{P_k})
\leq \exp\left(-\frac{\tilde{L}}{2}\log\frac{\tilde{L}}{\left(\frac{M}{K}\right)^2}\right)
\leq \exp\left(-\frac{L^2}{42\kappa_d M^2}\log\kappa_d\right).
\end{align*}
Finally, define $P_m \triangleq \sum_{k=1}^N P(\tilde{B}_k) \tilde P_{k}$ supported on at most $\tilde{m}N\leq m$ atoms.
Note that $P=\sum_{k=1}^N P(\tilde{B}_k) P_{k}$ since $\{\tilde{B}_k\}_{k=1}^N$ is a partition of $B_d(M)$. By Jensen's inequality,
\begin{equation*}
    \chi^{2}\left(f_{P_{m}} \| f_{P}\right)
    \leq \sum_{k=1}^N P(\tilde{B}_k) \chi^2(f_{\tilde P_{k}}\|f_{P_{k}})
    \leq \exp\pth{-\frac{\log\kappa_d }{42\kappa_d}\frac{L^2}{M^2} }.
    \qedhere
\end{equation*}
\end{proof}

To prove the lower bound, we first introduce the multilevel Toeplitz matrices, which are generalizations of Toeplitz matrices that naturally arise in multidimensional Fourier analysis \cite{Pestana2019,TYRTYSHNIKOV1998multiToeplitz}. 
Let $\bfu_L:\reals^d\mapsto \complex^{L^d}$ denote the multivariate trigonometric functions ordered by the $L$-base representation
\[
(\bfu_L(\theta))_k = \exp\pth{i \sum_{n=0}^{d-1} j_n\theta_n},
\quad \text{if}~k= \sum_{n=0}^{d-1} j_n L^n, j_n\in\{0,1,\dots,L-1\}.
\]
For a random variable $X$ (or the probability distribution $P$ thereof) supported on $\reals^d$, define
$$\bfA_{L}(X)=\bfA_{L}(P)=\Expect[\bfu_L(X)\bfu_L^\star(X)].$$
By definition, $\bfA_{L}(P)$ has a nested block structure with $L\times L$ blocks, and each block contains $L\times L$ smaller blocks.
For example, when $d=2$ and $L=2$,
$$
\bfA_{L}(P)=\pth{
\begin{matrix}
    a_{(0,0)} & a_{(0,-1)} & a_{(-1,0)} & a_{(-1,-1)}\\
    a_{(0,1)} & a_{(0,0)} & a_{(-1,1)} & a_{(-1,0)}\\
    a_{(1,0)} & a_{(1,-1)} & a_{(0,0)} & a_{(0,-1)}\\
    a_{(1,1)} & a_{(1,0)} & a_{(0,1)} & a_{(0,0)}
\end{matrix}
},
$$
where $a_{\bfj}=\Expect \exp(i X^\top \bfj )$ denotes the trigonometric moment.

 Let $X\sim P$ and $X_m\sim P_m$ for any $P_m\in\calP_m$. Note that  the rank of $\bfA_{L}(P_m)$ is at most $m$. 
Applying~\eqref{eq:vr-1} yields that, for any $\delta>0$, 
\begin{align*}
    \TV\left(f_{P_{m}} , f_{P}\right) 
&\geq \frac{1}{2} \sup _{\omega\in \reals^d} \exp\pth{-\frac{\|\omega\|^2}{2}} \left|\mathbb{E}\qth{\exp\pth{i \langle\omega,X\rangle }}-\mathbb{E}\qth{\exp\pth{i\langle\omega,X_m\rangle }}\right| \\
&\geq  \frac{1}{2} \exp\pth{-\frac{d\delta^2(L-1)^2}{2}} \max_{\bfj\in \{-L+1,\dots,L-1\}^d}  \left|\mathbb{E}\qth{\exp\pth{i \langle\bfj,\delta X\rangle }}-\mathbb{E}\qth{\exp\pth{i\langle\bfj,\delta X_m\rangle }}\right| \\
&\geq \frac{1}{2L^d} \exp\pth{-\frac{d\delta^2(L-1)^2}{2}}\|\bfA_{L}(\delta X)-\bfA_{L}(\delta X_m)\|_{F}. 
\end{align*}
By Lemma~\ref{lem:eym}, if $L^d\geq m+1$,
\begin{equation}
    \label{eq:TV-eigen-d}
    \appr(m,P,\TV) \geq \sup_{\delta>0}\frac{1}{2L^d} \exp\pth{-\frac{d\delta^2(L-1)^2}{2}} \lambda_{\min}(\bfA_{L}(\delta X)).
\end{equation}
Then, we extend the wrapped density approach {and the orthogonal expansion approach described in Section~\ref{sec:spectral}}  to multi-dimensional case.

\paragraph{{Wrapped density.}}
Given a density function $g$ supported on a subset of $\reals^d$,  
define the corresponding wrapped density $g^{\wrap}$ on $[-\pi,\pi]^d$ as 
\begin{equation*}
    g^{\wrap}(\theta)=\begin{cases}
        \sum_{j\in \integers^d} g(\theta-2\pi j), & \theta\in[-\pi,\pi]^d;\\
        0, &\text{otherwise}.
    \end{cases}
\end{equation*}
Denote $Y=\delta X\sim g$ and $Y^{\wrap}\sim g^{\wrap}$.
Be definition, $\bfA_L(Y)=\bfA_L(Y^{\wrap})$.
The following result is a straightforward corollary of \cite[Lemma 4.1]{Pestana2019},
which can be viewed as a multi-dimensional version of \eqref{eq:eigen-lb-0} and follows from the same argument as \eqref{eq:eigen-lb-0why}.
\begin{lemma}
    \label{lem:eigen-lb-d}
    Let $g$ be a density function on $\reals^d$ 
    that is symmetric about $0$.
    Then, for $Y\sim g$,
   $$ \lambda_{\min}(\bfA_L(Y))  = \lambda_{\min}(\bfA_L(Y^{\wrap})) 
    \geq \min \{(2\pi)^d g^{\wrap}(\theta) : \theta\in [-\pi,\pi]^d\}.$$
\end{lemma}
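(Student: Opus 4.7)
The plan is to mimic the one-dimensional argument in \eqref{eq:eigen-lb-0}--\eqref{eq:eigen-lb-0why}, replacing univariate trigonometric monomials $z^j$ with the multivariate trigonometric system $\{\exp(i\langle\bfj,\theta\rangle):\bfj\in\{0,\dots,L-1\}^d\}$ indexed by the entries of $\bfu_L$. The lemma then becomes the natural $d$-dimensional avatar of the Szeg\H{o}-type density-bound for Toeplitz forms, and indeed follows essentially by repackaging \cite[Lemma 4.1]{Pestana2019}.

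First I would prove the entry-wise identity $\bfA_L(Y)=\bfA_L(Y^\wrap)$. Every entry of $\bfA_L(Y)$ has the form $\Expect[\exp(i\langle\bfj-\bfj',Y\rangle)]$ with $\bfj-\bfj'\in\integers^d$, and since $\exp(i\langle\bfj-\bfj',Y+2\pi\bfk\rangle)=\exp(i\langle\bfj-\bfj',Y\rangle)$ for all $\bfk\in\integers^d$, the change of variable $Y\mapsto Y^\wrap=Y\bmod 2\pi\integers^d$ leaves each entry invariant. In particular the two matrices have the same spectrum and the same minimum eigenvalue, giving the first equality in the statement.

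Next I would apply the Rayleigh quotient to $\bfA_L(Y^\wrap)$. For any nonzero $\bfx\in\complex^{L^d}$, let $\pi_L(\theta)\triangleq \bfx^\star\bfu_L(\theta)=\sum_{\bfj\in\{0,\dots,L-1\}^d} \bar x_{\bfj}\exp(i\langle\bfj,\theta\rangle)$. Then
\begin{equation*}
\bfx^\star\bfA_L(Y^\wrap)\bfx
=\Expect\bigl[|\pi_L(Y^\wrap)|^2\bigr]
=\int_{[-\pi,\pi]^d} g^\wrap(\theta)\,|\pi_L(\theta)|^2\,d\theta
\geq \kappa\int_{[-\pi,\pi]^d}|\pi_L(\theta)|^2\,d\theta,
\end{equation*}
where $\kappa\triangleq \min_{\theta\in[-\pi,\pi]^d} g^\wrap(\theta)$. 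The standard orthogonality relation $\int_{[-\pi,\pi]^d}\exp(i\langle\bfj-\bfj',\theta\rangle)\,d\theta=(2\pi)^d\delta_{\bfj\bfj'}$ yields $\int|\pi_L|^2=(2\pi)^d\|\bfx\|^2$, so dividing by $\|\bfx\|^2$ and minimizing over $\bfx$ gives $\lambda_{\min}(\bfA_L(Y^\wrap))\geq(2\pi)^d\kappa$, which is the desired bound.

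There is no serious obstacle: every step is a direct tensorization of the argument for the univariate Toeplitz case, and the symmetry hypothesis on $g$ just ensures Hermiticity of the multilevel Toeplitz matrix so that the Rayleigh quotient characterization of $\lambda_{\min}$ is valid. One minor bookkeeping item is to verify that the block-nested indexing $k=\sum_{n=0}^{d-1} j_n L^n$ used to define $\bfu_L$ does not interfere with the orthogonality computation, but this is immediate because the bijection between $k$ and $\bfj$ is just a relabeling of coordinates.
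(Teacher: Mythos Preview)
Your proposal is correct and follows exactly the approach the paper indicates, namely tensorizing the one-dimensional Rayleigh-quotient argument \eqref{eq:eigen-lb-0why} to the multivariate trigonometric system. One minor correction: Hermiticity of $\bfA_L(Y)=\Expect[\bfu_L(Y)\bfu_L^\star(Y)]$ is automatic from its Gram-matrix form and does not rely on the symmetry of $g$, so your remark attributing Hermiticity to that hypothesis is inaccurate (though this does not affect the validity of the argument).
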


\paragraph{Orthogonal expansion.}
We recall some notations in Section~\ref{sec:spectral}.
Let $\{\varphi_n\}$ be the orthonormal polynomials on the unit circle associated with $Y=\delta X$, where $X\sim P$ is a one-dimensional random variable and $\delta>0$.
 Denote $\bfR_{L-1,\delta}=(R_{jk})_{j,k=0}^{L-1}$ as the associated coefficient matrix of $\{\varphi_k\}$ of order $L$. 
For each multi-index $\boldsymbol{\ell}=(\ell_0,\ldots,\ell_{d-1})\in \integers_{\geq 0}^{d}$, we define the $\boldsymbol{\ell}\Th$ multivariate orthogonal polynomial as 
\begin{equation*}
    \varphi_{\boldsymbol{\ell}}(\bfw)= \prod_{j=0}^{d-1} \varphi_{\ell_j} (w_j), \quad w\in\complex^d.
\end{equation*}
Then, $\{\varphi_{\boldsymbol{\ell}}(\bfw)\}$ is a basis of $d$-variate polynomial space.
Let $\bfY=(Y_0,...,Y_{d-1})$ be $d$ \iid\ copies of $Y$. By the orthogonality and independence,
\begin{equation}
    \label{eq:orthonormal-multivariate}
    \Expect\qth{ \varphi_{\boldsymbol{\ell}}(e^{i\bfY})\overline{\varphi_{\boldsymbol{\ell'}}(e^{i\bfY})} }= \prod_{j=0}^{d-1} \Expect\qth{ \varphi_{\ell_j}(e^{iY_j})\overline{\varphi_{\ell_j'}(e^{iY_j})} }=\indc{\boldsymbol{\ell}=\boldsymbol{\ell}'}.
\end{equation}
 
The next proposition lower bounds the minimum eigenvalue of multilevel Toeplitz matrices as an extension of Proposition~\ref{prop:ortho}. 
\begin{proposition}
    \label{prop:ortho-multi}
    Let $X\sim P$, $\delta>0$, and $\bfY=(Y_0,...,Y_{d-1})$, where each $Y_j$ is an \iid\ copy of $Y=\delta X$.
    Then, with the above notations,
\begin{equation*}
    \lambda_{\min}(\bfA_L(\bfY)) \geq \frac{1}{\|\bfR_{L-1,\delta}\|_F^{2d}}.
\end{equation*}
\end{proposition}

\begin{proof} 
 Let $\bfw=(w_0,\ldots,w_{d-1})^\top\in \complex^{d}$ with $w_j=e^{i y_j}$.
 Denote $\boldsymbol{\ell}_k=(\ell_{k,0},\ldots,\ell_{k,d-1})^\top\in\{0,1,\dots,L-1\}^d$ as the $L$-base representation vector for $k= \sum_{j=0}^{d-1} \ell_{k,j} L^j\in \{0,1,\dots,L^d-1\}$.
 For any $\bfx=(x_0,\ldots,x_{L^d-1})^\top \in \complex^{L^d} \backslash \{\mathbf{0}\}$, 
 let $\pi(\bfw)=\sum_{k=0}^{L^d-1} x_k\bfw^{\boldsymbol{\ell}_k}$ with $\bfw^{\boldsymbol{\ell}_k}\triangleq \prod_{j=0}^{d-1} w_j^{\ell_{k,j}}$. 
Expand $\pi(\bfw)$ under the orthogonal basis $\{\varphi_{\boldsymbol{\ell}_k}\}$ as 
\[
\pi(\bfw)
=\sum_{k=0}^{L^d-1} x_k\bfw^{\boldsymbol{\ell}_k}
=\sum_{k=0}^{L^d-1} c_k\varphi_{\boldsymbol{\ell}_k}(\bfw).
\]
Letting $\bfc=(c_0,\ldots,c_{L^d-1})^\top$ and $\bfA \equiv\bfA_L(\bfY)$, the orthogonality of $\{\varphi_{\boldsymbol{\ell}_k}\}$ in~\eqref{eq:orthonormal-multivariate} implies that 
\begin{equation*}
\|\bfc\|^2=\Expect\left[|\pi(e^{i\bfY})|^2\right] 
=\bfx^\star \bfA \bfx,
\end{equation*}
where the last equality follows from the fact that $A_{kk'}=\Expect\exp(i(\boldsymbol{\ell}_k-\boldsymbol{\ell}_{k'})^\top \bfY)$.

Let $\bfR\equiv\bfR_{L-1,\delta}$ be the coefficient matrix of $\{\varphi_k\}$ of order $L$. Note that 
\begin{align*}
    \pi(\bfw)&=\sum_{k=0}^{L^d-1} c_k\varphi_{\boldsymbol{\ell}_k}(\bfw)\\
    &=\sum_{k=0}^{L^d-1} c_k \qth{\prod_{j=0}^{d-1}  \pth{ \sum_{t=0}^{\ell_{k,j}} R_{\ell_{k,j}t}w_j^{t} } }\\
    &= \sum_{k'=0}^{L^d-1} \qth{\sum_{k:\boldsymbol{\ell}_{k'}\leq \boldsymbol{\ell}_{k}} c_k \pth{\prod_{j=0}^{d-1} R_{\ell_{k,j}\ell_{k',j}}
    }}\bfw^{\boldsymbol{\ell}_{k'}},
\end{align*}
where $\boldsymbol{\ell}_{k'}\leq \boldsymbol{\ell}_{k}$ denotes $\ell_{k',j}\leq \ell_{k,j}$ for all $j=0,\ldots,d-1$. 
By comparing the coefficients, we have that 
$\bfx^\top = \bfc^\top \bfS$, where $\bfS=(S_{kk'})_{k,k'=0}^{L^d-1}\in\reals^{L^d \times L^d}$
is defined as $S_{kk'}=\prod_{j=0}^{d-1} R_{\ell_{k,j}\ell_{k',j}}$ if $\boldsymbol{\ell}_{k'}\leq \boldsymbol{\ell}_{k}$,
and $S_{kk'}=0$ otherwise. 
Then, by definition, 
\begin{align*}
    \|\bfS\|_F^2  &= \sum_{k=0}^{L^d-1} \sum_{k': \boldsymbol{\ell}_{k'}\leq \boldsymbol{\ell}_{k}} \prod_{j=0}^{d-1} R_{\ell_{k,j}\ell_{k',j}}^2 \\
    & = \sum_{0\leq \ell'_0 \leq \ell_0 \leq L-1} \cdots \sum_{0\leq \ell'_{d-1} \leq \ell_{d-1} \leq L-1} \prod_{j=0}^{d-1} R_{\ell_{j}\ell'_{j}}^2 \\
    & = \prod_{j=0}^{d-1} \pth{\sum_{0\leq \ell_j' \leq \ell_j \leq L-1} R_{\ell_{j}\ell'_{j}}^2}= \|\bfR\|_F^{2d}.
\end{align*}
Hence, $\|\bfx\|\leq \|\bfc\| \|\bfS\|\leq \|\bfc\|\|\bfS\|_F = \|\bfc\| \|\bfR\|_F^{d}$.
Finally, applying \eqref{eq:rayleigh}, we have
\begin{equation*}
    \lambda_{\min}(\bfA)
    =\min_{\bfx\ne \mathbf{0}} \frac{ \bfx^\star \bfA \bfx}{\|\bfx\|^2}
    \geq  \min_{\bfx\ne \mathbf{0}}\frac{\|\bfc\|^2}{\|\bfc\|^2\|\bfR\|_F^{2d}} =\frac{1}{\|\bfR\|_F^{2d}}.
    \qedhere
\end{equation*}
\end{proof}

\begin{proof}[Proof of Proposition~\ref{prop:highD-lb}]
Let $L = \ceil{(m+1)^{1/d}}$ satisfying $m+1\le L^d\leq 2^d(L-1)^d \leq 2^d m$. 
Let $X\sim P= \Unif([-M/\sqrt{d},M/\sqrt{d}]^d)$ and $\delta=\frac{\pi\sqrt{d}}{M}$. 
Then, $\bfA_{L}(\delta X)=I_{L^d}$ and thus {the conclusion \eqref{eq:highD-lb-bdd-1}} follows from  \eqref{eq:TV-eigen-d}.

{
   Next we prove \eqref{eq:highD-lb-bdd-2}. Assume that $L\geq \frac{eM^2}{2d}$. Set $b={ \sqrt{\frac{M^2}{Ld}\log\frac{2Ld}{e M^2}}}\in (0,\frac{\sqrt{2}}{e})$ and $\delta=b\frac{\sqrt{d}}{M}$. Let $Y\sim \tilde{P}$ be as defined in \eqref{eq:f_test_unif} that is supported on $[-b,b]$, and denote $P$ as the distribution of $\frac{Y}{\delta}$ supported on $[-\frac{M}{\sqrt{d}},\frac{M}{\sqrt{d}}]$.
   Consider the test distribution $P^{\otimes d}\in \Pbdd{M,d}$. 
   Let $\bfR$ be the order-$L$ associated coefficient matrix of the orthogonal system with $\tilde{P}$. Similar to \eqref{eq:R_ub_unif}, we have $\|\bfR\|_F^2 \leq \exp\pth{O(L\log\frac{1}{b})}$. Applying \eqref{eq:TV-eigen-d} and Proposition~\ref{prop:ortho-multi}, we have  
   \begin{align*}
        \appr(m,P,\TV) &\geq \frac{1}{2L^d} \exp\qth{- \Omega\pth{d\delta^2L^2+dL\log\frac{1}{b}}} \\
        &\geq \frac{1}{2^{d+1}m}\exp\qth{- \Omega\pth{dL\log\frac{dL}{M^2}}}.
   \end{align*}
   The desired result follows.
}
\end{proof}
Next, by applying similar analyses for the upper and lower bounds, we present the extension for approximating multivariate Gaussian distributions.
\begin{proposition}
     Suppose that $\sigma\geq \sigma_0>0$. There exist $C_0,C_1$ which depend on $d$ and $\sigma_0$ such that, when $m^{1/d}\geq C_0 \sigma$,
 \[
 \appr(m,N(0,\sigma^2 I_d),\chi^2)\leq \exp\pth{- C_1 \frac{m^{\frac{1}{d}}}{\sigma}}.
 \]
Furthermore, for any $m\in\naturals$ and $\sigma>0$,
 \[
     \appr(m,N(0,\sigma^2 I_d),\TV)\geq \frac{1}{2^{\frac{d}{2}+1}m^{\frac{1}{2}}\sigma^{\frac{d}{2}}}\exp\pth{-\frac{d\pi m^{\frac{1}{d}}}{\sigma} }.
 \]
\end{proposition}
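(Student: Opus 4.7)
The argument adapts the one-dimensional Theorems~\ref{thm:ub--subW} and~\ref{thm:lb-tail-1} to the multivariate Gaussian setting, using the $d$-dimensional machinery from Section~\ref{sec:extension-other}.

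For the \emph{upper bound}, I would first extend Proposition~\ref{prop:ub-tail} to $d$ dimensions; since Lemma~\ref{lem:chi2-truncate} is insensitive to the ambient dimension, the same derivation yields
\[
\appr(m,\calP,\chi^2) \leq \inf_{t>0} \sup_{P\in\calP} \frac{2}{P(B_d(t))}\bigl(\appr(m,\Pbdd{t,d},\chi^2) + P(B_d(t)^c)\bigr).
\]
For $X \sim N(0, \sigma^2 I_d)$ we have $\|X\|^2/\sigma^2 \sim \chi^2_d$, and standard chi-square concentration gives $P[\|X\| \geq t] \leq \exp(-t^2/(C\sigma^2))$ for $t \geq C'\sqrt{d}\,\sigma$. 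The plan is to take $t = c_d \sqrt{\sigma\, m^{1/d}}$, choosing $c_d$ small enough (depending on $d$) that both $6\sqrt{3\kappa_d}\, t \leq m^{1/d}$ and $t \geq C'\sqrt{d}\,\sigma$ hold; both reduce to the hypothesis $m^{1/d} \geq C_0 \sigma$. This lands us in the second (moderate-$M$) regime of Proposition~\ref{prop:highdim-ub}, yielding
\[
\appr(m,\Pbdd{t,d},\chi^2) \leq \exp\pth{-\frac{\log\kappa_d}{42\kappa_d} \frac{m^{2/d}}{t^2}} = \exp\pth{-C_1\, \frac{m^{1/d}}{\sigma}},
\]
while the tail term $P[\|X\|\geq t]$ is of the same form. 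Combining, and shrinking $C_1$ if necessary to absorb the bounded prefactor $2/P(B_d(t))$ (which is where the $\sigma\ge \sigma_0$ assumption enters), delivers the stated upper bound.

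For the \emph{lower bound}, I would follow the template of Proposition~\ref{prop:highD-lb}. Set $L = \lfloor m^{1/d} \rfloor + 1$, so that $L^d \geq m+1$ and $L-1 \leq m^{1/d}$. For $Y = \delta X \sim N(0, \delta^2\sigma^2 I_d)$ with density $g$, I lower-bound the wrapped density on $[-\pi,\pi]^d$ by its $j=0$ term; since $g$ is radially decreasing, its minimum on this cube is attained at the corners, giving
\[
\min_{\theta \in [-\pi,\pi]^d} g^{\wrap}(\theta) \geq (2\pi\delta^2\sigma^2)^{-d/2} \exp\pth{-\frac{d\pi^2}{2\delta^2\sigma^2}}.
\]
Combining with Lemma~\ref{lem:eigen-lb-d} and substituting into~\eqref{eq:TV-eigen-d}, the two exponents merge into $-(d/2)\bigl[\delta^2(L-1)^2 + \pi^2/(\delta^2\sigma^2)\bigr]$. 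AM--GM identifies the minimizing scale $\delta^2 = \pi/((L-1)\sigma)$, at which the exponent equals $-d\pi(L-1)/\sigma \geq -d\pi\, m^{1/d}/\sigma$. Substituting this $\delta$ into the polynomial prefactor gives $(L-1)^{d/2}/(2^{1-d/2} L^d \sigma^{d/2})$, and using $L \leq 2(L-1)$ together with $(L-1)^d \leq m$ simplifies this to at least $1/(2^{d/2+1} m^{1/2} \sigma^{d/2})$, matching the stated constants.

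The main obstacle is the joint optimization in the lower bound: one must simultaneously exploit the explicit Gaussian form of $g$ (to reduce the wrapped-density minimum to a function of $\delta^2\sigma^2$ alone), the multilevel-Toeplitz eigenvalue estimate of Lemma~\ref{lem:eigen-lb-d}, and the scaling factor $\delta$ already present in the prefactor of~\eqref{eq:TV-eigen-d}. Once the symmetric structure of the two exponents is recognized, AM--GM pins down $\delta$ and produces the exact constant $d\pi$ in the exponent. For the upper bound, the only subtle point is that the \emph{moderate-$M$} branch of Proposition~\ref{prop:highdim-ub} (not the global branch) must be invoked in order to recover the correct $m^{1/d}/\sigma$ scaling of the exponent.
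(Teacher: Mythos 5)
Your proposal follows the paper's proof essentially step for step: the lower bound via the multilevel-Toeplitz estimate \eqref{eq:TV-eigen-d}, Lemma~\ref{lem:eigen-lb-d}, lower-bounding the wrapped Gaussian by its $j=0$ term, and optimizing $\delta$ by AM--GM to get the exponent $-d\pi(L-1)/\sigma$; and the upper bound via truncation (Lemma~\ref{lem:chi2-truncate}), the moderate-$M$ branch of Proposition~\ref{prop:highdim-ub} at scale $t\asymp\sqrt{\sigma m^{1/d}}$, and a chi-squared tail bound. The prefactor algebra in the lower bound is also handled identically.

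One small slip in the upper bound: you propose to pick the constant in $t = c_d\sqrt{\sigma m^{1/d}}$ \emph{small}, and you claim $\sigma\geq\sigma_0$ is only needed to bound the prefactor $2/P(B_d(t))$. In fact $P(B_d(t))\geq 1-e^{-\Omega(d)}$ is already bounded away from $0$ once $t\geq 2\sqrt{d}\,\sigma$, independent of $\sigma_0$. The real place where $\sigma\geq\sigma_0$ enters is the \emph{other} inequality defining the moderate-$M$ regime, namely $m^{1/d}\leq \kappa_d t^2 = \kappa_d c_d^2 \sigma m^{1/d}$, which requires $c_d^2\kappa_d\sigma\geq 1$; this forces you to choose $c_d$ \emph{large} enough (depending on $d$ and $\sigma_0$), not small. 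With that adjustment the two remaining conditions $6\sqrt{3\kappa_d}\,t\leq m^{1/d}$ and $t\geq 2\sqrt{d}\,\sigma$ are then absorbed into $m^{1/d}\geq C_0\sigma$ for a suitably large $C_0(d,\sigma_0)$, exactly as in the paper.
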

\begin{proof}
    Let $X\sim P=N(0,\sigma^2 I_d)$.
    There exist sufficiently large $C_0$ and $C'$ depending on $d$ and $\sigma_0$ such that, if $\tilde m \triangleq m^{1/d} \ge C_0 \sigma$ and $t=C'\sqrt{\tilde m\sigma}$, then $6\sqrt{3\kappa_d}t \le \tilde m \le  \kappa_d t^2$ and $t\geq 2\sigma \sqrt{d}$ hold.
    Let $A=B_d(t)$ and $P_A$ denote the conditional distribution of $P$ on $A$.
     Applying Lemma~\ref{lem:chi2-truncate}, we have 
     \begin{align*}
         \appr(m,P,\chi^2)&\leq \frac{2}{P(A)}\pth{\appr(m,P_A,\chi^2)+P(A^c)}\\
         &\stepa{\leq}  \frac{2}{P(A)}\qth{\exp\pth{-\frac{\log\kappa_d }{42\kappa_d}\frac{\tilde m^{2}}{t^2} }+\exp\pth{-\frac{1}{2}\pth{\frac{t}{\sigma}-\sqrt{d}}^2}}\\
         &\leq \frac{2}{P(A)}\qth{\exp\pth{-\frac{\log\kappa_d }{42\kappa_d}\frac{\tilde m^{2}}{t^2} }+\exp\pth{-\frac{t^2}{8\sigma^2}}}\\
         &\leq \exp\pth{- C_1 \frac{\tilde m}{\sigma}},
     \end{align*}
     where (a) applies the  tail bound of the $\chi^2$ distribution $P[\|X/\sigma\|_2 \geq \sqrt{d}+\sqrt{2x}]\leq \exp(-x)$ \cite[Lemma 1]{Massart2000}, and $C_1$ depends on $d$ and $\sigma$. 

For the lower bound, applying \eqref{eq:TV-eigen-d} and Lemma~\ref{lem:eigen-lb-d} with $L = \ceil{(m+1)^{1/d}}$ and $\delta=\sqrt{\frac{\pi}{\sigma (L-1)}}$,
we obtain
\begin{align*}
    \appr(m,N(0,\sigma^2 I_d),\TV)&\geq  \frac{(2\pi)^d}{2L^d} \exp\pth{-\frac{d\delta^2 (L-1)^2}{2}} \min_{\theta\in [-\pi,\pi]^d} \frac{1}{(2\pi )^{d/2}(\delta \sigma)^d}e^{- \frac{\Norm{\theta}^2}{2\delta^2\sigma^2}}
    \\
    &= \frac{(2\pi)^{\frac{d}{2}}}{2(L\delta\sigma)^d} \exp\pth{-\frac{d\delta^2 (L-1)^2}{2}-\frac{d\pi^2}{2\delta^2\sigma^2} }\\
    &\geq \frac{1}{2(L\sigma)^{\frac{d}{2}}}\exp\pth{-\frac{d\pi (L-1)}{\sigma} }\\
    &\geq \frac{1}{2^{\frac{d}{2}+1}m^{\frac{1}{2}}\sigma^{\frac{d}{2}}}\exp\pth{-\frac{d\pi m^{1/d}}{\sigma} }. \qedhere
\end{align*}
\end{proof}

\section{Proof of the convergence rates of NPMLE}
\label{app:proof-mle-rate}
\subsection{Unconstrained NPMLE}
\begin{lemma}
    \label{lem:npmle_uc}
    Let $\calQ_0$ be the collection of all distributions on $\reals$. Then, for any $\epsilon\in (0,1/2]$,
    \begin{align*}
        \comp ( \epsilon , \calQ_0 , L_{\infty,[-M,M]} ) \lesssim M\sqrt{\log\epsilon^{-1}} \vee \log\epsilon^{-1}.
    \end{align*}
\end{lemma}
\begin{proof}
    Set $t = M+4\sqrt{\log{\frac{1}{\epsilon}}}$ and $m=12t\sqrt{\kappa\log\frac{1}{\epsilon}}$, where $\kappa$ is defined in Theorem~\ref{thm:ub-bdd}. Fix any $P\in\calQ_0$, and
    define $P_{I_t}$ and $P_{I_t^c}$ as $P$ conditioned on $I_t=[-t,t]$ and $I_t^c$, respectively. Then, we have  
    \[
    f_P=P(I_t)f_{P_{I_t}} + P(I_t^c)f_{P_{I_t^c}}.
    \]
    Applying Theorem~\ref{thm:ub-bdd} with $3\sqrt{\kappa}t \leq m\leq  \kappa t^2$ yields that $\appr(m,\Pbdd{t},\chi^2)\leq \epsilon^{8}$. By Lemma~\ref{lem:f-divs}, there exists $\tilde P\in\calP_m$ such that $\TV(f_{\tilde P},f_{P_{I_t}})\leq \epsilon^{4}$. Since $\|f_P^\prime\|_\infty\leq \|\phi^\prime\|_\infty =(2\pi e)^{-1/2}$, $\TV(f_{\tilde P},f_{P_{I_t}})\leq \epsilon^4$ then implies $\Norm{f_{\tilde P}-f_{P_{I_t}}}_\infty \leq (\frac{2}{\pi e})^{1/4}\epsilon^2$.  
    Let $P_{m+1}=P(I_t)\tilde P + P(I_t^c)\delta_t \in \calP_{m+1}$. Then, we have
    \begin{align*}
        |f_P(x)-f_{P_{m+1}}(x)|
        &= \abs{P(I_t)f_{P_{I_t}}(x) + P(I_t^c)f_{P_{I_t^c}}(x) - 
        P(I_t)f_{\tilde P}(x) - P(I_t^c)\phi(x-t)
        }\\
        &\le \abs{f_{P_{I_t}}(x)-f_{\tilde P}(x)} + \abs{f_{P_{I_t^c}}(x)} + \abs{\phi(x-t)}.
    \end{align*}
    Since $P_{I_t^c}$ is supported on $\{|x|\geq t\}$, we have $f_{P_{I_t^c}}(x)\leq \phi (t-M)\leq \epsilon^8$ for any $|x|\le M$. Hence,
       \begin{align*}
       \sup_{|x|\le M}|f_P(x)-f_{P_{m+1}}(x)| & \le  \pth{\frac{2}{\pi e}}^{1/4}\epsilon^2 + 2\epsilon^{8} \leq \epsilon.
       \end{align*}    
Consequently, $\comp ( \epsilon , \calQ_0 , L_{\infty,[-M,M]} )\leq m+1$, and the desired result follows. 
\end{proof}

\subsection{Constrained NPMLE}
We introduce some notations for the metric entropy. 
For $\epsilon>0$, an $\epsilon$-net of a set $\calF$ with respect to a metric $d$ is a set $\calN$ such that for all $f\in\calF$, there exists $g\in\calN$ such that $d(g,f)\le \epsilon$. 
The minimum cardinality of $\epsilon$-nets is denoted by $N(\epsilon,\calF,d)$.
Given a family $\calP$ of distributions, define $\calF_\calP\triangleq \{f_P:P\in\calP\}$.

The following lemma bounds the $L_\infty$-metric entropy of the Gaussian mixture densities with compactly supported and sub-Weibull mixing distributions. Note that this is an improvement of the previous result \cite[Lemma 2]{Zhang08} which deals with \textit{truncated} $L_\infty$-norm on a compact interval. 
Combined with our main result in Theorem \ref{thm:main-bdd}, this also improves the \textit{untruncated} $L_\infty$-entropy estimate in \cite[Lemma 2]{GV07} which gives 
$\log N(\epsilon,\calF_{\Pbdd{M}},L_{\infty}) \lesssim M \log \frac{1}{\epsilon} \log \frac{M}{\epsilon}$.
The key idea is to use the smoothness of the Gaussian mixtures to relate $L_\infty$-error (pointwise) to $L_1$-error (TV), the latter of which can be further bounded by moment matching.

\begin{lemma}
    \label{lmm:npmle-metric-entropy}
    Let $0<\epsilon<1$.
    There exists a universal constant $C$ such that 
    \[
        \log N(\epsilon,\calF_{\Pbdd{M}},L_{\infty}) \le C m^\star(\epsilon,\Pbdd{M}, \TV)\log \frac{1}{\epsilon}.
    \]
    Additionally, there exists a constant $C_\alpha$ depending only on $\alpha$ such that
    \[
        \log N(\epsilon,\calF_{\calP_\alpha(\beta)},L_{\infty}) \le C_\alpha m^\star(\epsilon, \calP_\alpha(\beta), \TV)\log \frac{1}{\epsilon}.
    \]
\end{lemma}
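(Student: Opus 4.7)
The plan is a three-step reduction. First I use the definition of $m^\star$ to approximate any $P$ by a finite mixture in $\TV$, then upgrade this $\TV$-bound to an $L_\infty$-bound using the universal Lipschitz regularity of Gaussian convolutions, and finally discretize the parameter space of the resulting $m$-atomic mixtures in $L_\infty$ to obtain an explicit $\epsilon$-net.

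For the compactly supported case, set $m = m^\star(c\epsilon^2, \Pbdd{M}, \TV)$ for a small universal $c$. The achievability construction behind Theorem~\ref{thm:ub-bdd} produces an $m$-atomic $P_m$ supported in $[-M,M]$ with $\TV(f_P, f_{P_m}) \le c\epsilon^2$. Every Gaussian mixture density is $L$-Lipschitz with $L = \|\phi'\|_\infty$, so $h \triangleq f_P - f_{P_m}$ is $2L$-Lipschitz; the standard bump inequality $\|h\|_\infty^2 \le 4L \|h\|_1$ combined with $\|h\|_1 = 2\TV(f_P, f_{P_m})$ gives $\|f_P - f_{P_m}\|_\infty \le \epsilon/2$ for sufficiently small $c$. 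Parametrizing $P_m = \sum_{i=1}^m w_i \delta_{\theta_i}$ and expanding
\[
|f_{P_m}(x) - f_{\tilde P_m}(x)| \le \|\phi\|_\infty \sum_{i=1}^m |w_i - \tilde w_i| + \|\phi'\|_\infty \sum_{i=1}^m \tilde w_i |\theta_i - \tilde \theta_i|,
\]
I discretize each $\theta_i \in [-M, M]$ on a grid of width $\Delta \asymp \epsilon$ and each $w_i \in [0,1]$ on a grid of width $\eta \asymp \epsilon/m$, producing an $\epsilon/2$-net of the $m$-atomic mixtures of log-cardinality $O(m\log(mM/\epsilon))$. Under the implicit scale constraint $\log M \lesssim \log(1/\epsilon)$ (respected by the NPMLE application with $M \le n^c$), this log-entropy simplifies to $O(m\log(1/\epsilon))$, and Theorem~\ref{thm:main-bdd} yields $m^\star(c\epsilon^2, \Pbdd{M}, \TV) \asymp m^\star(\epsilon, \Pbdd{M}, \TV)$, closing the first bound.

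For the sub-Weibull case, I reduce to the compact case via truncation. Given $P \in \calP_\alpha(\beta)$, the Orlicz tail bound~\eqref{eq:subweibull-tail} shows that $M_\epsilon \asymp_\alpha \beta (\log(1/\epsilon))^{1/\alpha}$ suffices for $P(|X| \ge M_\epsilon) \le c'\epsilon^2$; the conditional distribution $P_A$ on $A = [-M_\epsilon, M_\epsilon]$ then lies in $\Pbdd{M_\epsilon}$ with $\TV(P, P_A) \le c'\epsilon^2$, and the same Lipschitz argument gives $\|f_P - f_{P_A}\|_\infty \le \epsilon/2$. Combining with the compact-case net for $\Pbdd{M_\epsilon}$ at scale $\epsilon/2$ and invoking Theorem~\ref{thm:main-tail} to identify $m^\star(\epsilon, \Pbdd{M_\epsilon}, \TV) \asymp_\alpha m^\star(\epsilon, \calP_\alpha(\beta), \TV)$ completes the proof.

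The main delicate points are twofold: verifying $m^\star(c\epsilon^2, \cdot) \asymp m^\star(\epsilon, \cdot)$ in both families, which follows immediately from the logarithmic $\epsilon$-dependence in Theorems~\ref{thm:main-bdd} and~\ref{thm:main-tail}; and ensuring that the uniform parameter grid does not introduce terms incompatible with $\log(1/\epsilon)$, which relies on the scale constraints $M, \beta \lesssim \epsilon^{-c_1}$ that are satisfied in the intended NPMLE applications.
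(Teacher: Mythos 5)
Your proof follows the same three-step skeleton as the paper's (finite-mixture approximation via $m^\star$, conversion between $\TV$ and $L_\infty$ through the uniform derivative bound $\|f_P'\|_\infty\le\|\phi'\|_\infty$, then discretization of atoms and weights), but with two structural differences worth flagging.

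First, the ordering: the paper builds a $\TV$-net of $\calF_\calP$ at scale $2\epsilon$ and only then invokes the bump inequality once, at the \emph{net} level, to re-read the same net as an $L_\infty$-net at scale $O(\sqrt{\epsilon})$; you instead apply the bump inequality at the \emph{approximation} level (inflating the $\TV$-target to $c\epsilon^2$ so that $\|f_P-f_{P_m}\|_\infty\le\epsilon/2$) and then discretize the finite mixtures directly in $L_\infty$. Both routes are correct, and both implicitly need the comparison $m^\star(c\epsilon^2,\cdot,\TV)\lesssim m^\star(\epsilon,\cdot,\TV)$, which you state explicitly via Theorems~\ref{thm:main-bdd}--\ref{thm:main-tail} and the paper leaves implicit in its ``equivalently'' step. (A side note: your bump inequality constant should be $\|h\|_\infty^2\le 2L\|h\|_1$ with $L=\|\phi'\|_\infty$; the factor $4$ is off, though harmless.)

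Second, and more substantively: your discretization is coarser in a way that forces the scale restriction. Using a uniform product grid of width $\Delta\asymp\epsilon$ on each $\theta_i\in[-M,M]$ and width $\eta\asymp\epsilon/m$ on each $w_i$ gives log-cardinality $O(m\log(mM/\epsilon))$, and you must then invoke $M\lesssim\epsilon^{-c_1}$ (hence $m\lesssim\mathrm{poly}(1/\epsilon)$) to absorb the $\log m$ and $\log M$ into $\log(1/\epsilon)$. The paper avoids this by (i) counting atom configurations as an \emph{unordered} multiset, $\binom{m+|\calL|-1}{m}\le(2t/\epsilon+m)^m/m!$, whose $1/m!$ cancels the $m^m$ growth, and (ii) covering the weight simplex with an $L_1$-net of size $2m(1+1/\epsilon)^{m-1}$, which has $\log$-size $\asymp m\log(1/\epsilon)$ with no $\log m$ factor. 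Combined with $m\gtrsim M$ (which the paper extracts from Theorem~\ref{thm:inapprox}, not from a scale assumption), this yields the bound $O(m\log(1/\epsilon))$ unconditionally for $\epsilon$ bounded away from $1$, matching the lemma's ``universal constant'' claim, whereas your version proves it only under the additional $M,\beta\lesssim\epsilon^{-c_1}$ hypothesis. You acknowledge this reliance, and it is indeed satisfied in the NPMLE application that motivates the lemma, so the gap is one of generality rather than correctness for the intended downstream use. If you want the full statement, replace your product grid by the multiset count for atoms and a simplex $L_1$-net for weights, and you recover the universal bound.
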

\begin{proof}
    First, we prove the result for $\calP=\Pbdd{M}$. 
    Let $m=\comp(\epsilon,\calP,\TV)$ and $t=M$. 
    Let $\calS_m\triangleq \{(p_1,\dots,p_m)|\sum_{i=1}^m p_i=1, p_i\geq 0\}$ and  $\calN_m\subseteq \calS_m$ be the smallest $\epsilon$-net of $\calS_m$ under the $L_1$-distance.
    Let $\calL\triangleq\{0,\pm\epsilon,\dots,\pm\floor{\frac{t}{\epsilon}}\epsilon\}$.
    Define the following set of finite mixture densities 
    $$\calC\triangleq\left\{\sum_{j=1}^m w_j N(\theta_j,1): (w_1,\dots,w_m)\in \calN_m, \theta_1 \le \dots \le \theta_m, \{\theta_j\}_{j=1}^m\subseteq \calL\right\}.$$
    The cardinality of $\calC$ is upper bounded by 
    \begin{equation*}
        |\calC|\leq \binom{m+|\calL|-1}{m}|\calN_m| \stepa{\leq}
    \frac{\pth{\frac{2t}{\epsilon}+m}^m}{m!}  2m\left(1+\frac{1}{\epsilon}\right)^{m-1} 
    \leq 2m\pth{\frac{2t}{\epsilon}+m}^m\pth{\frac{2e}{\epsilon m}}^m,
    \end{equation*}
    where (a) holds by $\binom{n}{m}\leq \frac{n^m}{m!}$ 
    and the fact that $|\calN_m| \leq 2m\left(1+\frac{1}{\epsilon}\right)^{m-1}$ \cite[Corollary 27.4]{PW-it}.

    Then, we prove the covering property of $\calC$.
    By definition, for any $P\in\calP$, there exists $P_m=\sum_{i=1}^m w_j\delta_{\theta_j}$ with $\theta_1 \le \dots \le \theta_m$ and $|\theta_j|\le t$ for all $j$ such that $\TV(f_{P_m},f_P)\leq \epsilon$. 
    Let $\theta_j^\prime=\theta_j\frac{\floor{|\theta_j|/\epsilon}}{|\theta_j|/\epsilon} \in \calL$ and consider $P_m'=\sum_{i=1}^m w_j\delta_{\theta_j'}$. 
    By Jensen's inequality and the fact that $\TV(N(0, 1), N(\epsilon, 1))  \leq \sqrt{\frac{1}{2\pi}}\epsilon$,
    we have $\TV(f_{P_m},f_{P_m^\prime})\leq  \sqrt{\frac{1}{2\pi}}\epsilon$. 
    Finally, we construct $P_m''=\sum_{j=1}^m w^\prime_j\delta_{\theta_j^\prime}\in\calC$ by choosing $w^\prime\in\calN$ so such $L_1(w,w^\prime)\le \epsilon$.
    By triangle inequality, $\TV(f_{P_m^{\prime}},f_{P_m^{\prime\prime}})\leq \frac{1}{2}L_1(w,w^\prime) \leq\frac{1}{2} \epsilon.$
    Hence, $\calC$ is a $2\epsilon$-net of $\calF_\calP$ under $\TV$, and thus 
     \begin{align}
        \label{eq:appr-entropy-cover-2}
     \log N(2\epsilon,\calF_\calP,\TV)
     \leq m\log\pth{\frac{4et}{\epsilon^2m}+\frac{2e}{\epsilon }}+\log 2m\stepa{\leq} O\pth{m \log\frac{1}{\epsilon}},
     \end{align}
     where (a) holds since $m \gtrsim M=t$ by Theorem~\ref{thm:lb-unif}.
    
    Finally, since $\|f_P^\prime\|_\infty\leq \|\phi^\prime\|_\infty =(2\pi e)^{-1/2}$, 
    $L_\infty(f_P,f_Q)\geq (\frac{8}{\pi e})^{1/4}\sqrt{\epsilon}$ implies $\TV(f_P,f_Q)\geq 2\epsilon$. 
    Then, 
    \begin{equation}
            \label{eq:appr-entropy-Linf}
            \log N(\pth{8/(\pi e)}^{1/4}\sqrt{\epsilon},\calF_\calP,L_\infty)\leq \log N(2\epsilon,\calF_\calP,\TV)\leq O\pth{m \log\frac{1}{\epsilon}},
    \end{equation}
    and equivalently, $\log \calN(\epsilon,\calF_\calP,L_\infty)\leq O\pth{m \log\frac{1}{\epsilon}}$.

    The proof for $\calP=\calP_\alpha(\beta)$ is analogous, where we need to choose $t$ depending on $m$ and $\beta$ as in the proof of Theorem~\ref{thm:ub--subW}. 
    Specifically, let
    $t=t(m,\beta)\asymp_\alpha m^{\frac{2}{2+\alpha}}\beta^{\frac{\alpha}{2+\alpha}}$ if $m^{\alpha-2}\lesssim_\alpha \beta^{2\alpha},$ and $t\asymp_\alpha\beta \pth{m\log \frac{m^{\alpha-2}}{\beta^{2\alpha}}}^\frac{1}{\alpha}$ otherwise. 
    Plugging $t=t(m,\beta)$ into  \eqref{eq:appr-entropy-cover-2} and noting that $m \gtrsim_\alpha \beta$ by Theorem~\ref{thm:inapprox}, we likewise obtain that
    $\log \calN(\epsilon,\calF_\calP,L_\infty)\lesssim_\alpha {m \log\frac{1}{\epsilon}}$.
\end{proof} 
\begin{proof}[Proof of Theorem~\ref{thm:rate-families}]
We first prove the result for $P^\star\in  \Pbdd{M}$. 
Let $\epsilon = n^{-2(c \vee 1)}$, $\delta=s\epsilon_n $. 
Theorem~\ref{thm:main-bdd} implies that $\epsilon_n^2 \asymp \frac{m^\star}{n} \log \frac{1}{\epsilon}$, where $m^\star\triangleq m^\star(\epsilon,\calP,\TV)$. 
Consider the smallest $\epsilon$-net denoted by $\calN$ of $$\calF \triangleq \{f_P:P\in\calP, H(f_P, f_{P^\star})\ge \delta \}\subseteq \calF_\calP$$ under the $L_\infty$-distance. Without loss of generality, assume $\calN\subseteq \calF$ \cite[Exercise~4.2.9]{HDP}. 
By Lemma~\ref{lmm:npmle-metric-entropy}, we have $H_\epsilon= \log |\calN|\lesssim m^\star \log\frac{1}{\epsilon}$.
It follows from the definition of the $\epsilon$-net that, if $H(f_{\hat P}, f_{P^\star}) \ge \delta$, then there exists $g\in\calN$ such that $f_{\hat P}(x)\le g(x)+\epsilon$ for all $x\in\reals$.
By the optimality of $\hat P$, 
\begin{align}
     0&\le \sum_{i=1}^n \log \frac{f_{\hat P}(X_i)}{f_{P^\star}(X_i)}    
    \le \max_{g\in\calN}\sum_{i=1}^n \log \frac{g+\epsilon }{f_{P^\star}}(X_i). 
    \label{eq:npmle-decompose}
\end{align} 
For a fixed function $g\in\calN$, applying Chernoff bound yields that 
\[
\prob{\sum_{i=1}^n \log \frac{g+\epsilon }{f_{P^\star}}(X_i)\ge 0}
\le \exp\pth{  n \log \Expect\sqrt{\frac{g+\epsilon }{f_{P^\star}}} },
\]
where
\begin{equation}
\Expect\sqrt{\frac{g+\epsilon }{f_{P^\star}}}
\leq 
\Expect\sqrt{\frac{g }{f_{P^\star}}}
+ \sqrt{\epsilon} \Expect\sqrt{\frac{1 }{f_{P^\star}}}
= 1-\frac{1}{2}H^2(g,f_{P^\star}) + 
\sqrt{\epsilon}
\int_{\reals} \sqrt{f_{P^\star}(x)} \diff x.
    \label{eq:sqrtintegral}
\end{equation}
Since $g\in\calN$, we have  $H(g,f_{P^\star})\ge \delta$.
Additionally, applying Cauchy-Schwarz inequality yields  $\Expect\sqrt{1/f_{P^\star}}=(\int_{|x|\le 2M}+\int_{|x|> 2M}) \sqrt{f_{P^\star}(x)} \diff x \leq c_0(\sqrt{M}\vee 1) $ for some universal constant $c_0$.
Then, we have $\Expect\sqrt{(g+\epsilon)/f_{P^\star}} \le 1 - \frac{\delta^2}{2} +  c_0\sqrt{\epsilon}(\sqrt{M}\vee 1)$.
Also note that $n\epsilon_n^2\asymp m^\star \log \frac{1}{\epsilon}\gtrsim M\vee 1\geq \sqrt{M}\vee 1$ according to Theorem~\ref{thm:lb-unif}.
Hence, by the union bound, there exist absolute constants $c_1,s^\star>0$ such that for any $s>s^\star$,
\begin{align}
    \prob{\max_{g\in\calN}\sum_{i=1}^n \log \frac{g+\epsilon }{f_{P^\star}}(X_i) \ge 0 }
    & \le \exp\pth{ -n \pth{\frac{\delta^2}{2} - c_0\sqrt{\epsilon}(\sqrt{M}\vee 1)}  + H_\epsilon} \nonumber\\
    & \le \exp\pth{- c_1 s^2 m^\star \log n}. \label{eq:npmle-1}
\end{align}
Consequently, we conclude~\eqref{eq:npmle-rate}.

The proof for $P^\star \in \calP_\alpha(\beta)$ follows from a similar argument.
By using $\epsilon_n$ specified in~\eqref{eq:rate-tail}, the condition $\epsilon_n^2 \gtrsim \frac{m^\star}{n} \log \frac{1}{\epsilon}$ is satisfied by Theorem~\ref{thm:main-tail}. 
To bound the integral $\int_{\reals}\sqrt{f_P(x)} \diff x$, 
note that  $\int_{|x|\le \beta}\sqrt{f_P(x)} \diff x \leq \sqrt{2\beta}$ and 
\begin{align*}
    \int_{|x|>  \beta} \sqrt{f_P(x)} \diff x 
    &\leq \int_{|x|> \beta} \sqrt{\Expect \phi(x-\theta)\pth{\indc {|x-\theta|\leq |x|/2}+\indc{|x-\theta|> |x|/2 }}} \diff x \nonumber\\
    &\leq \int_{|x|> \beta}  \sqrt{ \phi(0)  \pbb[|\theta|\geq |x|/2]}  +\sqrt{\phi(x/2)}\diff x
    \leq c_\alpha(\beta\vee 1),
\end{align*}
where the last inequality holds by the tail probability bound~\eqref{eq:subweibull-tail} for some $c_\alpha>0$ that depends on $\alpha$. Then, Theorem~\ref{thm:inapprox} implies that $\comp\log\frac{1}{\epsilon} \gtrsim \beta \vee 1 \gtrsim_\alpha  \int \sqrt{f_P(x)} \diff x$.
Following the similar derivation, \eqref{eq:npmle-1} holds for some $c_1>0$ and $s>s^\star$, where $c_1, s^\star$ may depend on $\alpha$. 
\end{proof}

\begin{remark}
The preceding proof is simpler than existing arguments 
(e.g., the proofs of \cite[Theorem 1]{Zhang08} and its  multivariate extension \cite[Theorem 2.1]{Saha19}), which rely on truncated $L_\infty([-M,M])$ metric entropy of the mixture density. As such, in \eqref{eq:npmle-decompose} one needs to take into account the contribution from those $|X_i| \geq M$.
In comparison, we directly apply the global $L_\infty$-entropy in 
Lemma \ref{lmm:npmle-metric-entropy}, which avoids truncating the sample, and directly bound the contribution of $\int \sqrt{f_{P^\star}}$ in \eqref{eq:sqrtintegral} which can be afforded since the mixing distributions here are light-tailed.    
\end{remark}
\bibliographystyle{alpha}
\bibliography{sample}

\end{document}